\documentclass[hidelinks,onefignum,onetabnum]{siamart251216}

\usepackage{tikz}
\usepackage{booktabs}
\usepackage{amsfonts}
\usepackage{amssymb}
\usepackage{graphicx}
\usepackage{epstopdf}
\usepackage{algorithmic}
\ifpdf
  \DeclareGraphicsExtensions{.eps,.pdf,.png,.jpg}
\else
  \DeclareGraphicsExtensions{.eps}
\fi

\theoremstyle{plain}
\newtheorem{example}[theorem]{Example}

\newsiamremark{remark}{Remark}
\newsiamremark{hypothesis}{Hypothesis}
\crefname{hypothesis}{Hypothesis}{Hypotheses}
\newsiamthm{claim}{Claim}
\newsiamremark{fact}{Fact}
\crefname{fact}{Fact}{Facts}

\headers{Uniform Coefficientwise Backward Stability}{J. P\'erez}

\title{Scaled Companion Linearizations for Coefficientwise Backward Stability\thanks{Submitted to the editors DATE.
\funding{This work was funded by the Fog Research Institute under contract no.~FRI-454.}}}

\author{Javier P\'erez \thanks{Imagination Corp., Chicago, IL 
  (\email{ddoe@imag.com}, \url{http://www.imag.com/\string~ddoe/}).}
}

\usepackage{amsopn}

\ifpdf
\hypersetup{
  pdftitle={Scaled companion linearizations for coefficientwise backward stability},
  pdfauthor={Javier P\'erez}
}
\fi

\begin{document}

\maketitle

\begin{abstract}
A central goal in solving a polynomial eigenvalue problem
is to compute eigenpairs that are exact for a nearby
polynomial, with each coefficient perturbation bounded
by a modest multiple of unit roundoff times that
coefficient's norm. A backward stable eigensolver for
a linearization does not automatically provide this
coefficientwise relative guarantee.
Scaling can improve accuracy, but a single
scaling may not work across a widely spread spectrum.

We construct a family of scaled companion linearizations that covers the nonzero complex plane. 
Each linearization is paired with a region and an eigenvector recovery rule for which the backward-error amplification factor is bounded at exact pencil eigenvectors, independently of coefficient norm disparities.
Tropical roots guide the construction, and their multiplicities reduce the number of required solves. 
These results provide a basis for coefficientwise relative backward stability across the spectrum.
\end{abstract}

\begin{keywords}
polynomial eigenvalue problem, companion linearization,
tropical scaling, coefficientwise relative backward error,
eigenvector recovery
\end{keywords}

\begin{MSCcodes}
65F15, 65F35, 65G50
\end{MSCcodes}

\section{Introduction}

Let
\begin{equation}
\label{eq:matrix-polynomial}
    P(\lambda)=\sum_{k=0}^{d}P_k\lambda^k,
    \qquad P_k\in\mathbb C^{n\times n},
\end{equation}
be a regular matrix polynomial, meaning that
$\det P(\lambda)$ is not identically zero.
The polynomial eigenvalue problem is to find
$\lambda\in\mathbb C$ and nonzero vectors
$x,y\in\mathbb C^n$ such that
\[
    P(\lambda)x=0,
    \qquad
    y^*P(\lambda)=0.
\]
We call $\lambda$ an eigenvalue and $x$ and $y$
corresponding right and left eigenvectors.
The pairs $(\lambda,x)$ and $(\lambda,y)$ are right
and left eigenpairs. When there is no risk of confusion,
we omit ``right.''

What should we expect from a numerical method?
Ideally, each computed eigenpair should be exact for
a nearby polynomial
\[
    P+\Delta P
    =\sum_{k=0}^{d}(P_k+\Delta P_k)\lambda^k,
\]
with
\[
    \|\Delta P_k\|_2\leq C u\|P_k\|_2,
    \qquad 0\leq k\leq d,
\]
where $u$ is unit roundoff and $C$ is a modest constant.
This measures each matrix coefficient perturbation
relative to that coefficient's norm.
In particular, a zero coefficient must remain zero.
We refer to this requirement as coefficientwise
relative backward stability.
The perturbations may differ between eigenpairs;
we do not require one nearby polynomial for the
complete computed spectrum.

A standard method replaces the polynomial problem
by a generalized eigenvalue problem
\[
    L(\lambda)v=0,
    \qquad L(\lambda)=\lambda X+Y,
\]
where $L$ is a larger matrix pencil called a
linearization of $P$.
We solve the pencil problem with the QZ algorithm
and recover polynomial eigenvectors from the pencil
eigenvectors.

A linearization preserves the finite eigenvalues
and their elementary divisors.
A strong linearization also preserves those at infinity.
Companion pencils are standard examples.
Other families include the ansatz-space linearizations
of Mackey et al.~\cite{MackeyEtAl2006} and the block
Kronecker pencils of Dopico et al.~\cite{DopicoEtAl2018}.

The QZ algorithm is backward stable for the pencil
\cite{MolerStewart1973}.
However, small perturbations to the pencil coefficients
need not correspond to small relative perturbations
to every polynomial coefficient.
A computed pencil eigenpair can therefore have a small
backward error while its recovered polynomial eigenpair
has a large one.
Tisseur~\cite{Tisseur2000} and Higham, Li, and
Tisseur~\cite{HighamLiTisseur2007} study this transfer.
Their analysis shows the importance of the linearization,
the coefficient norms, the eigenvalue, and the
eigenvector recovery rule.

Scaling can reduce this amplification.
Tropical scaling chooses scales from the roots of
the scalar tropical polynomial
\[
    T_P(r)=\max_{0\leq k\leq d}\|P_k\|_2r^k
\]
\cite{GaubertSharify2009,NoferiniSharifyTisseur2015}.
These roots mark the radii at which the largest
coefficient term changes.
Van Barel and Tisseur~\cite{VanBarelTisseur2018}
also use tropical scaling to construct a scaled
Lagrange linearization.

Scaling at a tropical root is particularly useful
for eigenvalues whose moduli lie near that root
\cite[Section~4]{NoferiniSharifyTisseur2015}.
But how close are the eigenvalues to the tropical roots?
Noferini, Sharify, and Tisseur give localization bounds
that depend on root separation and the conditioning
of the relevant coefficient matrices
\cite[Theorem~2.7 and Corollary~2.8]{NoferiniSharifyTisseur2015}.
Under suitable conditions, these bounds place groups
of eigenvalues near the tropical roots.
Without those conditions, eigenvalues can lie far
from every tropical root. Scaling at the roots alone
may then fail to give small backward errors across
the spectrum.

The quadratic case suggests another approach:
use different linearizations for different eigenvalue
moduli.
The algorithm \texttt{quadeig} of Hammarling, Munro,
and Tisseur~\cite{HammarlingMunroTisseur2013}
combines scaling, linearization, and eigenvector recovery.
Its analysis gives backward stability guarantees
for quadratics that are not too heavily damped.
For heavily damped problems, Zeng and
Su~\cite{ZengSu2014} use two linearizations of a
scaled polynomial, one for eigenvalues with modulus
at most one and another for those with modulus
at least one.
Their analysis bounds the transfer of backward errors
on the two regions.

We extend this regional approach to arbitrary
degree $d\geq2$.
Assume that $\|P_0\|_2,\|P_d\|_2>0$, and let
\[
    \alpha_1<\cdots<\alpha_s
\]
be the distinct tropical roots.
When $s\geq2$, we construct two scaled companion
strong linearizations for each interval between
consecutive roots.
We scale at the geometric mean
$\sqrt{\alpha_i\alpha_{i+1}}$, use a direct construction
on the upper half of the interval, and use reversal
on the lower half.

The first lower-half pencil also handles moduli below
$\alpha_1$, and the last upper-half pencil handles
moduli above $\alpha_s$.
Suitable right recovery blocks give these extensions
without additional pencils or polynomial scalings.
The resulting $2(s-1)$ pencils cover the nonzero
complex plane. If $s=1$, one scaled companion pencil
suffices.
Since $s\leq d$, the worst-case count is $2(d-1)$.
Tropical multiplicities reduce the count by reducing
the number of intervals.

Our analysis separates backward-error amplification
into two factors.
The residual amplification factor depends on the
eigenvalue and the chosen one-sided factorization.
The recovery norm ratio compares the full pencil
vector with its selected block.
On each assigned region, we bound the right and left
residual amplification factors by constants depending
only on $d$.
We also bound the recovery norm ratios for every
exact pencil eigenvector.
Together, these estimates bound the full backward-error
amplification factors by constants of order $d^{5/2}$
at exact eigenvectors.

The bounds require neither invertible coefficient
matrices nor separated tropical roots.
They apply throughout the assigned regions, including
their boundaries and values far from the tropical roots.
For computed vectors, the residual transfer identities
still hold, with their actual recovery norm ratios.
Thus, small pencil backward errors give small
coefficientwise relative polynomial backward errors
when those ratios remain bounded.
The exact-eigenvector estimates do not by themselves
establish this condition for computed vectors.

The construction does not yet give a complete
backward stable eigensolver.
Besides controlling computed recovery ratios, such
an algorithm must combine the pencil outputs while
preserving eigenvalue multiplicities.
Near a region boundary, independent approximations
can fall on opposite sides, so selection by modulus
alone can omit or duplicate eigenvalues.
Zeng and Su discuss this issue and propose a heuristic
extraction strategy for their two-linearization method
\cite[Section~3.5]{ZengSu2014}.
Our bounds remain valid at the boundaries, but this
does not resolve the selection problem.

Solving the full covering also has a cost:
up to $2(s-1)$ generalized eigenvalue solves of order
$dn$ when $s\geq2$.
The purpose of the present construction is to establish
uniform amplification bounds across all finite nonzero
eigenvalue moduli.
The numerical examples compare its behavior with
unscaled companion linearization and tropical scaling.
Control of computed recovery ratios and complete-spectrum
selection remain separate questions.

\section{Preliminaries}

\subsection{Strong linearizations}
\label{sec:strong-linearizations}

Let $P(\lambda)$ be a regular matrix polynomial of degree $d\geq1$, as in \eqref{eq:matrix-polynomial}.
Linearization replaces its polynomial eigenvalue problem by a larger generalized eigenvalue problem.

\begin{definition}
\label{def:strong-linearization}
A $dn\times dn$ pencil $L(\lambda)=\lambda X+Y$
is a \emph{linearization} of $P$ if
\[
    U(\lambda)L(\lambda)V(\lambda)
    =\operatorname{diag}\bigl(P(\lambda),I_{(d-1)n}\bigr)
\]
for some unimodular matrix polynomials $U$ and $V$,
that is, matrix polynomials whose determinants are
nonzero constants.

A linearization $L(\lambda)$ is \emph{strong} if
$\operatorname{rev}_1L$ also linearizes
$\operatorname{rev}_dP$, where
\[
    \operatorname{rev}_1L(\lambda)=X+\lambda Y,
    \qquad
    \operatorname{rev}_dP(\lambda)
    =\sum_{k=0}^{d}P_{d-k}\lambda^k.
\]
\end{definition}

A linearization preserves the finite elementary divisors
of $P$, and a strong linearization also preserves its
infinite elementary divisors. 
Thus, it preserves the eigenvalues and their partial multiplicities \cite{MackeyEtAl2006,DopicoEtAl2018}.

The classical example we use in this paper is the first
Frobenius companion pencil, or simply the companion pencil, 
\begin{equation}
\label{eq:first-companion-pencil}
    C_P(\lambda)
    =
    \lambda
    \begin{bmatrix}
        P_d&&&\\
        &I_n&&\\
        &&\ddots&\\
        &&&I_n
    \end{bmatrix}
    +
    \begin{bmatrix}
        P_{d-1}&P_{d-2}&\cdots&P_0\\
        -I_n&0&\cdots&0\\
        &\ddots&\ddots&\vdots\\
        0&&-I_n&0
    \end{bmatrix}.
\end{equation}
The companion pencil is a strong linearization of $P$.

For every finite $\lambda$, a nonzero vector $z$ satisfies $C_P(\lambda)z=0$ if and only if
\[
    z=
    \begin{bmatrix}
        \lambda^{d-1}x\\
        \lambda^{d-2}x\\
        \vdots\\
        x
    \end{bmatrix}
\]
for some nonzero vector $x$ satisfying $P(\lambda)x=0$.
Indeed, the lower block equations determine all blocks from the last one, and the first block equation then reduces to $P(\lambda)x=0$.

When $\lambda\neq0$, every block of $z$ is a nonzero scalar multiple of $x$, so any block can recover a polynomial eigenvector. 
This flexibility is important for backward stability: for computed pencil vectors, the choice of recovery block can strongly affect the backward error of the recovered polynomial eigenpair.

Several families generalize the companion construction, including strong linearizations from the ansatz spaces $\mathbb L_1(P)$, $\mathbb L_2(P)$, and their intersection $\mathbb{DL}(P)$ \cite{MackeyEtAl2006,NakatsukasaEtAl2017},
Fiedler companion pencils \cite{DeTeranEtAl2010}, and block Kronecker pencils \cite{DopicoEtAl2018}.
The block minimal bases framework unifies many Fiedler-like constructions \cite{BuenoEtAl2018}.
Extensions include linearizations in other polynomial
bases and higher-degree $\ell$-ifications \cite{DopicoPerezVanDooren2019}.

These families include linearizations that preserve algebraic structures of $P$, such as symmetry, skew-symmetry, or palindromic structure.
For our constructions and backward-error bounds, the Frobenius companion pencil and simple variations of it suffice.

\subsection{One-sided factorizations and eigenvector recovery}
\label{subsec:factorizations-recovery}

After solving the linearized problem, we need to recover
eigenvectors of $P$ from those of $L$. One-sided
factorizations provide a direct way to do this.
Higham, Li, and Tisseur \cite{HighamLiTisseur2007}
use these factorizations to describe eigenvector
recovery and relate polynomial and pencil residuals.
We present the form needed for our constructions,
where recovery consists of selecting a block.

Let $L(\lambda)$ be a $dn\times dn$ linearization of $P$.
Partition its right and left vectors into $d$ blocks
of size $n$. Choose matrices
\[
    E_{\rm R}\in\mathbb C^{n\times dn},
    \qquad
    E_{\rm L}\in\mathbb C^{dn\times n},
\]
so that $E_{\rm R}z$ selects the right recovery block
and $E_{\rm L}^*w$ selects the left recovery block.
Each matrix consists of one identity block and
zero blocks elsewhere.
 Each consists of one identity
block and zero blocks elsewhere, so
\[
    E_{\rm R}E_{\rm R}^*=I_n,
    \qquad
    E_{\rm L}^*E_{\rm L}=I_n.
\]
For pencil vectors $z$ and $w$, the selected vectors are
\begin{equation}
\label{eq:block-vector-recovery}
    x=E_{\rm R}z,
    \qquad
    y=E_{\rm L}^*w.
\end{equation}
We may select different blocks for right and left
eigenvector recovery.

To relate these vectors to $P$, suppose that, on a
region $\Omega\subseteq\mathbb C$, matrix functions $H_{\rm R}(\lambda)\in\mathbb C^{n\times dn}$, $H_{\rm L}(\lambda)\in\mathbb C^{dn\times n}$  satisfy
\begin{equation}
\label{eq:residual-transfer-identities}
    H_{\rm R}(\lambda)L(\lambda)
    =P(\lambda)E_{\rm R},
    \qquad
    L(\lambda)H_{\rm L}(\lambda)
    =E_{\rm L}P(\lambda).
\end{equation}
We call these one-sided factorizations because each
identity multiplies $L$ on only one side.
The first supports right eigenvector recovery;
the second supports left eigenvector recovery.

\begin{proposition}
\label{prop:one-sided-eigenvector-recovery}
Let $\lambda\in\Omega$.
If $L(\lambda)z=0$ and $x=E_{\rm R}z\neq0$, then
$P(\lambda)x=0$.
If $w^*L(\lambda)=0$ and $y=E_{\rm L}^*w\neq0$, then
$y^*P(\lambda)=0$.
\end{proposition}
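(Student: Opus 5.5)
The plan is to apply the one-sided factorizations \eqref{eq:residual-transfer-identities} directly to the pencil eigenvectors. The argument is a one-line computation for each side. The only care needed is to use the hypotheses of the proposition: that $\lambda\in\Omega$, so the identities hold at this point, and that the recovered vectors are nonzero, so they qualify as eigenvectors.

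For the right eigenvector, I will multiply the equation $L(\lambda)z=0$ on the left by $H_{\rm R}(\lambda)$. The first identity in \eqref{eq:residual-transfer-identities} gives
\[
    0=H_{\rm R}(\lambda)L(\lambda)z=P(\lambda)E_{\rm R}z=P(\lambda)x .
\]
Since $x=E_{\rm R}z\neq0$ by assumption, $x$ is a right eigenvector of $P$ for $\lambda$.

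For the left eigenvector, I will multiply $w^*L(\lambda)=0$ on the right by $H_{\rm L}(\lambda)$. The second identity gives
\[
    0=w^*L(\lambda)H_{\rm L}(\lambda)=w^*E_{\rm L}P(\lambda)=(E_{\rm L}^*w)^*P(\lambda)=y^*P(\lambda).
\]
Again $y\neq0$ by hypothesis, so $y$ is a left eigenvector.

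There is no real obstacle in this argument. The one point to state explicitly is that nonvanishing of the selected block is assumed rather than derived. Indeed, $H_{\rm R}$ and $H_{\rm L}$ need not be injective or surjective, so the identities alone cannot guarantee $E_{\rm R}z\neq0$. That guarantee will come later, from the specific structure of the pencils built in the paper; for the companion pencil it follows from the block form $z=[\lambda^{d-1}x;\dots;x]$ with $\lambda\neq0$.
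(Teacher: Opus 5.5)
Your proposal is correct and is essentially the paper's proof: multiply $L(\lambda)z=0$ on the left by $H_{\rm R}(\lambda)$ and $w^*L(\lambda)=0$ on the right by $H_{\rm L}(\lambda)$, then apply the one-sided factorizations. Your added remark is also consistent with the paper: nonvanishing of the selected block is a hypothesis here, and the paper verifies it separately for each construction.
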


\begin{proof}
Multiplying the first factorization by $z$ gives
\[
    P(\lambda)x
    =H_{\rm R}(\lambda)L(\lambda)z=0.
\]
Multiplying the second by $w^*$ gives
\[
    y^*P(\lambda)
    =w^*L(\lambda)H_{\rm L}(\lambda)=0.
\]
\end{proof}

The selected vector must be nonzero to be an eigenvector.
For each construction, we therefore verify that the
chosen block cannot vanish for the exact pencil
eigenvectors in its assigned region.

The same factorizations also help when the pencil
vectors are computed rather than exact. For arbitrary
vectors $z,w$, they give
\begin{equation}
\label{eq:recovered-polynomial-residuals}
    P(\lambda)x
    =H_{\rm R}(\lambda)L(\lambda)z,
    \qquad
    y^*P(\lambda)
    =w^*L(\lambda)H_{\rm L}(\lambda).
\end{equation}
Thus, the polynomial residuals satisfy
\begin{align*}
    \|P(\lambda)x\|_2
    &\leq
    \|H_{\rm R}(\lambda)\|_2\|L(\lambda)z\|_2, \qquad \text{and} \\
    \|y^*P(\lambda)\|_2
    &\leq 
    \|w^*L(\lambda)\|_2\|H_{\rm L}(\lambda)\|_2.
\end{align*}
These inequalities connect eigenvector recovery to backward-error analysis.
In the following sections,
we use them to bound the backward errors of the recovered polynomial eigenpairs.

Our constructions also use one-sided factorizations with scalar factors:
\[
    G(\lambda)L(\lambda)
    =h_{\rm R}(\lambda)P(\lambda)E_{\rm R},
    \qquad
    L(\lambda)F(\lambda)
    =h_{\rm L}(\lambda)E_{\rm L}P(\lambda),
\]
where $h_{\rm R}$ and $h_{\rm L}$ are nonzero on $\Omega$.
Dividing by these factors gives the previous identities with
\[
    H_{\rm R}(\lambda)=\frac{G(\lambda)}{h_{\rm R}(\lambda)},
    \qquad
    H_{\rm L}(\lambda)=\frac{F(\lambda)}{h_{\rm L}(\lambda)}.
\]
Thus, the same block selection rules recover polynomial
eigenvectors from exact pencil eigenvectors, provided
the selected blocks are nonzero.

For computed vectors, the residual bounds become
\begin{align*}
    \|P(\lambda)x\|_2
    \leq&
    \frac{\|G(\lambda)\|_2}{|h_{\rm R}(\lambda)|}
    \|L(\lambda)z\|_2, \qquad \text{and}\\
    \|y^*P(\lambda)\|_2
    \leq&
    \frac{\|F(\lambda)\|_2}{|h_{\rm L}(\lambda)|}
    \|w^*L(\lambda)\|_2.
\end{align*}
The scalar factors therefore leave exact eigenvector
recovery unchanged, but enter the residual bounds.

\subsection{Backward errors of computed eigenpairs}
\label{sec:backward-errors}

For a regular matrix polynomial $P$ as in
\eqref{eq:matrix-polynomial}, a computed eigenpair
$(\lambda,x)$ generally has a nonzero residual $r=P(\lambda)x$.
Its backward error measures the smallest perturbation to the coefficients that makes this pair exact.
We use the normwise definitions of Tisseur \cite{Tisseur2000}; see also \cite{HighamLiTisseur2007}.
We use the same notation for exact and computed eigenpairs, distinguishing them in the text when needed.

\begin{definition}
\label{def:weighted-backward-error}
Let $w_0,\ldots,w_d$ be nonnegative weights.
The weighted normwise backward error of a computed eigenpair
$(\lambda,x)$, with $x\neq0$, is
\begin{equation}
\label{eq:weighted-backward-error}
    \eta_{P,w}(\lambda,x)
    =
    \inf\left\{
    \varepsilon\geq0:
    \begin{array}{l}
    (P(\lambda)+\Delta P(\lambda))x=0,\\[1mm]
    \|\Delta P_k\|_2\leq\varepsilon w_k,
        \quad k=0,\ldots,d
    \end{array}
    \right\},
\end{equation}
where
\[
    \Delta P(\lambda)=\sum_{k=0}^{d}\Delta P_k\lambda^k,
    \qquad
    \Delta P_k\in\mathbb C^{n\times n}.
\]
The weights determine how we measure the coefficient
perturbations.
If $w_k=0$, the coefficient $P_k$ cannot change.
\end{definition}

The backward error has an explicit formula in terms
of the residual.
Set
\[
    s_w(\lambda)=\sum_{k=0}^{d}w_k|\lambda|^k,
\]
with the convention $|\lambda|^0=1$, also at $\lambda=0$.

\begin{proposition}
\label{prop:backward-error-residual}
For $x\neq0$ and $s_w(\lambda)>0$, the weighted normwise
backward error satisfies
\begin{equation}
\label{eq:weighted-residual-formula}
    \eta_{P,w}(\lambda,x)
    =
    \frac{\|P(\lambda)x\|_2}
         {s_w(\lambda)\|x\|_2}.
\end{equation}
\end{proposition}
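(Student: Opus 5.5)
The identity follows from two inequalities: a lower bound valid for every admissible perturbation, and an explicit perturbation that attains it. Write $r=P(\lambda)x$.

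\emph{Lower bound.} Suppose $\varepsilon\geq0$ and $\Delta P_0,\ldots,\Delta P_d$ satisfy $(P(\lambda)+\Delta P(\lambda))x=0$ and $\|\Delta P_k\|_2\leq\varepsilon w_k$. Then $r=-\Delta P(\lambda)x$. By the triangle inequality,
\[
    \|r\|_2\leq\sum_{k=0}^{d}\|\Delta P_k\|_2|\lambda|^k\|x\|_2
    \leq\varepsilon\, s_w(\lambda)\|x\|_2 .
\]
Here the convention $|\lambda|^0=1$ keeps the $k=0$ term correct at $\lambda=0$. Since $s_w(\lambda)>0$ and $x\neq0$, every admissible $\varepsilon$ satisfies $\varepsilon\geq\|r\|_2/(s_w(\lambda)\|x\|_2)$. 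Taking the infimum gives $\eta_{P,w}(\lambda,x)\geq\|r\|_2/(s_w(\lambda)\|x\|_2)$.

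\emph{Attainment.} Set $\varepsilon_*=\|r\|_2/(s_w(\lambda)\|x\|_2)$ and build a rank-one perturbation. Choose unimodular scalars $\theta_k$ with $\theta_k\lambda^k=|\lambda|^k$. Take $\theta_k=\overline{\lambda}^k/|\lambda|^k$ when $\lambda\neq0$. When $\lambda=0$, take $\theta_k=1$; then only the $k=0$ term contributes, and that is consistent with the convention. Define
\[
    \Delta P_k=-\frac{w_k\theta_k}{s_w(\lambda)}\,\frac{r\,x^*}{\|x\|_2^2},
    \qquad k=0,\ldots,d.
\]
Then $\Delta P(\lambda)x=-\bigl(\sum_k w_k|\lambda|^k/s_w(\lambda)\bigr)r=-r$, so $(P(\lambda)+\Delta P(\lambda))x=0$. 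Moreover, $\|\Delta P_k\|_2=w_k\|r\|_2/(s_w(\lambda)\|x\|_2)=\varepsilon_* w_k$, because the rank-one matrix $rx^*$ has spectral norm $\|r\|_2\|x\|_2$. Every $k$ with $w_k=0$ gets $\Delta P_k=0$, as the definition requires. Hence $\varepsilon_*$ is admissible, and together with the lower bound the infimum equals $\varepsilon_*$.

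The only delicate step is the choice of phases $\theta_k$ together with the $\lambda=0$ case. The phases must be chosen so that the terms add coherently to exactly $s_w(\lambda)$, with no cancellation. The condition $s_w(\lambda)>0$ guarantees the normalization is well defined. If $r=0$, the construction gives $\Delta P_k=0$ and $\eta=0$, which is consistent with the formula.
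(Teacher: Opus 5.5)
Your proof is correct and follows the standard argument of \cite{Tisseur2000}, which the paper cites for this formula instead of proving it: a triangle-inequality lower bound, plus a phase-aligned rank-one perturbation $\Delta P_k=-\frac{w_k\theta_k}{s_w(\lambda)}\frac{rx^*}{\|x\|_2^2}$ that attains it. Your handling of $\lambda=0$ through the convention $|\lambda|^0=1$, and of the weights $w_k=0$, is also correct.
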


If $s_w(\lambda)=0$, no allowed coefficient perturbation
can change $P(\lambda)$.
Thus, the backward error is zero if $P(\lambda)x=0$ and $+\infty$ otherwise.
This case occurs only when all weights are zero or when $\lambda=0$ and $w_0=0$.

Throughout the paper, we take
\[
    w_k=\|P_k\|_2
\]
and denote the resulting backward error by $\eta_P$:
\begin{equation}
\label{eq:polynomial-backward-error}
    \eta_P(\lambda,x)
    =
    \frac{\|P(\lambda)x\|_2}
    {\|x\|_2
     \displaystyle\sum_{k=0}^{d}\|P_k\|_2|\lambda|^k}.
\end{equation}
With these weights, the perturbation bounds become
\[
    \|\Delta P_k\|_2\leq\varepsilon\|P_k\|_2.
\]
Thus, we measure each perturbation relative to the
corresponding coefficient norm.
A zero coefficient matrix must remain zero.

A small backward error means that the computed eigenpair is exact for a polynomial with small perturbations relative to each coefficient's norm.

For a computed left eigenpair $(\lambda,y)$, with $y\neq0$,
the analogous backward error is
\begin{equation}
\label{eq:left-polynomial-backward-error}
    \eta_P^{\mathrm{left}}(\lambda,y)
    =
    \frac{\|y^*P(\lambda)\|_2}
    {\|y\|_2
     \displaystyle\sum_{k=0}^{d}\|P_k\|_2|\lambda|^k}.
\end{equation}

The right and left backward-error definitions allow different coefficient perturbations to make each pair exact. 
The backward error of a computed eigentriplet $(\lambda,x,y)$ requires one set of coefficient perturbations to make both pairs exact, under the same relative perturbation bounds. 
Tisseur
\cite[Theorem~4]{Tisseur2000} proves that
\[
    \eta_P^{\mathrm{triplet}}(\lambda,x,y)
    =
    \max\bigl\{
        \eta_P(\lambda,x),
        \eta_P^{\mathrm{left}}(\lambda,y)
    \bigr\}.
\]
Thus, making both pairs exact requires no larger
perturbation bound than the larger of their individual
backward errors.

\subsection{Scaling and reversal}
\label{subsec:scaling-reversal}

Scaling and reversal are our main tools for constructing the linearizations used in this paper.
We introduce these operations and show that both preserve the relative backward errors of corresponding eigenpairs.

Let
\[
    P(\lambda)=\sum_{k=0}^{d}P_k\lambda^k,
    \qquad
    S_P(r)=\sum_{k=0}^{d}\|P_k\|_2r^k.
\]
We scale $P$ by changing the variable and multiplying
by a nonzero scalar. For $\gamma>0$ and $\delta\neq0$,
the scaled polynomial is
\[
    Q(t)=\delta P(\gamma t)
    =\sum_{k=0}^{d}\delta\gamma^kP_kt^k.
\]
Reversal changes the order of the coefficients.
The reversal of $P$ with respect to degree $d$ is
\[
    R(t)=\operatorname{rev}_dP(t)
    =\sum_{k=0}^{d}P_{d-k}t^k.
\]
For $t\neq0$, this gives $R(t)=t^dP(t^{-1})$.

These operations give a direct correspondence between
eigenpairs. 
A right or left eigenpair $(t,v)$ of the scaled polynomial
$Q$ corresponds to $(\gamma t,v)$ of $P$.
Similarly, an eigenpair $(t,v)$ of the reversed polynomial
$R$, with $t\neq0$, corresponds to $(t^{-1},v)$ of $P$.
Thus, scaling and reversal change the eigenvalue but leave the corresponding eigenvectors unchanged.

These operations also preserve the relative backward errors of corresponding computed pairs, as the following lemma shows.

\begin{lemma}
\label{lem:scaling-reversal-backward-error}
Let $P(\lambda)=\sum_{k=0}^{d}P_k\lambda^k$ be a regular
matrix polynomial.
For $\gamma>0$ and
$\delta\in\mathbb C\setminus\{0\}$, define
\[
    Q(t)=\delta P(\gamma t),
    \qquad
    R(t)=\operatorname{rev}_dP(t)
        =\sum_{k=0}^{d}P_{d-k}t^k.
\]
For any matrix polynomial $F(t)=\sum_{k=0}^{d}F_kt^k$,
write
\[
    S_F(r)=\sum_{k=0}^{d}\|F_k\|_2r^k,
    \qquad r\geq0,
\]
with $r^0=1$, including at $r=0$.
Then
\begin{align}
    S_Q(r)&=|\delta|S_P(\gamma r),
    &&r\geq0,
    \label{eq:scaled-polynomial-weight}\\
    S_R(r)&=r^dS_P(r^{-1}),
    &&r>0.
    \label{eq:reversed-polynomial-weight}
\end{align}
Let $\eta_F^{\rm R}$ and $\eta_F^{\rm L}$ denote the
right and left normwise backward errors with weights
$\|F_k\|_2$.
For nonzero vectors $x,y\in\mathbb C^n$,
\begin{align}
    \eta_Q^{\rm R}(x,t)
    &=\eta_P^{\rm R}(x,\gamma t),
    &
    \eta_Q^{\rm L}(y,t)
    &=\eta_P^{\rm L}(y,\gamma t),
    &&t\in\mathbb C,
    \label{eq:scaled-polynomial-backward-error}\\
    \eta_R^{\rm R}(x,t)
    &=\eta_P^{\rm R}(x,t^{-1}),
    &
    \eta_R^{\rm L}(y,t)
    &=\eta_P^{\rm L}(y,t^{-1}),
    &&t\neq0.
    \label{eq:reversed-polynomial-backward-error}
\end{align}
\end{lemma}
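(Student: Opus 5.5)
The plan is to prove the two weight identities first, directly from the coefficients, and then obtain the backward-error identities by substituting them into the residual formula of \cref{prop:backward-error-residual}.

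For \eqref{eq:scaled-polynomial-weight}, the coefficients of $Q$ are $Q_k=\delta\gamma^kP_k$. Hence $\|Q_k\|_2=|\delta|\gamma^k\|P_k\|_2$, since $\gamma>0$. Summing against $r^k$ gives $S_Q(r)=|\delta|\sum_k\|P_k\|_2(\gamma r)^k=|\delta|S_P(\gamma r)$. This also holds at $r=0$ with the convention $r^0=1$. For \eqref{eq:reversed-polynomial-weight}, we have $R_k=P_{d-k}$, so for $r>0$,
\[
S_R(r)=\sum_k\|P_{d-k}\|_2r^k=r^d\sum_j\|P_j\|_2r^{-j}=r^dS_P(r^{-1}),
\]
after reindexing with $j=d-k$.

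For \eqref{eq:scaled-polynomial-backward-error}, note that $Q(t)x=\delta P(\gamma t)x$ and $|\gamma t|=\gamma|t|$. If $S_P(\gamma|t|)>0$, then \cref{prop:backward-error-residual} gives
\[
\eta_Q^{\rm R}(x,t)=\frac{|\delta|\,\|P(\gamma t)x\|_2}{|\delta|S_P(\gamma|t|)\|x\|_2}=\eta_P^{\rm R}(x,\gamma t).
\]
The left version is identical, using $y^*Q(t)=\delta\,y^*P(\gamma t)$. For \eqref{eq:reversed-polynomial-backward-error}, with $t\ne0$, we use $R(t)=t^dP(t^{-1})$. Then $\|R(t)x\|_2=|t|^d\|P(t^{-1})x\|_2$ and $S_R(|t|)=|t|^dS_P(|t|^{-1})$. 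The factors $|t|^d$ cancel, and since $|t^{-1}|=|t|^{-1}$ the claim follows. The same computation applies on the left.

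The only step needing care is the degenerate case in which the weight sum vanishes, where \cref{prop:backward-error-residual} does not apply. In that case I will use the convention stated after \cref{prop:backward-error-residual}: the backward error is $0$ if the residual vanishes and $+\infty$ otherwise. The weight identities show that $S_Q(|t|)=0$ if and only if $S_P(\gamma|t|)=0$, because $\delta\neq0$. Likewise, $S_R(|t|)=0$ if and only if $S_P(|t|^{-1})=0$ for $t\neq0$. The residuals also vanish simultaneously, since they differ by the nonzero factors $\delta$ and $t^d$, so both sides agree in this case as well. Regularity of $P$ ensures that some coefficient is nonzero, so this case arises only at the zero argument. There it is still covered by the simultaneous-vanishing argument.
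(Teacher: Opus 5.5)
Your proof is correct and follows the paper's argument. You compute the weight identities coefficientwise, then note that scaling multiplies both the residual norm and the weight sum by $|\delta|$, and reversal multiplies both by $|t|^d$, so the ratio in Proposition~\ref{prop:backward-error-residual} is unchanged. Your treatment of the degenerate case where the weight sum vanishes adds a point of care the paper leaves implicit: by regularity this can only occur at $t=0$ in the scaled identity, and there both residuals vanish.
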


\begin{proof}
Since
\[
    \|\delta\gamma^kP_k\|_2
    =|\delta|\gamma^k\|P_k\|_2,
\]
summing over $k$ gives
\eqref{eq:scaled-polynomial-weight}.
Moreover,
\[
    Q(t)x=\delta P(\gamma t)x,
    \qquad
    y^*Q(t)=\delta y^*P(\gamma t).
\]
Thus, scaling multiplies each residual norm and its
backward-error denominator by the same factor $|\delta|$.

For reversal, reindexing gives
\[
    S_R(r)
    =\sum_{k=0}^{d}\|P_{d-k}\|_2r^k
    =r^dS_P(r^{-1}).
\]
The identity $R(t)=t^dP(t^{-1})$ shows that both the residual norm and its denominator are multiplied by $|t|^d$.
The backward-error identities follow.
\end{proof}

Thus, bounds proved for a scaled or reversed polynomial
apply to the original polynomial at the corresponding
eigenvalue. 

Scaling and reversal also preserve the strong
linearization property. 
\begin{lemma}[Scaling and reversal of linearizations]
\label{lem:linearization-scaling-reversal}
Let $L(\lambda)$ be a linearization of a matrix
polynomial $P(\lambda)$ of grade $d$.
For $\gamma>0$ and nonzero scalars $\delta,\kappa$,
define
\[
    Q(t)=\delta P(\gamma t),
    \qquad
    M(t)=\kappa L(\gamma t).
\]
Then $M$ is a linearization of $Q$.
If $L$ is strong, then $M$ is strong, and
$\operatorname{rev}_1L$ is a strong linearization
of $\operatorname{rev}_dP$.
\end{lemma}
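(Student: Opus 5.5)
The plan is to prove the three claims directly from Definition~\ref{def:strong-linearization}, transforming the unimodular equivalences by a change of variable and a reversal.

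\emph{Linearization.} Start from $U(\lambda)L(\lambda)V(\lambda)=\operatorname{diag}(P(\lambda),I_{(d-1)n})$ and substitute $\lambda=\gamma t$. The matrix polynomials $\widetilde U(t)=U(\gamma t)$ and $\widetilde V(t)=V(\gamma t)$ are unimodular, since their determinants are the same nonzero constants. Then
\[
    \widetilde U(t)\,M(t)\,\widetilde V(t)
    =\kappa\operatorname{diag}\bigl(P(\gamma t),I_{(d-1)n}\bigr).
\]
Multiply on the left by the constant invertible matrix $\operatorname{diag}(\delta\kappa^{-1}I_n,\kappa^{-1}I_{(d-1)n})$, which is unimodular. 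This gives $\operatorname{diag}(Q(t),I_{(d-1)n})$, so $M$ linearizes $Q$.

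\emph{Reversal of $L$.} If $L$ is strong, then by definition $\operatorname{rev}_1L$ linearizes $\operatorname{rev}_dP$. To show that it is a \emph{strong} linearization, we also need $\operatorname{rev}_1(\operatorname{rev}_1L)=L$ to linearize $\operatorname{rev}_d(\operatorname{rev}_dP)=P$. This holds by hypothesis. One point needs care: $P$ has grade $d$, possibly with $P_d=0$ or $P_0=0$, but $\operatorname{rev}_d$ is an involution on grade-$d$ polynomials, so there is no issue.

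\emph{Strongness of $M$.} Compute the reversals explicitly. Writing $L(\lambda)=\lambda X+Y$, we have
\[
    \operatorname{rev}_1M(t)=\kappa(X+t\gamma Y)=\kappa\gamma\,(\operatorname{rev}_1L)(\gamma^{-1}t)
\]
and
\[
    \operatorname{rev}_dQ(t)=\delta\sum_k\gamma^{d-k}P_{d-k}t^k=\delta\gamma^d(\operatorname{rev}_dP)(\gamma^{-1}t).
\]
Since $\operatorname{rev}_1L$ linearizes $\operatorname{rev}_dP$, the first part of the argument, applied with $\gamma^{-1}$, $\delta\gamma^d$ and $\kappa\gamma$ in place of $\gamma$, $\delta$ and $\kappa$, shows that $\operatorname{rev}_1M$ linearizes $\operatorname{rev}_dQ$. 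Hence $M$ is strong.

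The main obstacle is bookkeeping: getting the reversal identities for the scaled objects right, including the factors $\gamma$ and $\gamma^d$ and the grade convention for $Q$. The grade of $Q$ is $d$, because each $\delta\gamma^kP_k$ vanishes exactly when $P_k$ does. Once these identities are in place, everything reduces to the first step.
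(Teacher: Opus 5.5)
Your proof is correct and follows the paper's argument step for step. The steps are: substitute $\lambda=\gamma t$ and rescale by $\operatorname{diag}(\delta\kappa^{-1}I_n,\kappa^{-1}I_{(d-1)n})$; then rerun that argument via $\operatorname{rev}_1M(t)=\kappa\gamma\,\operatorname{rev}_1L(t/\gamma)$ and $\operatorname{rev}_dQ(t)=\delta\gamma^d\operatorname{rev}_dP(t/\gamma)$; finally, use the involution argument to show $\operatorname{rev}_1L$ is strong. One small slip: since $M(t)=t\,\kappa\gamma X+\kappa Y$, the intermediate expression should read $\operatorname{rev}_1M(t)=\kappa(\gamma X+tY)$, not $\kappa(X+t\gamma Y)$, although the final identity you actually use is correct.
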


\begin{proof}
Suppose
\[
    A(\lambda)L(\lambda)B(\lambda)
    =\operatorname{diag}(P(\lambda),I_{(d-1)n}),
\]
where $A$ and $B$ are unimodular.
Substituting $\lambda=\gamma t$ preserves
unimodularity. Multiplication by the constant
invertible matrix
\[
    \operatorname{diag}
    \bigl((\delta/\kappa)I_n,
          \kappa^{-1}I_{(d-1)n}\bigr)
\]
then gives a unimodular equivalence between $M$
and $\operatorname{diag}(Q,I_{(d-1)n})$.

If $L$ is strong, apply the same argument to
$\operatorname{rev}_1L$ and
$\operatorname{rev}_dP$, using
\[
    \operatorname{rev}_1M(t)
    =\kappa\gamma\,
      \operatorname{rev}_1L(t/\gamma),
    \qquad
    \operatorname{rev}_dQ(t)
    =\delta\gamma^d\,
      \operatorname{rev}_dP(t/\gamma).
\]
Thus $M$ is strong.

Finally, $\operatorname{rev}_1L$ linearizes
$\operatorname{rev}_dP$ by the strong property.
Reversing both again gives $L$ and $P$, so this
linearization is also strong.
\end{proof}

%

These lemmas let us work with scaled and reversed
polynomials and then transfer the results to the
original polynomial. Strong linearizations remain
strong, and backward-error bounds retain the same
constants.

For example, let $Q(t)=\delta P(\gamma t)$ and
$M(t)=\kappa L(\gamma t)$. If, for nonzero vectors
$x$ and $z$, we prove
\[
    \eta_Q^{\rm R}(x,t)
    \leq C\,\eta_M^{\rm R}(z,t),
\]
then backward-error invariance gives
\[
    \eta_P^{\rm R}(x,\gamma t)
    \leq C\,\eta_L^{\rm R}(z,\gamma t).
\]
Thus, returning to the original variable does not
increase the bound. The same argument applies to
left vectors and to reversal, with $\lambda=t^{-1}$.

\subsection{Backward error growth under linearization}
\label{sec:backward-error-growth}

Suppose we solve the polynomial eigenvalue problem through a linearization $L$ and recover a polynomial vector $x=E_{\rm R}z$ from a computed pencil vector $z$.
We want to compare the backward error of $(\lambda,x)$ with that of $(\lambda,z)$.
The one-sided factorizations introduced above give this comparison directly \cite{HighamLiTisseur2007}.

\begin{proposition}
\label{prop:backward-error-transfer}
Suppose the factorizations
\eqref{eq:residual-transfer-identities} hold at $\lambda$,
with $S_P(|\lambda|)>0$ and $W_L(|\lambda|)>0$.
For $z\neq0$ and $x=E_{\rm R}z\neq0$,
\begin{equation}
\label{eq:right-backward-error-transfer}
    \eta_P^{\rm R}(x,\lambda)
    \leq
    \frac{W_L(|\lambda|)\|H_{\rm R}(\lambda)\|_2}
         {S_P(|\lambda|)}
    \frac{\|z\|_2}{\|x\|_2}
    \eta_L^{\rm R}(z,\lambda).
\end{equation}
Similarly, for $w\neq0$ and $y=E_{\rm L}^*w\neq0$,
\begin{equation}
\label{eq:left-backward-error-transfer}
    \eta_P^{\rm L}(y,\lambda)
    \leq
    \frac{W_L(|\lambda|)\|H_{\rm L}(\lambda)\|_2}
         {S_P(|\lambda|)}
    \frac{\|w\|_2}{\|y\|_2}
    \eta_L^{\rm L}(w,\lambda).
\end{equation}
\end{proposition}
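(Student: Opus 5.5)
Here $W_L(r)=\|X\|_2 r+\|Y\|_2$ is the weight function of the pencil $L(\lambda)=\lambda X+Y$; that is, $W_L=S_L$ with $L$ viewed as a degree-one polynomial. With this reading, both pencil backward errors are given by \Cref{prop:backward-error-residual} applied with degree $1$ and weights $\|Y\|_2,\|X\|_2$:
\[
    \eta_L^{\rm R}(z,\lambda)=\frac{\|L(\lambda)z\|_2}{W_L(|\lambda|)\|z\|_2},
    \qquad
    \eta_L^{\rm L}(w,\lambda)=\frac{\|w^*L(\lambda)\|_2}{W_L(|\lambda|)\|w\|_2}.
\]
These formulas are valid because $W_L(|\lambda|)>0$ and $z,w\neq0$. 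Likewise, \eqref{eq:polynomial-backward-error} and \eqref{eq:left-polynomial-backward-error} express $\eta_P^{\rm R}(x,\lambda)$ and $\eta_P^{\rm L}(y,\lambda)$ as residual norms divided by $S_P(|\lambda|)$ times the vector norm. Here $S_P(|\lambda|)>0$ and $x,y\neq0$ by hypothesis.

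For the right inequality, I will use the residual identity \eqref{eq:recovered-polynomial-residuals}, $P(\lambda)x=H_{\rm R}(\lambda)L(\lambda)z$, which follows from \eqref{eq:residual-transfer-identities} applied to $z$ with $x=E_{\rm R}z$. Taking norms gives
\[
    \|P(\lambda)x\|_2\le\|H_{\rm R}(\lambda)\|_2\|L(\lambda)z\|_2 .
\]
Next I substitute $\|L(\lambda)z\|_2=W_L(|\lambda|)\|z\|_2\,\eta_L^{\rm R}(z,\lambda)$ and divide by $S_P(|\lambda|)\|x\|_2$. The quantity
\[
    \frac{W_L(|\lambda|)\|H_{\rm R}(\lambda)\|_2}{S_P(|\lambda|)}
    \frac{\|z\|_2}{\|x\|_2}
    \eta_L^{\rm R}(z,\lambda)
\]
then appears as an upper bound for $\eta_P^{\rm R}(x,\lambda)$, which is \eqref{eq:right-backward-error-transfer}.

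The left case is symmetric. From $y^*P(\lambda)=w^*L(\lambda)H_{\rm L}(\lambda)$, with $y=E_{\rm L}^*w$, I obtain
\[
    \|y^*P(\lambda)\|_2\le\|w^*L(\lambda)\|_2\|H_{\rm L}(\lambda)\|_2 .
\]
Substituting the left pencil backward-error formula and dividing by $S_P(|\lambda|)\|y\|_2$ gives \eqref{eq:left-backward-error-transfer}.

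There is no real obstacle. The only care needed is to confirm that the positivity hypotheses make every denominator nonzero, so that the explicit residual formulas apply to both $P$ and $L$. It is also worth noting that \Cref{prop:backward-error-residual} is stated for general degree and weights, so it covers the pencil case with $d=1$ directly.
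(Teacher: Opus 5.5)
Your argument is correct and follows the paper's route. The paper gives no separate proof; it says the one-sided factorizations give the comparison directly, that is, it combines the residual bounds following \eqref{eq:recovered-polynomial-residuals} with the explicit residual formulas for the polynomial and pencil backward errors, as you do. Your reading $W_L(r)=\|X\|_2r+\|Y\|_2$ also matches how the paper later uses $W_S$, for instance in $W_S(r)\le 2r/U+\sqrt d+1$.
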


Each bound contains two factors: one depends on $\lambda$ and the chosen factorization, and the other on the pencil vector and the selected recovery block.
We keep these factors separate.

\begin{definition}
\label{def:backward-error-growth-factors}
For the chosen one-sided factorizations, define the
right and left residual amplification factors by
\[
    g_{\rm R}(\lambda)
    =
    \frac{W_L(|\lambda|)\|H_{\rm R}(\lambda)\|_2}
         {S_P(|\lambda|)},
    \qquad
    g_{\rm L}(\lambda)
    =
    \frac{W_L(|\lambda|)\|H_{\rm L}(\lambda)\|_2}
         {S_P(|\lambda|)}.
\]
For pencil vectors with nonzero selected blocks,
define the recovery norm ratios by
\[
    \rho_{\rm R}(z)
    =\frac{\|z\|_2}{\|E_{\rm R}z\|_2},
    \qquad
    \rho_{\rm L}(w)
    =\frac{\|w\|_2}{\|E_{\rm L}^*w\|_2}.
\]
The corresponding backward-error growth factors are
the products
\[
    \Gamma_{\rm R}(\lambda,z)
    =g_{\rm R}(\lambda)\rho_{\rm R}(z),
    \qquad
    \Gamma_{\rm L}(\lambda,w)
    =g_{\rm L}(\lambda)\rho_{\rm L}(w).
\]
\end{definition}

The bounds in Proposition \ref{prop:backward-error-transfer} can now be written as
\[
    \eta_P^{\rm R}(x,\lambda)
    \leq g_{\rm R}(\lambda)\rho_{\rm R}(z)
          \eta_L^{\rm R}(z,\lambda)
    =\Gamma_{\rm R}(\lambda,z)\eta_L^{\rm R}(z,\lambda),
\]
and
\[
    \eta_P^{\rm L}(y,\lambda)
    \leq g_{\rm L}(\lambda)\rho_{\rm L}(w)
          \eta_L^{\rm L}(w,\lambda)
    =\Gamma_{\rm L}(\lambda,w)\eta_L^{\rm L}(w,\lambda).
\]

The residual amplification factors depend on the
polynomial, the pencil, the eigenvalue, and the chosen
factorizations, but not on the pencil vectors.
The recovery norm ratios compare each pencil vector
with its selected block. They are at least one and
can be large when that block is small.
To transfer a small pencil backward error to the
polynomial, we therefore need to control both factors.

For the factorizations with scalar factors introduced above, substituting $H_{\rm R}=G/h_{\rm R}$ and $H_{\rm L}=F/h_{\rm L}$ gives the residual amplification factors
\begin{equation}
\label{eq:scalar-multiplier-growth-factors}
    g_{\rm R}(\lambda)
    =
    \frac{W_L(|\lambda|)\|G(\lambda)\|_2}
         {|h_{\rm R}(\lambda)|S_P(|\lambda|)},
    \qquad
    g_{\rm L}(\lambda)
    =
    \frac{W_L(|\lambda|)\|F(\lambda)\|_2}
         {|h_{\rm L}(\lambda)|S_P(|\lambda|)}.
\end{equation}

Our construction assigns each pencil a region on which
both residual amplification factors have bounds
depending only on $d$. We also bound the recovery norm
ratios for exact pencil eigenvectors.

\subsection{Tropical roots}
\label{sec:tropical-roots}

As the radius $r$ increases, different terms $a_kr^k$ may become the largest. 
Tropical roots are the radii at which this change occurs.
They depend only on the coefficient norms
\[
    a_k=\|P_k\|_2.
\]
We assume $a_0,a_d>0$.

The associated tropical polynomial is
\[
    T_P(r)=\max_{0\leq k\leq d}a_kr^k,
    \qquad r>0.
\]
It records the largest coefficient term at radius $r$.
Since $S_P(r)=\sum_{k=0}^{d}a_kr^k$, we have
\begin{equation}
\label{eq:tropical-sum-comparison}
    T_P(r)\leq S_P(r)\leq(d+1)T_P(r).
\end{equation}
Thus, the largest term $T_P(r)$ approximates the sum $S_P(r)$ in the backward-error denominator within a factor of $d+1$.

To see where the dominant term changes, write $r=e^u$.
Then
\[
    \phi_P(u)=\log T_P(e^u)
    =\max_{k:a_k>0}\{\log a_k+ku\}.
\]
Each coefficient contributes a line with slope $k$
and intercept $\log a_k$. Their maximum is a convex,
piecewise affine function.

A \emph{tropical root} is a radius $\alpha>0$ at which
$\phi_P$ changes slope, that is, at $u=\log\alpha$.
Its multiplicity is the increase in slope.
At such a radius, at least two coefficient terms
attain the maximum $T_P(\alpha)$. Two terms that
intersect below the maximum do not define a tropical root.

The same information comes from the upper convex hull
of the coefficient points
\[
    (k,\log a_k),\qquad a_k>0.
\]
Coefficients with $a_k=0$ contribute no points.

Let
\[
    0=v_0<v_1<\cdots<v_s=d
\]
be the indices of the upper hull vertices.
We count only the endpoints of each maximal straight segment, so points inside a segment do not create additional vertices.

Each upper hull segment determines one distinct tropical root. 
To see this, consider the segment joining $(v_{i-1},\log a_{v_{i-1}})$ to
$(v_i,\log a_{v_i})$. Its slope is
\[
    \sigma_i
    =
    \frac{\log a_{v_i}-\log a_{v_{i-1}}}
         {v_i-v_{i-1}}.
\]
The corresponding coefficient terms are equal when
\[
    a_{v_{i-1}}\alpha_i^{v_{i-1}}
    =a_{v_i}\alpha_i^{v_i},
\]
which gives
\begin{equation}
\label{eq:tropical-hull-roots}
    \alpha_i=e^{-\sigma_i}
    =
    \left(\frac{a_{v_{i-1}}}{a_{v_i}}\right)^{
        1/(v_i-v_{i-1})},
    \qquad i=1,\ldots,s.
\end{equation}
Because this segment belongs to the upper hull,
all coefficient points lie on or below its supporting
line. Consequently, the two endpoint terms attain
$T_P(\alpha_i)$. Any coefficient point inside the
segment also attains this maximum at $\alpha_i$.

The upper hull slopes strictly decrease from left
to right, so
\[
    \alpha_1<\cdots<\alpha_s.
\]
At $\alpha_i$, the slope of $\phi_P$ changes from
$v_{i-1}$ to $v_i$. Hence the multiplicity is
\[
    m_i=v_i-v_{i-1},
    \qquad
    \sum_{i=1}^{s}m_i=d.
\]
Between consecutive tropical roots, the term associated
with their common hull vertex is dominant:
\[
    T_P(r)=a_{v_i}r^{v_i},
    \qquad \alpha_i\leq r\leq\alpha_{i+1}.
\]
The endpoint terms dominate for $0<r\leq\alpha_1$
and $r\geq\alpha_s$, respectively.
These intervals will determine the regions used in
our linearization construction.

Figure~\ref{fig:tropical-upper-hull} illustrates this
construction: each upper hull segment determines a
tropical root, and its horizontal length gives the
root's multiplicity.

\begin{figure}[htbp]
\centering
\begin{tikzpicture}[
    x=0.85cm,
    y=0.75cm,
    >=stealth,
    hull/.style={blue!70!black,thick},
    guide/.style={gray!65,densely dotted},
    vertex/.style={
        circle,fill=blue!70!black,
        inner sep=2.4pt
    },
    coefficient/.style={
        circle,fill=red!75!black,
        inner sep=2pt
    }
]
\coordinate (V0) at (0,1.0);
\coordinate (V1) at (1,3.0);
\coordinate (V2) at (3,4.0);
\coordinate (V3) at (6,5.0);
\coordinate (V4) at (9,5.5);
\coordinate (V5) at (11,4.5);
\coordinate (V6) at (14,0.5);

\draw[->] (0,0) -- (15,0) node[right] {$k$};
\draw[->] (0,0) -- (0,6.3)
    node[above] {$\log a_k$};

\foreach \k in {0,...,14}
    \draw (\k,0.06) -- (\k,-0.06);

\foreach \point/\index/\label in {
    V0/0/{v_0=0},
    V1/1/{v_1},
    V2/3/{v_2},
    V3/6/{v_{i-1}},
    V4/9/{v_i},
    V5/11/{v_{s-1}},
    V6/14/{v_s=d}
}{
    \draw[guide] (\point) -- (\index,0);
    \node[below=5pt] at (\index,0) {$\label$};
}

\foreach \point/\height/\label in {
    V0/1.0/{\log a_{v_0}},
    V1/3.0/{\log a_{v_1}},
    V2/4.0/{\log a_{v_2}},
    V3/5.0/{\log a_{v_{i-1}}},
    V4/5.5/{\log a_{v_i}},
    V6/0.5/{\log a_{v_s}}
}{
    \draw (-0.06,\height) -- (0.06,\height);
    \node[left=4pt] at (0,\height) {$\label$};
}

\draw[hull]
    (V0) -- (V1) -- (V2) -- (V3)
    -- (V4) -- (V5) -- (V6);

\foreach \k/\height in {
    2/1.5,
    4/2.2,
    5/3.4,
    7/2.5,
    8/4.5,
    10/3.6,
    12/0.7,
    13/1.5
}{
    \node[coefficient] at (\k,\height) {};
}

\foreach \point in {V0,V1,V2,V3,V4,V5,V6}
    \node[vertex] at (\point) {};

\node[above=9pt,blue!70!black] at (7.5,5.25)
    {$\displaystyle
      \alpha_i=
      \left(\frac{a_{v_{i-1}}}{a_{v_i}}\right)^{
      1/(v_i-v_{i-1})}$};

\end{tikzpicture}
\caption{Upper convex hull of the coefficient points
$(k,\log a_k)$, where $a_k=\|P_k\|_2$.
Blue points are hull vertices; red points lie below
the hull. Each hull segment determines a tropical
root: the segment from $v_{i-1}$ to $v_i$ has slope
$-\log\alpha_i$ and horizontal length equal to its
multiplicity $v_i-v_{i-1}$.}
\label{fig:tropical-upper-hull}
\end{figure}

\begin{example}
Consider a degree-five matrix polynomial with coefficient
norms
\[
    (a_0,a_1,a_2,a_3,a_4,a_5)
    =(1,10,10^4,10^5,10^6,10^4).
\]
Using base-ten logarithms, the coefficient points are
\[
    (k,\log_{10}a_k)
    =(0,0),(1,1),(2,4),(3,5),(4,6),(5,4).
\]
The upper hull has vertex indices $0,2,4,5$.
Its segments have slopes $2$, $1$, and $-2$, so the
distinct tropical roots are
\[
    \alpha_1=10^{-2},\qquad
    \alpha_2=10^{-1},\qquad
    \alpha_3=10^2,
\]
with multiplicities $2$, $2$, and $1$, respectively.

The point at index $1$ lies below the hull, so its term
never attains the maximum. The point at index $3$ lies
inside the segment joining indices $2$ and $4$.
Its term attains the maximum at $\alpha_2$, but does
not create another tropical root.
Figure~\ref{fig:tropical-degree-five} illustrates
these two cases.


\begin{figure}[htbp]
\centering
\begin{tikzpicture}[x=1.25cm,y=0.65cm,>=stealth]
\draw[->] (-0.25,0) -- (5.7,0) node[right] {$k$};
\draw[->] (0,-0.2) -- (0,7)
    node[above] {$\log_{10}a_k$};

\foreach \k in {0,...,5}{
    \draw (\k,0.07) -- (\k,-0.07);
    \node[below=4pt] at (\k,0) {$\k$};
}
\foreach \b in {1,...,6}{
    \draw (-0.06,\b) -- (0.06,\b);
    \node[left=4pt] at (0,\b) {$\b$};
}

\foreach \k/\b in {1/1,2/4,3/5,4/6,5/4}
    \draw[gray!60,densely dotted] (\k,0) -- (\k,\b);

\draw[blue!70!black,thick]
    (0,0) -- (2,4) -- (4,6) -- (5,4);

\node[blue!70!black,above left] at (1.7,2.75)
    {$\alpha_1=10^{-2}$};
\node[blue!70!black,above left] at (3,5)
    {$\alpha_2=10^{-1}$};
\node[blue!70!black,above right] at (4.5,5)
    {$\alpha_3=10^2$};

\foreach \k/\b in {0/0,2/4,4/6,5/4}
    \fill[blue!70!black] (\k,\b) circle (2.5pt);

\foreach \k/\b in {1/1,3/5}
    \fill[red!75!black] (\k,\b) circle (2.3pt);

\end{tikzpicture}
\caption{Upper convex hull of $(k,\log_{10}a_k)$
for the degree-five example. A segment of slope
$\sigma$ gives a tropical root $10^{-\sigma}$;
its horizontal length gives the multiplicity.
The red points lie below the hull or inside a hull segment.}
\label{fig:tropical-degree-five}
\end{figure}
\end{example}

We use tropical roots to choose the scalings in our
linearization construction.
Our use of these roots differs from the scaling approaches in \cite{GaubertSharify2009,NoferiniSharifyTisseur2015}, as we explain below.

\section{Dominance regions}

Each coefficient term $a_jr^j$ may attain the maximum $T_P(r)$ over a range of radii. 
For $a_j>0$, define its \emph{dominance interval} by
\[
    I_j=\{r>0:a_jr^j=T_P(r)\}.
\]
The corresponding \emph{dominance region} in the complex plane is
\[
    \mathcal D_j=\{\lambda\in\mathbb C\setminus\{0\}:
                         |\lambda|\in I_j\}.
\]
Thus, $I_j$ describes the radii, while $\mathcal D_j$ describes the associated annulus or endpoint region.
Several terms may attain the maximum at the same radius, so these regions may overlap at their boundaries.

For an interior index $1\leq j\leq d-1$ with $a_j>0$, set
\begin{equation}
\label{eq:tropical-dominance-endpoints}
    \ell_j=
    \max_{\substack{0\leq k<j\\a_k>0}}
    \left(\frac{a_k}{a_j}\right)^{1/(j-k)},
    \qquad
    h_j=
    \min_{\substack{j<k\leq d\\a_k>0}}
    \left(\frac{a_j}{a_k}\right)^{1/(k-j)}.
\end{equation}
The lower bound $\ell_j$ ensures that $a_jr^j$ is at least as large as every lower-degree term. 
The upper bound $h_j$ ensures the same for every higher-degree term.

\begin{proposition}
\label{prop:tropical-dominance}
For an interior index $j$ with $a_j>0$,
\[
    I_j=
    \begin{cases}
        [\ell_j,h_j],&\ell_j\leq h_j,\\
        \varnothing,&\ell_j>h_j.
    \end{cases}
\]
The upper convex hull gives the following description:
\begin{enumerate}
    \item If $j=v_i$ is an interior hull vertex, then
    \[
        I_j=[\alpha_i,\alpha_{i+1}].
    \]
    \item If $(j,\log a_j)$ lies in the relative interior of the hull segment associated with $\alpha_i$, then
    \[
        I_j=\{\alpha_i\}.
    \]
    \item If $(j,\log a_j)$ lies strictly below the
    upper hull, then $I_j=\varnothing$.
\end{enumerate}
The endpoint intervals are
\[
    I_0=(0,\alpha_1],
    \qquad
    I_d=[\alpha_s,\infty).
\]
\end{proposition}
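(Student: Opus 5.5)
The plan is to prove the interval formula directly from the definition of $I_j$, and then read off the three hull cases from the geometry of the upper hull. Fix an interior index $j$ with $a_j>0$ and $r>0$. By definition, $r\in I_j$ if and only if $a_jr^j\geq a_kr^k$ for every $k$ with $a_k>0$ (terms with $a_k=0$ impose nothing).

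For $k<j$ the inequality $a_kr^k\le a_jr^j$ is equivalent to $r^{j-k}\ge a_k/a_j$, that is, to $r\ge (a_k/a_j)^{1/(j-k)}$. For $k>j$ it is equivalent to $r\le (a_j/a_k)^{1/(k-j)}$. Since $a_0,a_d>0$, both index sets in \eqref{eq:tropical-dominance-endpoints} are nonempty, so $\ell_j$ and $h_j$ are finite and positive. Intersecting all these constraints gives $I_j=\{r>0:\ell_j\le r\le h_j\}$, which is $[\ell_j,h_j]$ or empty. The endpoint intervals follow in the same way. For $j=0$, only constraints from $k>0$ appear, so $I_0=(0,h_0]$. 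For $j=d$, we get $I_d=[\ell_d,\infty)$. It remains to identify $h_0=\alpha_1$ and $\ell_d=\alpha_s$, which is a special case of the hull argument below.

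For the hull descriptions I would translate to logarithms. The quantity $\log(a_k/a_j)^{1/(j-k)}=-\frac{\log a_j-\log a_k}{j-k}$ is minus the slope of the chord from $(k,\log a_k)$ to $(j,\log a_j)$. So $\ell_j=\exp(-\min_{k<j}\text{slope}(k,j))$ and $h_j=\exp(-\max_{k>j}\text{slope}(j,k))$.

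In case (1), $j=v_i$ is a hull vertex. Every point lies on or below the supporting lines of the two adjacent hull segments, so the chords to the left have slope at least $\sigma_i$, with equality at $k=v_{i-1}$. Likewise the chords to the right have slope at most $\sigma_{i+1}$, with equality at $k=v_{i+1}$. Hence $\ell_j=e^{-\sigma_i}=\alpha_i$ and $h_j=\alpha_{i+1}$ by \eqref{eq:tropical-hull-roots}. The same reasoning at $j=0$ and $j=d$ gives the endpoint intervals.

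In case (2), $(j,\log a_j)$ lies on the open segment from $v_{i-1}$ to $v_i$. The chords to these two endpoints both have slope $\sigma_i$, while all points lie below the supporting line. This gives $\min_{k<j}\text{slope}\le\sigma_i\le\max_{k>j}\text{slope}$. I also need the reverse inequalities. Because the point is on the supporting line and every other point is below it, any chord to a left point has slope $\ge\sigma_i$ and any chord to a right point has slope $\le\sigma_i$. So $\ell_j=h_j=\alpha_i$.

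In case (3), the point lies strictly below the hull. Take the hull segment $[v_{i-1},v_i]$ whose horizontal range contains $j$, with $v_{i-1}<j<v_i$ (if $j$ equals a vertex index it would be on the hull). Strict belowness means the chord from $v_{i-1}$ to $j$ has slope $>\sigma_i$ and the chord from $j$ to $v_i$ has slope $<\sigma_i$. This gives $\ell_j\ge$ (chord bound) $>\alpha_i$-side and $h_j<$ it, so $\ell_j>h_j$ and $I_j=\varnothing$.

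The main obstacle is careful bookkeeping in these slope comparisons, especially getting the directions of the inequalities right under the sign flip $\alpha=e^{-\sigma}$. For that reason I would state one auxiliary fact once and reuse it in all three cases: if a point lies on or below a line of slope $\sigma$ passing through $(j,\log a_j)$, then its chord slope to $j$ compares with $\sigma$ according to which side of $j$ it lies on.
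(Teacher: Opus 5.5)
Your case (3) has both slope inequalities reversed, and with the directions you state, the conclusion does not follow. Write $\tau(k,m)=(\log a_m-\log a_k)/(m-k)$ for the chord slope. Suppose $v_{i-1}<j<v_i$ and $(j,\log a_j)$ lies strictly below the segment of slope $\sigma_i$ joining the two vertices. Then $\log a_j-\log a_{v_{i-1}}<\sigma_i(j-v_{i-1})$ and $\log a_{v_i}-\log a_j>\sigma_i(v_i-j)$. Hence $\tau(v_{i-1},j)<\sigma_i<\tau(j,v_i)$: the chord from the left vertex dips below the segment, and the chord to the right vertex climbs back above it. This is just your auxiliary fact applied to the supporting line anchored at $v_{i-1}$, respectively $v_i$, instead of at $j$. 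Your stated inequalities, $\tau(v_{i-1},j)>\sigma_i$ and $\tau(j,v_i)<\sigma_i$, give only $\ell_j\geq e^{-\tau(v_{i-1},j)}$ with that bound below $\alpha_i$, and $h_j\leq e^{-\tau(j,v_i)}$ with that bound above $\alpha_i$. These say nothing about how $\ell_j$ compares with $h_j$; the phrase ``$\ell_j\geq$ (chord bound) $>\alpha_i$-side'' hides this. With the correct directions you get $\ell_j\geq(a_{v_{i-1}}/a_j)^{1/(j-v_{i-1})}=e^{-\tau(v_{i-1},j)}>\alpha_i$ and $h_j\leq(a_j/a_{v_i})^{1/(v_i-j)}=e^{-\tau(j,v_i)}<\alpha_i$. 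So $\ell_j>h_j$ and $I_j=\varnothing$. The interval formula, cases (1) and (2), and the endpoint intervals are correct as you argue them.

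Once this is fixed, your hull argument takes a different route from the paper's; the first step, intersecting the pairwise constraints, is the same. The paper works in the dual picture. The line $u\mapsto\log a_{v_i}+v_iu$ attains the maximum in $\phi_P$ exactly between the breakpoints $\log\alpha_i$ and $\log\alpha_{i+1}$. A point inside a segment touches $\phi_P$ only at that segment's breakpoint, and a point below the hull never touches it. This is short, but it relies on the correspondence between hull segments and breakpoints of $\phi_P$ sketched in the preliminaries. You stay in the Newton-polygon picture, writing $\ell_j=e^{-\min_{k<j}\tau(k,j)}$ and $h_j=e^{-\max_{k>j}\tau(j,k)}$ (over indices with $a_k>0$), and derive every case from the supporting-line property alone. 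This ties the hull description directly to the formula $[\ell_j,h_j]$ you just proved and identifies where the extremal chords are attained. It also records why $\ell_j$ and $h_j$ are finite and positive, which the paper leaves implicit.
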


\begin{proof}
For $k<j$ with $a_k>0$,
\[
    a_kr^k\leq a_jr^j
    \quad\text{ if and only if }\quad
    r\geq\left(\frac{a_k}{a_j}\right)^{1/(j-k)}.
\]
For $k>j$ with $a_k>0$,
\[
    a_kr^k\leq a_jr^j
    \quad\text{ if and only if }\quad
    r\leq\left(\frac{a_j}{a_k}\right)^{1/(k-j)}.
\]
Terms with $a_k=0$ impose no restriction.
Intersecting these inequalities gives the formula for $I_j$.

For an interior hull vertex $v_i$, the line
$\log a_{v_i}+v_i u$ attains the maximum defining
$\phi_P(u)$ precisely between the consecutive breakpoints
$\log\alpha_i$ and $\log\alpha_{i+1}$, including both
endpoints. This gives $I_{v_i}=[\alpha_i,\alpha_{i+1}]$.

A coefficient point inside a hull segment attains the
maximum only at the breakpoint associated with that
segment. A point strictly below the hull never attains
the maximum. Finally, the terms of degree zero and $d$
dominate before the first breakpoint and after the last,
respectively, giving the endpoint intervals.
\end{proof}

\begin{example}
For the degree-five example in Figure~\ref{fig:tropical-degree-five}, the coefficient norms are
\[
    (a_0,\ldots,a_5)=(1,10,10^4,10^5,10^6,10^4),
\]
and the tropical roots are
\[
    \alpha_1=10^{-2},\qquad
    \alpha_2=10^{-1},\qquad
    \alpha_3=10^2.
\]
The hull vertices have indices $0,2,4,5$, so their dominance intervals are
\[
    I_0=(0,10^{-2}],\qquad
    I_2=[10^{-2},10^{-1}],\qquad
    I_4=[10^{-1},10^2],\qquad
    I_5=[10^2,\infty).
\]
The point at index $1$ lies below the hull, giving $I_1=\varnothing$. 
The point at index $3$ lies inside the hull segment joining indices $2$ and $4$, giving
\[
    I_3=\{10^{-1}\}.
\]
At $r=10^{-1}$, the three terms $a_2r^2$, $a_3r^3$, and $a_4r^4$ all attain $T_P(r)$.
\end{example}

\begin{corollary}
\label{cor:dominance-covering}
The dominance intervals associated with the upper hull vertices cover all positive radii:
\[
    \bigcup_{i=0}^{s} I_{v_i}=(0,\infty).
\]
Consequently,
\[
    \bigcup_{i=0}^{s}\mathcal D_{v_i}
    =\mathbb C\setminus\{0\}.
\]
With the convention that $0\in\mathcal D_0$, these
regions cover the whole complex plane.

No other coefficient indices are needed for this
covering.
Points below the hull have empty dominance intervals, and points inside hull segments have singleton intervals already contained in those of
the adjacent hull vertices.
\end{corollary}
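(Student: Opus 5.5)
The plan is to read the covering directly off Proposition~\ref{prop:tropical-dominance}, which already gives every dominance interval of a hull vertex explicitly. The hull vertices are $v_0=0<v_1<\cdots<v_s=d$, and the endpoint intervals are $I_{v_0}=I_0=(0,\alpha_1]$ and $I_{v_s}=I_d=[\alpha_s,\infty)$. For each interior vertex $v_i$ with $1\leq i\leq s-1$, part~1 of that proposition gives $I_{v_i}=[\alpha_i,\alpha_{i+1}]$. Since $\alpha_1<\cdots<\alpha_s$, the union
\[
    (0,\alpha_1]\cup[\alpha_1,\alpha_2]\cup\cdots\cup[\alpha_{s-1},\alpha_s]\cup[\alpha_s,\infty)
\]
is a chain of intervals in which consecutive ones share an endpoint, and so it equals $(0,\infty)$. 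If $s=1$, there are no interior vertices, and $(0,\alpha_1]\cup[\alpha_1,\infty)=(0,\infty)$ directly.

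An alternative argument avoids the case split. For every $r>0$, the maximum $T_P(r)=\max_k a_kr^k$ is attained by some index $k$ with $a_k>0$. If that index is not a hull vertex, then it lies inside a hull segment, because it attains the maximum. In that case $r=\alpha_i$, and the segment endpoint $v_{i-1}$ also attains the maximum. Either way, some hull vertex dominates at $r$. I will use the explicit chain as the main argument and mention this as a check.

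The statement for the regions $\mathcal D_{v_i}$ then follows by taking preimages under $\lambda\mapsto|\lambda|$. By definition, $\mathcal D_j$ consists of the nonzero $\lambda$ with $|\lambda|\in I_j$, and every nonzero $\lambda$ has $|\lambda|\in(0,\infty)$. Adding the convention $0\in\mathcal D_0$ covers the whole plane.

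The final remark about other indices follows from parts~2 and~3 of Proposition~\ref{prop:tropical-dominance}. Points strictly below the hull have $I_j=\varnothing$. A point in the relative interior of the segment for $\alpha_i$ has $I_j=\{\alpha_i\}$, and $\alpha_i$ is the right endpoint of $I_{v_{i-1}}$ and the left endpoint of $I_{v_i}$, so this singleton is already covered. I expect no real obstacle. The only care needed is at the endpoints, namely including $\alpha_1$ and $\alpha_s$ in the endpoint intervals and handling the case $s=1$ separately. Both are already settled by the proposition.
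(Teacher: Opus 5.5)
Your proposal is correct and takes essentially the same approach as the paper: you read the vertex intervals $(0,\alpha_1]$, $[\alpha_i,\alpha_{i+1}]$, and $[\alpha_s,\infty)$ off Proposition~\ref{prop:tropical-dominance}, observe that their union is $(0,\infty)$, and note that a point inside the $i$th segment has $I_j=\{\alpha_i\}\subseteq I_{v_{i-1}}\cap I_{v_i}$. Your separate treatment of $s=1$ and the alternative maximizing-index argument are harmless additions that the paper leaves implicit.
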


\begin{proof}
The vertex intervals are
\[
    I_{v_0}=(0,\alpha_1],\qquad
    I_{v_i}=[\alpha_i,\alpha_{i+1}]
    \quad(1\leq i\leq s-1),\qquad
    I_{v_s}=[\alpha_s,\infty).
\]
Their union is $(0,\infty)$.
A point inside the $i$th hull segment has dominance interval $\{\alpha_i\}$, which lies in both $I_{v_{i-1}}$ and $I_{v_i}$.
\end{proof}

\subsection{Scaling inside a dominance region}

We now scale the polynomial on each dominance interval associated with an interior hull vertex. The aim is to normalize the dominant coefficient and control the other coefficients in terms of the interval's width.

Let $j=v_i$, where $1\leq i\leq s-1$, so that
\[
    I_j=[\ell_j,h_j]=[\alpha_i,\alpha_{i+1}].
\]
Define
\begin{equation}
\label{eq:tropical-midpoint-scale}
    \gamma_j=\sqrt{\ell_jh_j}
             =\sqrt{\alpha_i\alpha_{i+1}},
    \qquad
    U_j=\sqrt{\frac{h_j}{\ell_j}}
        =\sqrt{\frac{\alpha_{i+1}}{\alpha_i}}>1.
\end{equation}
The scale $\gamma_j$ is the geometric mean of the two bounding tropical roots, or equivalently, the midpoint in logarithmic coordinates:
\[
    \log\gamma_j
    =\frac{\log\alpha_i+\log\alpha_{i+1}}{2}.
\]
Since $\ell_j=\gamma_j/U_j$ and $h_j=\gamma_jU_j$, scaling by $\gamma_j$ maps the dominance interval to $[U_j^{-1},U_j]$. 
Thus, $U_j$ measures its width.

We split the dominance annulus at $\gamma_j$:
\begin{equation}
\label{eq:tropical-half-annuli}
    \mathcal R_j^-=
    \{\lambda:\ell_j\leq|\lambda|\leq\gamma_j\},
    \qquad
    \mathcal R_j^+=
    \{\lambda:\gamma_j\leq|\lambda|\leq h_j\}.
\end{equation}
Under the change of variable $t=\lambda/\gamma_j$, these half annuli become
\[
    U_j^{-1}\leq|t|\leq1,
    \qquad
    1\leq|t|\leq U_j,
\]
respectively.

Tropical scaling at a root $\alpha$ normalizes the polynomial as
\[
    \widehat P_\alpha(t)
    =\frac{P(\alpha t)}{T_P(\alpha)}.
\]
Every coefficient of this polynomial has norm at most one, and at least two have norm one \cite{GaubertSharify2009,NoferiniSharifyTisseur2015}.
Here we instead scale at the midpoint $\gamma_j$ and set
\begin{equation}
\label{eq:tropical-midpoint-polynomial}
    Q_j(t)
    =\frac{P(\gamma_jt)}{T_P(\gamma_j)}
    =\frac{P(\gamma_jt)}{a_j\gamma_j^j}
    =\sum_{k=0}^{d}Q_{j,k}t^k,
    \qquad
    Q_{j,k}=\frac{\gamma_j^{k-j}}{a_j}P_k.
\end{equation}
This choice gives a stronger coefficient bound:
the upper bound decreases geometrically with the
distance from the dominant index $j$, at a rate
determined by $U_j$.

\begin{lemma}
\label{lem:tropical-coefficient-decay}
Suppose $s\geq2$, and let $j=v_i$ be an interior hull vertex, with $1\leq i\leq s-1$.
Then
\begin{equation}
\label{eq:tropical-coefficient-decay}
    \|Q_{j,j}\|_2=1,
    \qquad
    \|Q_{j,k}\|_2\leq U_j^{-|k-j|},
    \qquad 0\leq k\leq d.
\end{equation}
Moreover, for $U_j^{-1}\leq r\leq U_j$,
\[
    T_{Q_j}(r)=r^j,
    \qquad
    r^j\leq S_{Q_j}(r)\leq(d+1)r^j.
\]
\end{lemma}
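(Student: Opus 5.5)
The plan is to reduce everything to the dominance inequalities at the two endpoints of $I_j=[\ell_j,h_j]=[\gamma_j/U_j,\gamma_jU_j]$, using Proposition~\ref{prop:tropical-dominance}.

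First, $\|Q_{j,j}\|_2=a_j\gamma_j^{0}/a_j=1$ directly from \eqref{eq:tropical-midpoint-polynomial}. For the decay bound, take $k$ with $a_k>0$; if $a_k=0$ the bound is trivial. Since $j$ is an interior hull vertex, Proposition~\ref{prop:tropical-dominance} gives $a_jr^j=T_P(r)\geq a_kr^k$ for every $r\in[\ell_j,h_j]$.

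For $k>j$, I will evaluate this at $r=h_j=\gamma_jU_j$. This gives $a_k\gamma_j^{k}U_j^{k}\leq a_j\gamma_j^{j}U_j^{j}$. Rearranging, $\|Q_{j,k}\|_2=a_k\gamma_j^{k-j}/a_j\leq U_j^{-(k-j)}$.

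For $k<j$, I will evaluate at $r=\ell_j=\gamma_j/U_j$. This gives $a_k\gamma_j^{k}U_j^{-k}\leq a_j\gamma_j^{j}U_j^{-j}$, hence $\|Q_{j,k}\|_2\leq U_j^{k-j}=U_j^{-|k-j|}$.

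This choice of endpoint according to the side of $j$ is the only real content of the proof, and it is why scaling at the geometric mean yields symmetric decay.

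For the second part, note that $\|Q_{j,k}\|_2r^k=a_k(\gamma_jr)^k/(a_j\gamma_j^j)$. Hence $T_{Q_j}(r)=T_P(\gamma_jr)/(a_j\gamma_j^j)$, and likewise $S_{Q_j}(r)=S_P(\gamma_jr)/(a_j\gamma_j^j)$, consistent with \eqref{eq:scaled-polynomial-weight}.

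For $U_j^{-1}\leq r\leq U_j$, we have $\gamma_jr\in I_j$, so $T_P(\gamma_jr)=a_j\gamma_j^jr^j$. Therefore $T_{Q_j}(r)=r^j$.

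The sandwich $r^j\leq S_{Q_j}(r)\leq(d+1)r^j$ then follows from \eqref{eq:tropical-sum-comparison} applied to $Q_j$, which holds for any polynomial since $S$ is a sum of $d+1$ terms each bounded by their maximum.

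No step is a genuine obstacle. The only care needed is the bookkeeping of exponents when dividing by $\gamma_j^j$, and the case $a_k=0$.
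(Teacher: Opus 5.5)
Your proof is correct and follows the paper's argument essentially step by step. You evaluate the dominance inequality $a_kr^k\le a_jr^j$ at $r=\ell_j$ for $k<j$ and at $r=h_j$ for $k>j$, and dispose of $a_k=0$ trivially. You then get $T_{Q_j}(r)=r^j$ from $\gamma_jr\in I_j$ and the sandwich from $T\le S\le(d+1)T$, which is exactly how the paper proceeds.
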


\begin{proof}
For $k<j$, dominance at $\ell_j$ gives
\[
    a_k\ell_j^k\leq a_j\ell_j^j.
\]
Consequently,
\[
    \|Q_{j,k}\|_2
    =\frac{a_k}{a_j}\gamma_j^{k-j}
    \leq
    \left(\frac{\ell_j}{\gamma_j}\right)^{j-k}
    =U_j^{-(j-k)}.
\]
For $k>j$, dominance at $h_j$ similarly gives
\[
    \|Q_{j,k}\|_2
    \leq
    \left(\frac{\gamma_j}{h_j}\right)^{k-j}
    =U_j^{-(k-j)}.
\]
These inequalities also hold when $a_k=0$.
For $k=j$, the normalization gives
$\|Q_{j,j}\|_2=1$.

If $U_j^{-1}\leq r\leq U_j$, then
$\gamma_jr\in I_j$. Therefore
\[
    T_{Q_j}(r)
    =\frac{T_P(\gamma_jr)}{a_j\gamma_j^j}
    =r^j.
\]
The bounds for $S_{Q_j}(r)$ follow from
\eqref{eq:tropical-sum-comparison}.
\end{proof}

\begin{example}
We return to the degree-five polynomial with coefficient norms
\[
    (a_0,\ldots,a_5)=(1,10,10^4,10^5,10^6,10^4).
\]
Its interior hull vertices are $j=2$ and $j=4$, with
dominance intervals
\[
    I_2=[10^{-2},10^{-1}],
    \qquad
    I_4=[10^{-1},10^2].
\]
The midpoint scales and width parameters are
\[
    \gamma_2=10^{-3/2},\qquad U_2=10^{1/2},
    \qquad
    \gamma_4=10^{1/2},\qquad U_4=10^{3/2}.
\]
For the scaled polynomials
\[
    Q_j(t)=\frac{P(\gamma_jt)}{a_j\gamma_j^j},
\]
the logarithms of the coefficient norms are
\begin{align*}
    \bigl(\log_{10}\|Q_{2,k}\|_2\bigr)_{k=0}^{5}
    &=\left(-1,-\frac32,0,-\frac12,-1,-\frac92\right),\\
    \bigl(\log_{10}\|Q_{4,k}\|_2\bigr)_{k=0}^{5}
    &=\left(-8,-\frac{13}{2},-3,-\frac32,0,-\frac32\right).
\end{align*}
In both cases, the dominant coefficient has norm one.
The decay bound becomes
\[
    \log_{10}\|Q_{j,k}\|_2
    \leq -|k-j|\log_{10}U_j.
\]
Figure~\ref{fig:tropical-coefficient-decay} compares
the coefficient heights with this bound. The wider
dominance interval for $j=4$ gives a larger $U_j$
and hence a steeper decay bound.


\begin{figure}[htbp]
\centering
\begin{tikzpicture}[
    x=0.85cm,
    y=0.48cm,
    >=stealth,
    font=\small,
    hull/.style={blue!70!black,thick},
    bound/.style={black!65,dashed,thick},
    guide/.style={gray!25,thin},
    vertex/.style={blue!70!black},
    other/.style={red!75!black}
]

\node[rotate=90] at (-1.15,-4)
    {$\log_{10}\|Q_{j,k}\|_2$};

\foreach \shift/\title in {
    0/{j=2,\quad U_2=10^{1/2}},
    7.6/{j=4,\quad U_4=10^{3/2}}
}{
    \begin{scope}[xshift=\shift cm]
        \foreach \b in {-8,-6,-4,-2}{
            \draw[guide] (0,\b) -- (5.3,\b);
            \draw (-0.07,\b) -- (0.07,\b);
            \node[left=3pt] at (0,\b) {$\b$};
        }
        \node[left=3pt] at (0,0) {$0$};

        \draw[->] (-0.15,0) -- (5.65,0)
            node[right] {$k$};
        \draw[->] (0,-8.7) -- (0,1.0);

        \foreach \k in {0,...,5}{
            \draw (\k,-0.08) -- (\k,0.08);
            \node[above=3pt] at (\k,0) {$\k$};
        }

        \node at (2.65,1.8) {$\title$};
    \end{scope}
}

\begin{scope}
    \draw[bound]
        (0,-1) -- (2,0) -- (5,-1.5);

    \draw[hull]
        (0,-1) -- (2,0) -- (4,-1) -- (5,-4.5);

    \foreach \k/\b in {0/-1,2/0,4/-1,5/-4.5}
        \fill[vertex] (\k,\b) circle (2.5pt);

    \foreach \k/\b in {1/-1.5,3/-0.5}
        \fill[other] (\k,\b) circle (2.3pt);
\end{scope}

\begin{scope}[xshift=7.6cm]
    \draw[bound]
        (0,-6) -- (4,0) -- (5,-1.5);

    \draw[hull]
        (0,-8) -- (2,-3) -- (4,0) -- (5,-1.5);

    \foreach \k/\b in {0/-8,2/-3,4/0,5/-1.5}
        \fill[vertex] (\k,\b) circle (2.5pt);

    \foreach \k/\b in {1/-6.5,3/-1.5}
        \fill[other] (\k,\b) circle (2.3pt);
\end{scope}

\draw[hull] (1.0,-9.7) -- (1.6,-9.7);
\node[right] at (1.7,-9.7) {Scaled upper hull};

\draw[bound] (7.0,-9.7) -- (7.6,-9.7);
\node[right] at (7.7,-9.7) {Decay bound};

\fill[other] (3.7,-10.7) circle (2.3pt);
\node[right] at (3.9,-10.7) {Other coefficient points};

\end{tikzpicture}
\caption{Midpoint scaling for the degree-five example.
The dashed envelope is $-|k-j|\log_{10}U_j$.
Every scaled coefficient height lies on or below it.
Both panels use the same vertical scale.}
\label{fig:tropical-coefficient-decay}
\end{figure}

Scaling changes the coefficient heights by an affine
function of $k$:
\[
    \log_{10}\|Q_{j,k}\|_2
    =\log_{10}a_k-\log_{10}a_j
      +(k-j)\log_{10}\gamma_j.
\]
Thus, it preserves the hull vertex indices and places
the dominant vertex at height zero. Points on the hull
segments adjacent to $j$ attain the decay bound;
the remaining points lie strictly below it.
\end{example}

These coefficient bounds guide the pencil construction.
For each interior hull vertex, we construct two pencils for the same midpoint-scaled polynomial.
The first is used on the upper half annulus $\mathcal R_j^+$.
The second is obtained by reversal and is used on the lower half annulus $\mathcal R_j^-$. 
On these regions, both residual amplification factors have bounds depending only on $d$. 
We analyze the recovery norm ratios separately.

When $s\geq2$, the interior dominance annuli together cover $\alpha_1\leq|\lambda|\leq\alpha_s$.
To cover the remaining radii, we extend the first lower-half pencil to
\[
    0<|\lambda|\leq\sqrt{\alpha_1\alpha_2},
\]
and the last upper-half pencil to
\[
    |\lambda|\geq\sqrt{\alpha_{s-1}\alpha_s}.
\]
The right recovery block changes at $|\lambda|=\alpha_1$ and $|\lambda|=\alpha_s$, respectively; the left recovery block remains the body block.
We will prove that the residual amplification factors remain bounded on these extended regions.
Thus, the endpoint regions require no additional pencils or polynomial scalings.

If $s=1$, there are no interior hull vertices.
We instead use the polynomial scaled at the sole tropical root, as described above. 
One companion pencil suffices for all nonzero radii: we change the right recovery block at $|t|=1$, corresponding to $|\lambda|=\alpha_1$, and keep the body block for left recovery. 
We will prove degree-dependent bounds for both residual amplification factors on $0<|t|\leq1$ and $|t|\geq1$.

\section{A scaled companion pencil}
\label{sec:linearization-construction}

In this section, we construct a scaled companion pencil and establish its one-sided factorizations and eigenvector extraction properties. 
These results will form the basis of our linearization covering.

Let
\[
    Q(t)=\sum_{k=0}^{d}Q_kt^k
    \in\mathbb C[t]^{n\times n},
    \qquad d\geq2,
\]
be regular. 
Fix $1\leq j\leq d-1$ and $U\geq1$.

Set
\[
    p=d-1-j,\qquad q=j,
    \qquad p+q=d-1.
\]
For $p\geq1$, define the $dn\times dn$ pencil $S_{j,U}(t)$ by
\begin{equation}
\label{eq:interior-block-form}
    S_{j,U}(t)=
    \left[
    \begin{array}{cccc|cccc}
        U^p(Q_{d-1}+tQ_d)
        &U^{p-1}Q_{d-2}
        &\cdots
        &UQ_{j+1}
        &Q_j&Q_{j-1}&\cdots&Q_0
        \\ \hline
        I_n&-\frac{t}{U}I_n&&&&&&\\
        &I_n&\ddots&&&&&\\
        &&\ddots&-\frac{t}{U}I_n&&&&\\
        &&&I_n&-\frac{t}{U}I_n&&&\\[2pt] \hline
        &&&&-\frac1U I_n&\frac{t}{U}I_n&&\\[3pt]
        &&&&&-\frac1U I_n&\ddots&\\
        &&&&&&\ddots&\frac{t}{U}I_n
    \end{array}
    \right].
\end{equation}
Blank entries are zero. 
The vertical divider separates the $p$ columns indexed by $d-1,\ldots,j+1$ from the $q+1$ columns indexed by $j,\ldots,0$.
The two groups of auxiliary rows contain $p$ and $q$ rows, respectively.
The first block row is the \emph{body row}; the remaining rows are the \emph{auxiliary rows}.

The dotted patterns are shortened for small values of $p$ or $q$. 
To specify all cases precisely, number block rows and columns from $1$ to $d$. 
The body entries are
\begin{equation}
\label{eq:interior-body-entries}
    (S_{j,U}(t))_{1b}
    =
    \begin{cases}
        U^p(Q_{d-1}+tQ_d),&b=1,\\
        U^{\max\{d-b-j,0\}}Q_{d-b},&2\leq b\leq d,
    \end{cases}
\end{equation}
and the auxiliary entries are
\begin{equation}
\label{eq:interior-auxiliary-entries}
    \bigl((S_{j,U}(t))_{a,a-1},
          (S_{j,U}(t))_{a,a}\bigr)
    =
    \begin{cases}
        \bigl(I_n,-(t/U)I_n\bigr),
            &2\leq a\leq p+1,\\
        \bigl(-U^{-1}I_n,(t/U)I_n\bigr),
            &p+2\leq a\leq d.
    \end{cases}
\end{equation}
All other blocks are zero.

When $p=0$, all auxiliary rows have blocks $-U^{-1}I_n$ and $(t/U)I_n$, and the pencil takes the form
\[
    S_{d-1,U}(t)=
    \begin{bmatrix}
        Q_{d-1}+tQ_d&Q_{d-2}&\cdots&Q_0\\
        -\frac1U I_n&\frac{t}{U}I_n&&\\
        &-\frac1U I_n&\ddots&\\
        &&\ddots&\frac{t}{U}I_n
    \end{bmatrix}.
\]

\begin{proposition}
\label{prop:interior-strong-linearization}
The pencil $S_{j,U}$ is a strong linearization of $Q$.
\end{proposition}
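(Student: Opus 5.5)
The plan is to show that $S_{j,U}$ is \emph{strictly equivalent} to the companion pencil $C_Q$ of \eqref{eq:first-companion-pencil}. That is, I will find constant nonsingular block-diagonal matrices $A=\operatorname{diag}(\alpha_aI_n)$ and $B=\operatorname{diag}(\beta_bI_n)$ with
\[
    A\,S_{j,U}(t)\,B=C_Q(t).
\]
Constant invertible matrices are unimodular, so composing with the unimodular equivalence between $C_Q$ and $\operatorname{diag}(Q,I_{(d-1)n})$ shows that $S_{j,U}$ is a linearization of $Q$. Reversal commutes with constant equivalences: $\operatorname{rev}_1(AS_{j,U}B)=A(\operatorname{rev}_1S_{j,U})B$. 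Since $C_Q$ is strong, $\operatorname{rev}_1S_{j,U}$ is then equivalent to $\operatorname{rev}_1C_Q$, which linearizes $\operatorname{rev}_dQ$. This gives strength, in line with Definition~\ref{def:strong-linearization} and the argument of Lemma~\ref{lem:linearization-scaling-reversal}.

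The column scaling is forced by the auxiliary rows. In $C_Q$, block row $a\geq2$ has entries $(-I_n,tI_n)$ in columns $(a-1,a)$.

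In $S_{j,U}$, row $a$ with $2\leq a\leq p+1$ has $(I_n,-(t/U)I_n)$, and after column scaling it becomes $(\beta_{a-1}I_n,-(t/U)\beta_aI_n)$. This is proportional to $(-I_n,tI_n)$ exactly when $\beta_{a-1}=\beta_a/U$.

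Rows $a\geq p+2$ have $(-U^{-1}I_n,(t/U)I_n)$, which is already $U^{-1}(-I_n,tI_n)$. These rows require $\beta_{a-1}=\beta_a$.

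So I take
\[
    \beta_b=1\ \ (b\geq p+1),
    \qquad
    \beta_b=U^{-(p+1-b)}\ \ (1\leq b\leq p+1).
\]
Each auxiliary row then equals a nonzero scalar times $(-I_n,tI_n)$: the scalar is $-\beta_{a-1}=-U^{-(p+2-a)}$ in the first group and $U^{-1}$ in the second. I choose $\alpha_a$ as the reciprocal of that scalar, and $\alpha_1=1$.

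Finally, I check the body row using \eqref{eq:interior-body-entries}.
\begin{itemize}
    \item For $2\leq b\leq p+1$, the exponent is $d-b-j=p+1-b$, so $U^{p+1-b}\beta_b=1$.
    \item For $b=1$, the factor is $U^p\beta_1=U^pU^{-p}=1$.
    \item For $b\geq p+2$, both the power of $U$ and $\beta_b$ equal $1$.
\end{itemize}
Hence the body row becomes $[Q_{d-1}+tQ_d,\ Q_{d-2},\ldots,Q_0]$, exactly the first block row of $C_Q$. The case $p=0$ is included: all $\beta_b=1$, and only the row scaling by $U$ is needed.

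The only delicate point is this index bookkeeping, namely matching the exponent $\max\{d-b-j,0\}$ with the column scaling at the boundary column $b=p+1$ between the two groups, and handling short groups with $p$ or $q$ small. Once the diagonal equivalence is verified, the strong linearization property follows immediately.
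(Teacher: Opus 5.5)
Your proof is correct and follows the paper's argument: the paper also applies the constant diagonal column scalings $U^{-\max\{h-j,0\}}$ (your $\beta_b$ with $h=d-b$) and matching auxiliary-row scalings ($-U^{h-j}$ or $U$, your $\alpha_a$) to turn $S_{j,U}$ into $C_Q$. Your remark that constant equivalences commute with $\operatorname{rev}_1$ makes explicit the step the paper leaves implicit when it concludes that the strong property carries over.
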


\begin{proof}
Multiply block column $h$ by $U^{-\max\{h-j,0\}}$, for $0\leq h\leq d-1$.
The body row becomes
\[
    [Q_{d-1}+tQ_d,\ Q_{d-2},\ldots,Q_0].
\]
For each auxiliary row indexed by $h$, multiply by $-U^{h-j}$ if $h>j$, and by $U$ if $h\leq j$.
Its two nonzero blocks then become $[-I_n,tI_n]$.

These constant invertible row and column scalings transform $S_{j,U}$ into the Frobenius companion pencil $C_Q$. Since $C_Q$ is a strong linearization of $Q$, so is $S_{j,U}$.
\end{proof}

\begin{example}[Degree-five pencils]
\label{ex:degree-five-pencils}
Let
\[
    Q(t)=\sum_{k=0}^{5}Q_kt^k
\]
be regular, and fix $U\geq1$.

For $j=2$, we have $p=q=2$, and the construction gives
\[
    S_{2,U}(t)=
    \begin{bmatrix}
        U^2(Q_4+tQ_5)&UQ_3&Q_2&Q_1&Q_0\\
        I_n&-\frac{t}{U}I_n&0&0&0\\
        0&I_n&-\frac{t}{U}I_n&0&0\\
        0&0&-\frac1U I_n&\frac{t}{U}I_n&0\\
        0&0&0&-\frac1U I_n&\frac{t}{U}I_n
    \end{bmatrix}.
\]
Both types of auxiliary row appear twice.

For $j=4$, we have $p=0$ and $q=4$, giving
\[
    S_{4,U}(t)=
    \begin{bmatrix}
        Q_4+tQ_5&Q_3&Q_2&Q_1&Q_0\\
        -\frac1U I_n&\frac{t}{U}I_n&0&0&0\\
        0&-\frac1U I_n&\frac{t}{U}I_n&0&0\\
        0&0&-\frac1U I_n&\frac{t}{U}I_n&0\\
        0&0&0&-\frac1U I_n&\frac{t}{U}I_n
    \end{bmatrix}.
\]
Here every auxiliary row has blocks $-U^{-1}I_n$ and $(t/U)I_n$.

Both pencils are strong linearizations of the same polynomial $Q$, by Proposition~\ref{prop:interior-strong-linearization}.
\end{example}

\subsection{One-sided factorizations}
\label{sec:one-sided-factorizations}

Write $S(t)=S_{j,U}(t)$, and let
\[
    E_h=e_{d-h}^{(d)}\otimes I_n,
    \qquad
    E_{\rm b}=e_1^{(d)}\otimes I_n.
\]
These matrices select block column $h$ and the body row,
respectively.

\begin{proposition}
\label{prop:interior-one-sided-factorizations}
For $t\neq0$ and $0\leq h\leq d-1$, there are matrix
functions $F_h(t)$ and $G_h(t)$ such that
\begin{align}
\label{eq:interior-all-right-factorizations}
    G_h(t)S(t)&=\beta_h(t)Q(t)E_h^*,\\
\label{eq:interior-all-left-factorizations}
    S(t)F_h(t)&=E_{\rm b}\,\beta_h(t)Q(t),
\end{align}
where
\[
    \beta_h(t)=U^{\max\{h-j,0\}}t^{-h}.
\]
They satisfy
\[
    E_h^*F_h(t)=I_n,
    \qquad
    G_h(t)E_{\rm b}=I_n.
\]
\end{proposition}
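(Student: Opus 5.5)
Both factorizations come from the explicit shape of the null vectors of the pencil; we then solve for the auxiliary parts of $F_h$ and $G_h$ block by block. Index block columns by $h=d-1,\ldots,0$, so column position $b$ carries index $h=d-b$. The row scalings in Proposition~\ref{prop:interior-strong-linearization} suggest the right null-type vector
\[
    \Lambda(t)=\bigl[\,c_{d-1}(t)I_n;\ \ldots;\ c_0(t)I_n\,\bigr],
    \qquad
    c_h(t)=U^{-\max\{h-j,0\}}t^h .
\]
The auxiliary rows annihilate $\Lambda$. An upper row $(I_n,-(t/U)I_n)$ acting on the indices $h+1,h$ with $h\geq j$ requires $c_{h+1}=(t/U)c_h$. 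A lower row $(-U^{-1}I_n,(t/U)I_n)$ acting on $h+1,h$ with $h<j$ requires $c_{h+1}=tc_h$. Both hold for the $c_h$ above. The body row then gives
\[
    \sum_{k} U^{\max\{k-j,0\}}Q_k\,U^{-\max\{k-j,0\}}t^k = Q(t),
\]
where the $Q_d$ term from $tQ_d$ in position $1$ produces $U^{p}U^{-(d-1-j)}t^{d}=t^d$. Hence
\[
    S(t)\Lambda(t)=E_{\rm b}Q(t).
\]
Setting $F_h=\Lambda/c_h$ gives $S F_h=E_{\rm b}\beta_hQ$ with $\beta_h=1/c_h=U^{\max\{h-j,0\}}t^{-h}$, and $E_h^*F_h=I_n$ by construction. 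This needs $t\ne0$.

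For the right factorization we look for $G_h=[\,I_n,\ g_{d-1},\ldots\,]$, one block per block row, with body coefficient $I_n$ so that $G_hE_{\rm b}=I_n$. We require $G_hS=\beta_hQ\,E_h^*$, so every block column of $G_hS$ other than column $h$ must vanish. Each block column of $S$ has at most three nonzero blocks: the body entry, the entry of the auxiliary row whose diagonal sits in that column, and the subdiagonal entry of the next auxiliary row. The column conditions therefore form a two-term recurrence for the auxiliary coefficients. We run it from the first column downward until column $h$, and from the last column upward until column $h$. This is the standard Horner-shift construction of Higham--Li--Tisseur, rescaled.

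From the top, the coefficients are Horner partial sums of the form $-\sum_{k>m}(\text{power of }t/U\text{ or }t)\,Q_k$ up to scalings. From the bottom, they are partial sums of $Q_0,\ldots$ multiplied by negative powers of $t$ divided by the subdiagonal factors. The latter is where $t\ne0$ is used again. Column $h$ then receives one contribution from each side. To avoid recomputing it, multiply $G_hS=\Phi E_h^*$ on the right by $\Lambda$ and use the left identity $S\Lambda=E_{\rm b}Q$. This gives
\[
    G_hE_{\rm b}Q=\Phi\,c_h,
    \qquad\text{so}\qquad
    \Phi=c_h^{-1}Q=\beta_hQ,
\]
using $G_hE_{\rm b}=I_n$.

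The main obstacle is the bookkeeping that the recurrences can always be solved. The column equations must be uniquely solvable in the auxiliary coefficients, and this depends on the relevant auxiliary diagonal or subdiagonal block being a nonzero multiple of $I_n$. The two row types at the interface $h=j$, and the edge cases $p=0$ and $h\in\{0,d-1\}$, need care. We handle this by showing that, for each column other than $h$, the equations form a triangular system with invertible pivots $I_n$, $-U^{-1}I_n$, $-(t/U)I_n$ or $(t/U)I_n$, all invertible for $t\ne0$. The identity $\Phi=\beta_hQ$ then follows by the $\Lambda$ argument, without computing the entry of column $h$ explicitly.
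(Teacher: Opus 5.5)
Your proposal is correct and follows essentially the paper's route. You use the same explicit $F_h$ built from the scalars $c_k$ (yours differ from the paper's only by the factor $t^{j}$, which cancels in $\Lambda/c_h$), a $G_h$ with leading block $I_n$ chosen to annihilate every block column except $h$, and you identify the remaining block as $\beta_h Q$ by multiplying on the right by the null-type vector. The one difference is in how solvability is shown. The paper proves that the auxiliary rows with column $h$ deleted form an invertible matrix $B_h$, since a kernel vector would give an auxiliary solution whose block $h$ vanishes. You instead carry out the equivalent forward and backward substitution explicitly. One labeling slip: the upward sweep divides by the diagonal blocks $\pm(t/U)I_n$, not the subdiagonal ones, as your final list of pivots already correctly reflects.
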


\begin{proof}
For $t\neq0$, define
\[
    c_k(t)=
    \begin{cases}
        (t/U)^{k-j}, & k>j,\\
        t^{k-j}, & k\leq j.
    \end{cases}
\]
Then
\[
    F_h(t)=
    \begin{bmatrix}
        \dfrac{c_{d-1}(t)}{c_h(t)}I_n\\
        \vdots\\
        \dfrac{c_0(t)}{c_h(t)}I_n
    \end{bmatrix}
    =
    U^{\max\{h-j,0\}}t^{j-h}
    \begin{bmatrix}
        (t/U)^{d-1-j}I_n\\
        \vdots\\
        (t/U)I_n\\
        I_n\\
        t^{-1}I_n\\
        \vdots\\
        t^{-j}I_n
    \end{bmatrix}.
\]
The blocks above $I_n$ are absent when $j=d-1$.
Equivalently, the block indexed by $k$ is
\[
    (F_h(t))_k
    =
    U^{\max\{h-j,0\}-\max\{k-j,0\}}
    t^{k-h}I_n,
    \qquad 0\leq k\leq d-1.
\]

The blocks of $F_h(t)$ satisfy the auxiliary equations,
and its block indexed by $h$ is $I_n$. Hence
\[
    E_h^*F_h(t)=I_n.
\]
Substitution into the body row gives
\[
    S(t)F_h(t)
    =E_{\rm b}\frac{t^{-j}}{c_h(t)}Q(t)
    =E_{\rm b}\,\beta_h(t)Q(t),
\]
where
\[
    \beta_h(t)
    =\frac{t^{-j}}{c_h(t)}
    =U^{\max\{h-j,0\}}t^{-h}.
\]

For the other factorization, let $B_h(t)$ be the
matrix obtained by deleting column $h$ from the
auxiliary rows, and let $C_h(t)$ be the body row
with the same column deleted.

The matrix $B_h(t)$ is invertible. Indeed, a vector
in its kernel extends to an auxiliary solution with
block $h$ equal to zero. The auxiliary equations then
force every block to vanish.

Set
\[
    G_h(t)=[I_n,-C_h(t)B_h(t)^{-1}].
\]
Then $G_h(t)S(t)$ vanishes in every column except $h$.
Since $E_h^*F_h(t)=I_n$, its remaining block is
\[
    G_h(t)S(t)F_h(t)=\beta_h(t)Q(t).
\]
This proves the first factorization.
The first block of $G_h(t)$ is $I_n$, giving
$G_h(t)E_{\rm b}=I_n$.
\end{proof}

\subsection{Eigenvector extraction}

The factorizations identify the available extraction
blocks. We choose which right block to use on each
region later.

\begin{corollary}
\label{prop:interior-eigenvector-recovery}
Let $t\neq0$. For every nonzero right pencil eigenvector
$v$, each block
\[
    x=E_h^*v=v_h,\qquad 0\leq h\leq d-1,
\]
is a nonzero right eigenvector of $Q$.
For every nonzero left pencil eigenvector $w$, the
body block
\[
    y^*=w^*E_{\rm b}
\]
is a nonzero left eigenvector of $Q$.
Moreover,
\[
    v=F_h(t)x,\qquad w^*=y^*G_h(t).
\]
\end{corollary}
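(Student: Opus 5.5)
We first identify the exact right eigenvectors of $S(t)$ for $t\neq0$, then derive all claims from the one-sided factorizations of Proposition~\ref{prop:interior-one-sided-factorizations}.

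\textbf{Right eigenvectors.} Let $S(t)v=0$ with $v\neq0$. The auxiliary rows form $d-1$ equations. Each equation links two consecutive blocks $v_{k+1}$ and $v_k$, using the coefficients $I_n,-(t/U)I_n$ or $-U^{-1}I_n,(t/U)I_n$. For $t\neq0$, both coefficients are invertible, so each equation determines either block from the other.

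Hence the auxiliary equations force $v=F_h(t)v_h$ for any fixed $h$. This holds because $F_h(t)v_h$ solves the auxiliary equations and has block $h$ equal to $v_h$, and the auxiliary solution with prescribed block $h$ is unique. This is the same argument used to prove $B_h(t)$ invertible.

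If $v_h=0$, then $v=F_h(t)\cdot0=0$, a contradiction. So every block $x=v_h$ is nonzero, and $v=F_h(t)x$. The body row then gives
\[
    0=E_{\rm b}^*S(t)F_h(t)x=\beta_h(t)Q(t)x,
\]
by the left factorization~\eqref{eq:interior-all-left-factorizations}. Since $\beta_h(t)\neq0$ for $t\neq0$, we get $Q(t)x=0$. Alternatively, apply Proposition~\ref{prop:one-sided-eigenvector-recovery} with $H_{\rm R}=G_h/\beta_h$.

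\textbf{Left eigenvectors.} Let $w^*S(t)=0$, $w\neq0$, and set $y^*=w^*E_{\rm b}$. Write $w^*=[y^*,\ w_{\rm a}^*]$, where $w_{\rm a}$ collects the auxiliary blocks. The columns of $S(t)$ other than column $h$ give
\[
    y^*C_h(t)+w_{\rm a}^*B_h(t)=0,
\]
so $w_{\rm a}^*=-y^*C_h(t)B_h(t)^{-1}$, using that $B_h(t)$ is invertible. Therefore $w^*=y^*[I_n,\,-C_hB_h^{-1}]=y^*G_h(t)$.

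If $y=0$, this gives $w=0$, a contradiction, so $y\neq0$. Multiplying~\eqref{eq:interior-all-right-factorizations} on the left by $y^*$ gives
\[
    0=w^*S(t)=y^*G_h(t)S(t)=\beta_h(t)\,y^*Q(t)E_h^*.
\]
Since $E_h^*$ has full row rank and $\beta_h(t)\neq0$, we conclude $y^*Q(t)=0$.

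\textbf{Where the care lies.} The only delicate step is the uniqueness claim behind $v=F_h(t)v_h$. At the junction row $a=p+1$, the coefficients are $I_n$ and $-(t/U)I_n$, and the rows below have $-U^{-1}I_n$ and $(t/U)I_n$. In every row both blocks are nonzero multiples of $I_n$ when $t\neq0$, so propagation works in both directions from block $h$. The left recovery then follows directly from the invertibility of $B_h(t)$, which was already established.
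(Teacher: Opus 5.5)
Your proposal is correct and follows the paper's proof: the auxiliary equations force $v=F_h(t)x$, and the columns other than $h$, together with the invertibility of $B_h(t)$, force $w^*=y^*G_h(t)$, which gives nonvanishing of both recovered blocks. The one inessential difference is that you derive $Q(t)x=0$ from the $F_h$ factorization and $y^*Q(t)=0$ from the $G_h$ factorization (using that $E_h^*$ has full row rank), whereas the paper's setup in Proposition~\ref{prop:one-sided-eigenvector-recovery} pairs the $G_h$ factorization with right recovery and the $F_h$ factorization with left recovery; both pairings are valid since $\beta_h(t)\neq0$.
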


\begin{proof}
The factorizations give $Q(t)x=0$ and $y^*Q(t)=0$,
since $\beta_h(t)\neq0$.

The right auxiliary equations give $v=F_h(t)x$,
so $x=0$ would force $v=0$.
On the left, the equations in the columns other than
$h$ give
\[
    w^*=y^*[I_n,-C_h(t)B_h(t)^{-1}]
        =y^*G_h(t).
\]
Thus $y=0$ would force $w=0$.
\end{proof}

\section{Building the linearization covering}

We now use the scaled companion construction to build a family of linearizations whose assigned regions cover the nonzero complex plane.

For each interior hull vertex, we construct two pencils for its midpoint-scaled polynomial: a direct pencil for the upper half of the dominance annulus and a reversed pencil for the lower half. Here, upper and lower refer to the modulus relative to the midpoint scale.

By choosing suitable right recovery blocks, we extend the first lower-half pencil and the last upper-half pencil to cover the endpoint regions.
No additional pencils or polynomial scalings are needed. When there is only one tropical root, a single scaled companion pencil suffices.

\subsection{Pencils for the dominance annuli}
\label{sec:dominance-annulus-pencils}

Suppose $s\geq2$, and let $j=v_i$ be an interior hull vertex.
Recall the midpoint-scaled polynomial
\[
    Q_j(t)=\frac{P(\gamma_jt)}{a_j\gamma_j^j}
          =\sum_{k=0}^{d}Q_{j,k}t^k,
\]
whose coefficients satisfy
\[
    \|Q_{j,j}\|_2=1,
    \qquad
    \|Q_{j,k}\|_2\leq U_j^{-|k-j|}.
\]
The two halves of its dominance annulus correspond to
\[
    U_j^{-1}\leq|t|\leq1,
    \qquad
    1\leq|t|\leq U_j.
\]
We use a direct pencil on the upper half and a reversed
pencil on the lower half.

Define
\begin{equation}
\label{eq:upper-original-pencil}
    L_j^+(\lambda)
    =S_{j,U_j}(\lambda/\gamma_j;Q_j).
\end{equation}
This is a strong linearization of $P$.
On
\[
    \mathcal R_j^+
    =\{\lambda:\gamma_j\leq|\lambda|\leq h_j\},
\]
We recover the right vector from block position $d-j$ and the left vector from block position $1$.

Reverse the scaled polynomial:
\[
    \widetilde Q_j(s)
    =\operatorname{rev}_dQ_j(s)
    =\sum_{k=0}^{d}Q_{j,d-k}s^k.
\]
Its coefficients satisfy
\[
    \|\widetilde Q_{j,d-j}\|_2=1,
    \qquad
    \|\widetilde Q_{j,k}\|_2
    \leq U_j^{-|k-(d-j)|}.
\]
Set
\[
    \widetilde S_j(s)
    =S_{d-j,U_j}(s;\widetilde Q_j)
\]
and define
\begin{equation}
\label{eq:lower-original-pencil}
    L_j^-(\lambda)
    =\frac{\lambda}{\gamma_j}
       \widetilde S_j(\gamma_j/\lambda).
\end{equation}
If $\widetilde S_j(s)=sX+Y$, then
\[
    L_j^-(\lambda)=X+\frac{\lambda}{\gamma_j}Y,
\]
so this expression is a pencil. Reversal and scaling
show that it is a strong linearization of $P$.

On
\[
    \mathcal R_j^-
    =\{\lambda:\ell_j\leq|\lambda|\leq\gamma_j\},
\]
the variable $s=\gamma_j/\lambda$ satisfies
$1\leq|s|\leq U_j$.
We recover the right vector from block position $j$
and the left vector from block position $1$.

The recovery positions are therefore

\begin{table}[htbp]
\centering
\begin{tabular}{ccc}
\toprule
Pencil & Right recovery position & Left recovery position\\
\midrule
$L_j^+$ & $d-j$ & $1$\\
$L_j^-$ & $j$   & $1$\\
\bottomrule
\end{tabular}
\caption{Recovery block positions, numbered from $1$ to $d$.}
\label{tab:recovery-positions}
\end{table}

We next bound the residual amplification factors for
these choices. The recovery norm ratios will be
considered separately.

\begin{proposition}[Residual amplification on the half annuli]
\label{prop:half-annulus-residual-amplification}
For every interior hull vertex $j$, the pencils
$L_j^+$ and $L_j^-$, with the recovery positions in
Table~\ref{tab:recovery-positions}, satisfy
\[
    g_{\rm R}^{\pm}(\lambda)\leq A_dB_d,
    \qquad
    g_{\rm L}^{\pm}(\lambda)\leq A_d\sqrt d
\]
on $\mathcal R_j^\pm$, respectively, where
\[
    A_d=\sqrt d+3,
    \qquad
    B_d=
    \left(1+\frac{(d-1)d(2d-1)}6\right)^{1/2}.
\]
The bounds include the boundaries and depend only on $d$.
\end{proposition}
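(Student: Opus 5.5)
The plan is to reduce both cases to one computation for $S=S_{j,U}(t;Q)$ on the annulus $1\le|t|\le U$, where $Q$ is a polynomial with
\[
\|Q_j\|_2=1,\qquad \|Q_k\|_2\le U^{-|k-j|}.
\]

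\emph{Reduction.} For $L_j^+$, set $t=\lambda/\gamma_j$ and $Q=Q_j$, $U=U_j$. Lemma~\ref{lem:scaling-reversal-backward-error} and the remark following Lemma~\ref{lem:linearization-scaling-reversal} show that $g_{\rm R},g_{\rm L}$ computed for $(S,Q)$ at $t$ equal those for $(L_j^+,P)$ at $\lambda$. This holds because the weights $W_L(r)=\|X\|_2r+\|Y\|_2$ and $S_P$ transform by the same scalar factors under scaling.

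For $L_j^-$, set $s=\gamma_j/\lambda$ and use $\widetilde Q_j=\operatorname{rev}_dQ_j$ with dominant index $d-j$. Its coefficient bounds have the same form. The factor $\lambda/\gamma_j$ in \eqref{eq:lower-original-pencil} multiplies the pencil residual and $W_L$ by the same modulus $|s|^{-1}$. Likewise, reversal multiplies the polynomial residual and $S_P$ by the same modulus $|s|^d$. So the ratios are again unchanged, and it suffices to treat $S_{j',U}(s;\widetilde Q_j)$ with $j'=d-j$ on $1\le|s|\le U$.

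In both cases the right recovery position $d-j$, resp.\ $j=d-j'$, is the block column $h=j$, resp.\ $h=j'$, in the indexing of Proposition~\ref{prop:interior-one-sided-factorizations}. The left position is the body row. So I use \eqref{eq:scalar-multiplier-growth-factors} with $G=G_j$, $F=F_j$ and $h_{\rm R}=h_{\rm L}=\beta_j=t^{-j}$.

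\emph{Pencil weight.} Write $S(t)=tX+Y$. The only nonzero blocks of $X$ are $U^p Q_d$ and the $\pm U^{-1}I_n$ auxiliary blocks. Since $U^p\|Q_d\|_2\le U^{d-1-j}U^{-(d-j)}=U^{-1}$ and $X$ is block diagonal, $\|X\|_2\le U^{-1}$. Hence $\|X\|_2|t|\le1$ on $|t|\le U$.

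The body row of $Y$ has blocks $U^{\max\{k-j,0\}}Q_k$ for $0\le k\le d-1$, each of norm at most $1$, so its norm is at most $\sqrt d$. The auxiliary part of $Y$ is block bidiagonal with blocks of norm at most $1$, so its norm is at most $2$. Thus $W_S(|t|)\le\sqrt d+3=A_d$. The denominator satisfies
\[
|\beta_j(t)|\,S_Q(|t|)\ge|t|^{-j}|t|^j=1 .
\]

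\emph{Left factor.} The blocks of $F_j$ are
\[
U^{-\max\{k-j,0\}}t^{k-j}=
\begin{cases}
(t/U)^{k-j}, & k>j,\\
t^{k-j}, & k\le j .
\end{cases}
\]
Each has modulus at most $1$ when $1\le|t|\le U$, so $\|F_j\|_2\le\sqrt d$. This gives $g_{\rm L}\le A_d\sqrt d$.

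\emph{Right factor, the main obstacle.} I need $\|G_j(t)\|_2\le B_d$, where $G_j=[I_n,-C_jB_j^{-1}]$. I will compute $C_jB_j^{-1}$ explicitly by solving the left equations $y^*G$ column by column, outward from column $j$.

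On the $k>j$ side, the auxiliary rows have blocks $(I,-t/U)$. Eliminating columns $d-1,\dots,j+1$ successively gives Horner-type partial sums. The block attached to the auxiliary row that closes column $k$ is a sum of terms $U^{m-j}Q_m(t/U)^{m-k}$ over the higher indices $m$ it absorbs, where $m$ runs from $k$ to $d-1$, together with the $tQ_d$ contribution. Each such term has norm at most $U^{m-j}U^{-(m-j)}=1$ when $|t|\le U$.

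On the $k<j$ side, the rows have blocks $(-U^{-1},t/U)$. Eliminating from column $0$ upward gives sums of terms of the form $U\,Q_m t^{m-k-1}$ with $m<j$. These satisfy $\|UQ_mt^{m-k-1}\|_2\le U^{1-(j-m)}|t|^{-(k+1-m)}\le 1$ for $|t|\ge1$.

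So the $m$-th block away from the body row is a sum of at most $m$ terms of norm at most $1$, and its norm is at most $m$, with $m$ ranging over $1,\dots,d-1$ across both groups combined. Then
\[
\|G_j\|_2^2\le 1+\sum_{m=1}^{d-1}m^2=B_d^2,
\]
and $g_{\rm R}\le A_dB_d$. The delicate step is the index bookkeeping: I must check that across both auxiliary groups the term counts are exactly $1,\dots,d-1$, not larger. I must also confirm that the mixed powers of $U$ and $t$ produce moduli at most one at both boundaries $|t|=1$ and $|t|=U$. Every estimate above is non-strict, so the bounds hold on the closed half annuli, including their boundaries.
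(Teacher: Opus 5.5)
Your route is the paper's. You reduce to $S_{j,U}$ on $1\le|t|\le U$, bound $W_S\le A_d$, use $|\beta_j(t)|S_Q(|t|)\ge1$, bound $\|F_j\|_2\le\sqrt d$ blockwise, and bound $\|G_j\|_2$ through explicit Horner-type blocks. Your $\|X\|_2\le U^{-1}$ is sharper than the paper's $2/U$. The constant auxiliary part of $Y$ is a single block subdiagonal with norm at most $1$, not a bidiagonal, but your bound $2$ still yields $A_d$.

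\textbf{The gap.} It is in the step you flagged yourself. The term counts are not $1,\ldots,d-1$, and the claim that the $m$-th block away from the body row has at most $m$ terms is false. Write $G_j=[I_n,G_{j,d-1},\ldots,G_{j,1}]$. For $q\le j$ you get $G_{j,q}=-U\sum_{k=0}^{q-1}Q_kt^{-(q-k)}$, which has $q$ terms. For $j<q\le d-1$, however,
\[
G_{j,q}=-\sum_{k=q}^{d-1}U^{k-j}Q_k(t/U)^{k-q}-tU^{d-1-j}Q_d(t/U)^{d-1-q},
\]
which has $d-q+1$ terms. The $tQ_d$ contribution you mention is an extra term in every block of this group, and you did not count it.

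Already the block next to the body row violates the claim. It is $-U^{p}(Q_{d-1}+tQ_d)$, with two terms, when $j<d-1$. When $j=d-1$ it has $d-1$ terms. The counts form the multiset $\{1,\ldots,j\}\cup\{2,\ldots,d-j\}$. This is not $\{1,\ldots,d-1\}$ when $2\le j\le d-2$; for $d=4$, $j=2$ it is $\{1,2,2\}$.

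\textbf{The repair.} The conclusion still holds, but it needs a domination argument rather than an exact count. For $1\le i\le d-1-j$, the block $q=d-i$ has $i+1\le j+i$ terms, since $j\ge1$. Hence
\[
\sum_{q=1}^{j}q^2+\sum_{q=j+1}^{d-1}(d-q+1)^2
=\sum_{q=1}^{j}q^2+\sum_{i=1}^{d-1-j}(i+1)^2
\le\sum_{q=1}^{d-1}q^2 .
\]
The paper records this as the identity that the difference equals $(j-1)(d-j-1)(d+1)\ge0$.

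You should also bound the extra term explicitly. Using $\|Q_d\|_2\le U^{-(d-j)}$, you get $\|tU^{d-1-j}Q_d(t/U)^{d-1-q}\|_2\le(|t|/U)^{d-q}\le1$. With these two additions your argument gives $\|G_j(t)\|_2\le B_d$ and coincides with the paper's proof.
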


\begin{proof}
We first establish the bounds for $S=S_{j,U}$ and a
polynomial $Q$ satisfying
\begin{equation}
\label{eq:interior-coefficient-assumptions}
    \|Q_j\|_2=1,
    \qquad
    \|Q_k\|_2\leq U^{-|k-j|},
    \qquad 0\leq k\leq d.
\end{equation}
Use right recovery from position $d-j$ and left
recovery from position $1$.

Write $r=|t|$ and $S(t)=tX+Y$, with $1\leq r\leq U$.
The $t$-dependent body block and auxiliary part each
have norm at most $U^{-1}$, so $\|X\|_2\leq2/U$.
Every constant body block has norm at most one,
and the constant auxiliary part has norm at most one.
Hence $\|Y\|_2\leq\sqrt d+1$, giving
\begin{equation}
\label{eq:interior-pencil-weight-bound}
    W_S(r)\leq 2r/U+\sqrt d+1\leq A_d.
\end{equation}
Also, $S_Q(r)\geq r^j$.

The one-sided factorizations for these recovery blocks
give
\[
    g_{\rm R}(t)
    =\frac{W_S(r)r^j\|G_j(t)\|_2}{S_Q(r)},
    \qquad
    g_{\rm L}(t)
    =\frac{W_S(r)r^j\|F_j(t)\|_2}{S_Q(r)}.
\]
Every scalar block of $F_j(t)$ has modulus at most
one on this region, so $\|F_j(t)\|_2\leq\sqrt d$.

To bound $G_j$, write
\[
    G_j(t)=[I_n,G_{j,d-1}(t),\ldots,G_{j,1}(t)],
\]
where $G_{j,q}$ corresponds to auxiliary row $q$.
Cancellation of the columns other than $j$ gives
\[
    G_{j,q}(t)
    =
    \begin{cases}
        \displaystyle
        -U\sum_{k=0}^{q-1}Q_kt^{-(q-k)},
        &1\leq q\leq j,\\[6pt]
        \displaystyle
        -\sum_{k=q}^{d-1}
          U^{k-j}Q_k(t/U)^{k-q}
        -tU^{d-1-j}Q_d(t/U)^{d-1-q},
        &j<q\leq d-1.
    \end{cases}
\]
Each summand has norm at most one. For the first case,
this follows from
\[
    U\|Q_k\|_2r^{-(q-k)}
    \leq U^{1-(j-k)}r^{-(q-k)}\leq1.
\]
For the second case, each term in the sum has norm
at most $(r/U)^{k-q}\leq1$. The final term satisfies
\[
    \bigl\|tU^{d-1-j}Q_d(t/U)^{d-1-q}\bigr\|_2
    \leq
    rU^{-1}(r/U)^{d-1-q}
    =(r/U)^{d-q}\leq1.
\]
Therefore,
\[
    \|G_{j,q}(t)\|_2
    \leq
    \begin{cases}
        q, & q\leq j,\\
        d-q+1, & q>j.
    \end{cases}
\]
Consequently,
\[
    \|G_j(t)\|_2^2
    \leq
    1+\sum_{q=1}^{j}q^2
      +\sum_{q=j+1}^{d-1}(d-q+1)^2
    \leq
    1+\sum_{q=1}^{d-1}q^2
    =B_d^2,
\]
where the second inequality follows from
\[
    \sum_{q=1}^{d-1}q^2
    -\sum_{q=1}^{j}q^2
    -\sum_{q=j+1}^{d-1}(d-q+1)^2
    =(j-1)(d-j-1)(d+1)\geq0.
\]
Together with \eqref{eq:interior-pencil-weight-bound}
and $S_Q(r)\geq r^j$, these estimates prove the
direct bounds.

For $L_j^+$, apply these bounds to $Q_j$ with
$t=\lambda/\gamma_j$.
For $L_j^-$, apply them to $\widetilde Q_j$ with
dominant index $d-j$ and $s=\gamma_j/\lambda$.
The respective assigned regions give
$1\leq|t|\leq U_j$ and $1\leq|s|\leq U_j$.
Scaling and reversal preserve the residual amplification
factors, so the same bounds hold in the original variable.
\end{proof}

Since $A_dB_d=O(d^2)$ and $A_d\sqrt d=O(d)$,
both residual amplification factors have bounds
depending only on the degree. No separation condition
on the tropical roots or invertibility assumption
on the coefficient matrices is needed.

\subsection{Extending the endpoint regions}
\label{sec:endpoint-extension}

Suppose $s\geq2$. The half annuli cover the radii
between $\alpha_1$ and $\alpha_s$.
We now extend the first lower-half pencil and the
last upper-half pencil to cover the remaining radii.
The pencils remain fixed; beyond the endpoint roots,
we use right recovery from block position $1$.
Left recovery remains at position $1$ throughout.

\begin{proposition}[Endpoint extension]
\label{prop:endpoint-extension}
For the last interior hull vertex $j=v_{s-1}$,
the pencil $L_j^+$ extends to
\[
    |\lambda|\geq\gamma_j
    =\sqrt{\alpha_{s-1}\alpha_s}.
\]
Use right recovery from position $d-j$ for
$\gamma_j\leq|\lambda|\leq\alpha_s$, and from
position $1$ for $|\lambda|\geq\alpha_s$.

For the first interior hull vertex $j=v_1$,
the pencil $L_j^-$ extends to
\[
    0<|\lambda|\leq\gamma_j
    =\sqrt{\alpha_1\alpha_2}.
\]
Use right recovery from position $j$ for
$\alpha_1\leq|\lambda|\leq\gamma_j$, and from
position $1$ for $0<|\lambda|\leq\alpha_1$.

With left recovery from position $1$ in both cases,
the residual amplification factors satisfy
\[
    g_{\rm R}(\lambda)\leq A_dB_d,
    \qquad
    g_{\rm L}(\lambda)\leq A_d\sqrt d
\]
on the extended regions, with the same constants
as in Proposition~\ref{prop:half-annulus-residual-amplification}.
At an endpoint root, either adjacent right recovery
rule may be used.
\end{proposition}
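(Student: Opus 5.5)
The plan is to reduce to the scaled pencil $S=S_{j,U}$ for a polynomial $Q$ satisfying \eqref{eq:interior-coefficient-assumptions}, exactly as in the proof of Proposition~\ref{prop:half-annulus-residual-amplification}. On $1\leq|t|\leq U$ that proposition already gives the bounds with right recovery from position $d-j$. It therefore remains to treat $r=|t|\geq U$ with right recovery from position $1$, which is block column $h=d-1$.

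At the endpoint root $r=U$ both rules are covered: the old one by the previous proposition, the new one by the argument below. The lower extension then follows by applying the direct result to $\widetilde Q_j=\operatorname{rev}_dQ_j$ with dominant index $d-j$ and $s=\gamma_j/\lambda$. The region $0<|\lambda|\leq\alpha_1$ becomes $|s|\geq U_j$, and Lemmas~\ref{lem:scaling-reversal-backward-error} and~\ref{lem:linearization-scaling-reversal} transfer the bounds back to the original variable.

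The first key input is a lower bound for $S_Q(r)$ that is sharper than $r^j$. Since $j=v_{s-1}$ is the last interior hull vertex, $d=v_s$ is the next hull vertex, and its term attains the maximum at $\alpha_s$. After midpoint scaling this gives $\|Q_d\|_2=U^{-(d-j)}$ exactly, so $S_Q(r)\geq U^{-(d-j)}r^d$. In the reversed case, $\|\widetilde Q_{j,d}\|_2=\|Q_{j,0}\|_2=U_j^{-j}$ exactly, because $v_0=0$ and $v_1=j$ are adjacent hull vertices.

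For the pencil weight I will use $W_S(r)\leq 2r/U+\sqrt d+1\leq A_d\,r/U$, valid since $r/U\geq1$. With $\beta_{d-1}(t)=U^{d-1-j}t^{-(d-1)}$ and \eqref{eq:scalar-multiplier-growth-factors}, the powers of $r$ and $U$ cancel:
\[
g_{\rm R}\leq \frac{A_d (r/U)\,\|G_{d-1}(t)\|_2\,r^{d-1}U^{-(d-1-j)}}{U^{-(d-j)}r^d}=A_d\|G_{d-1}(t)\|_2 .
\]
For the left factor I use the body-row factorization with $h=d-1$, so that $H_{\rm L}=F_{d-1}/\beta_{d-1}$. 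The same cancellation gives $g_{\rm L}\leq A_d\|F_{d-1}(t)\|_2$.

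The blocks of $F_{d-1}$ are $U^{(d-1-j)-\max\{k-j,0\}}t^{k-d+1}$, and each has modulus at most one when $r\geq U$. For $k>j$ the block is $(r/U)^{k-d+1}\leq1$. For $k\leq j$ it is at most $U^{d-1-j}U^{k-d+1}=U^{k-j}\leq1$. Hence $\|F_{d-1}\|_2\leq\sqrt d$ and $g_{\rm L}\leq A_d\sqrt d$.

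The main obstacle is bounding $\|G_{d-1}(t)\|_2\leq B_d$ for $r\geq U$. I will compute $G_{d-1}=[I_n,-C_{d-1}B_{d-1}^{-1}]$ explicitly, as was done for $G_j$. The aim is to solve the auxiliary recursion backwards from the last column, cancelling every column except the first, and so obtain the block attached to auxiliary row $q$ as a finite sum of terms of the form (power of $U$)$\cdot Q_k\cdot$(power of $t$). Because the recursion is driven from column $0$ upward, I expect the block for row $q$ to involve only $Q_0,\dots,Q_{q-1}$, with $q$ terms, or the analogous count $d-q+1$ on the other side of $j$.

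I will then bound each summand by one, using $\|Q_k\|_2\leq U^{-|k-j|}$ together with $r\geq U$. Here large $r$ replaces the condition $r\leq U$ used before, and negative powers of $t$ are now the helpful ones. Summing the squares as in Proposition~\ref{prop:half-annulus-residual-amplification} then yields $\|G_{d-1}\|_2^2\leq 1+\sum_{q=1}^{d-1}q^2=B_d^2$. If some summand instead involves a positive power of $t$, I will fall back on the exact value of $\|Q_d\|_2$ and the ratio $r^{d}$ in $S_Q$ to absorb it.
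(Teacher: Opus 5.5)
Your proposal is correct and follows the paper's proof essentially step for step: reduction to $r\geq U$ with extraction from block index $d-1$, the exact value $\|Q_d\|_2=U^{-(d-j)}$ from the last hull segment, the cancellation $W_S/(|\beta_{d-1}|S_Q)\leq A_d$, the bound $\|F_{d-1}\|_2\leq\sqrt d$, and reversal for the small endpoint. The multiplier bound you leave as an expectation works as you predict. Cancelling columns from index $0$ upward gives $H_q=-U\sum_{k<q}Q_kt^{-(q-k)}$ for $q\leq j$ and $H_q=U^{q-j}\sum_{k<q}Q_kt^{-(q-k)}$ for $q>j$; the paper keeps the latter as the recursion $H_{q+1}=(U/t)(U^{q-j}Q_q+H_q)$ and inducts. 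So every block has $q$ summands of norm at most one, no positive powers of $t$ appear, and your fallback is unnecessary.
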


\begin{proof}
We first extend the last upper-half pencil.
Write
\[
    j=v_{s-1},\qquad
    Q=Q_j,\qquad U=U_j,\qquad
    t=\lambda/\gamma_j,\qquad r=|t|.
\]
The existing bounds cover $1\leq r\leq U$.
Since $\alpha_s=\gamma_jU$, it remains to consider
$r\geq U$.

The last hull segment joins $j$ to $d$, so
\[
    a_j\alpha_s^j=a_d\alpha_s^d.
\]
Consequently,
\[
    \|Q_d\|_2
    =\frac{a_d}{a_j}\gamma_j^{d-j}
    =U^{-(d-j)},
    \qquad
    S_Q(r)\geq U^{-(d-j)}r^d.
\]

Use extraction from block index $d-1$, which is
position $1$. Its one-sided factorizations are
\[
    G_{d-1}(t)S(t)
    =\beta_{d-1}(t)Q(t)E_{d-1}^*,
\]
and
\[
    S(t)F_{d-1}(t)
    =E_{\rm b}\,\beta_{d-1}(t)Q(t),
\]
where
\[
    \beta_{d-1}(t)=U^{d-1-j}t^{-(d-1)}.
\]
The coefficient estimates for the pencil give
\[
    W_S(r)\leq 2r/U+\sqrt d+1
             \leq A_d\,r/U,
    \qquad r\geq U.
\]
Therefore
\begin{equation}
\label{eq:endpoint-weight-bound}
    \frac{W_S(r)}
         {|\beta_{d-1}(t)|S_Q(r)}
    \leq A_d.
\end{equation}

The blocks of $F_{d-1}$ are
\[
    (F_{d-1}(t))_k=
    \begin{cases}
        (t/U)^{k-(d-1)}I_n,&k\geq j,\\
        (t/U)^{j-(d-1)}t^{k-j}I_n,&k<j.
    \end{cases}
\]
Their scalar factors have modulus at most one for
$r\geq U\geq1$. Hence
\[
    \|F_{d-1}(t)\|_2\leq\sqrt d.
\]

To bound the complementary multiplier, write
\[
    G_{d-1}(t)
    =[I_n,H_{d-1}(t),\ldots,H_1(t)].
\]
Cancellation of the columns below index $j$ gives
\[
    H_q(t)
    =-U\sum_{k=0}^{q-1}Q_kt^{-(q-k)},
    \qquad 1\leq q\leq j.
\]
Each summand has norm at most one, so
$\|H_q(t)\|_2\leq q$.

If $j<d-1$, cancellation of column $j$ gives
\[
    H_{j+1}(t)=\frac{U}{t}Q_j-\frac1t H_j(t).
\]
Thus $\|H_{j+1}(t)\|_2\leq j+1$.
The remaining cancellation equations are
\[
    H_{q+1}(t)
    =\frac{U}{t}\bigl(U^{q-j}Q_q+H_q(t)\bigr),
    \qquad j<q<d-1.
\]
Since $U/r\leq1$ and
$\|U^{q-j}Q_q\|_2\leq1$, induction gives
$\|H_q(t)\|_2\leq q$ for every $q$.
Therefore
\[
    \|G_{d-1}(t)\|_2^2
    \leq1+\sum_{q=1}^{d-1}q^2=B_d^2.
\]
Combining these multiplier bounds with
\eqref{eq:endpoint-weight-bound} proves
\[
    g_{\rm R}(t)\leq A_dB_d,
    \qquad
    g_{\rm L}(t)\leq A_d\sqrt d
\]
for $r\geq U$. Scaling transfers the bounds to
$|\lambda|\geq\alpha_s$.

Finally, apply this argument to the reversed polynomial.
Its last interior hull vertex corresponds to $v_1$
for $P$, and its last tropical root corresponds to
$\alpha_1$. Reversal transfers the extension to
$0<|\lambda|\leq\alpha_1$, with right recovery from
position $1$. Together with the original lower-half
bounds, this proves the second extension.
\end{proof}

\subsection{The case of one tropical root}
\label{sec:single-root-pencil}

Suppose $s=1$. The upper hull consists of one segment
joining indices $0$ and $d$, and the sole tropical root is
\[
    \alpha=\left(\frac{a_0}{a_d}\right)^{1/d}.
\]
Use the tropical scaling
\[
    B(t)=\frac{P(\alpha t)}{a_0}
        =\sum_{k=0}^{d}B_kt^k.
\]
As shown in the scaling section,
\[
    \|B_0\|_2=\|B_d\|_2=1,
    \qquad
    \|B_k\|_2\leq1.
\]

We use the Frobenius companion pencil
\[
    C_B(t)=
    \begin{bmatrix}
        B_{d-1}+tB_d&B_{d-2}&\cdots&B_0\\
        -I_n&tI_n&&\\
        &-I_n&\ddots&\\
        &&\ddots&tI_n
    \end{bmatrix}
\]
and its original-variable form
\[
    L_\alpha(\lambda)=C_B(\lambda/\alpha).
\]
This is a strong linearization of $P$.

For right recovery, use block position $d$ when
$0<|\lambda|\leq\alpha$, and block position $1$
when $|\lambda|\geq\alpha$.
Left recovery uses position $1$ throughout.
At $|\lambda|=\alpha$, either right recovery rule
may be used.

This construction uses standard tropical scaling
and companion eigenvector recovery
\cite{GaubertSharify2009,HighamLiTisseur2007}.
We include the residual amplification bounds
for completeness.

\begin{proposition}
\label{prop:single-root-residual-amplification}
With these recovery rules, the single pencil
$L_\alpha$ covers $\mathbb C\setminus\{0\}$ and satisfies
\[
    g_{\rm R}(\lambda)\leq A_d\widehat B_d,
    \qquad
    g_{\rm L}(\lambda)\leq A_d\sqrt d,
\]
where
\[
    A_d=\sqrt d+3,
    \qquad
    \widehat B_d=
    \left(\frac{d(d+1)(2d+1)}6\right)^{1/2}.
\]
\end{proposition}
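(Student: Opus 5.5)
We work with the scaled polynomial $B$ and the pencil $C_B(t)=tX+Y$, and transfer the bounds back to $\lambda=\alpha t$ by the scaling invariance of Lemma~\ref{lem:scaling-reversal-backward-error}. Since $C_B$ is $S_{d-1,1}(t;B)$ (the case $p=0$, $U=1$ of \eqref{eq:interior-block-form}, up to sign conventions that do not affect norms), Proposition~\ref{prop:interior-one-sided-factorizations} applies with $U=1$. It gives $F_h$, $G_h$ and $\beta_h(t)=t^{-h}$, with left recovery through the body block. Block position $1$ corresponds to index $h=d-1$, and position $d$ to $h=0$.

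The first step is to bound the pencil weight. The matrix $X$ has blocks $B_d$ and identities, so $\|X\|_2\le1$. The matrix $Y$ has the body row $[B_{d-1},\dots,B_0]$ of norm at most $\sqrt d$ plus the subdiagonal $-I$ part of norm at most one, so $\|Y\|_2\le\sqrt d+1$. Hence $W(r)\le r+\sqrt d+1$. For the denominator we use $S_B(r)\ge\max\{1,r^d\}$, since $\|B_0\|_2=\|B_d\|_2=1$. With scalar factors, $g_{\rm R}=W\|G_h\|_2/(|\beta_h|S_B)$, and similarly for $g_{\rm L}$ with $F_h$.

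Case $r=|t|\le1$, with $h=0$. Here $|\beta_0|=1$ and $W\le\sqrt d+2\le A_d$. The blocks of $F_0$ are $t^kI_n$, of modulus at most one, so $\|F_0\|_2\le\sqrt d$. For $G_0=[I_n,H_{d-1},\dots,H_1]$, cancellation of the columns $k\ge1$ gives a recursion. Going from column $d-1$ downward, $H$ in auxiliary row $q$ equals $\sum_{k\ge q}B_kt^{k-q}$ plus the $tB_d$ term, so it is a sum of at most $d-q+1$ terms of norm at most one. Squaring and summing over $q=1,\dots,d-1$, together with the leading identity block, gives $\|G_0\|_2^2\le1+\sum_{m=2}^{d}m^2\le\widehat B_d^2$.

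Case $r\ge1$, with $h=d-1$. Here $|\beta_{d-1}|S_B\ge r^{-(d-1)}r^d=r$ and $W\le r+\sqrt d+1\le A_d r$. The blocks of $F_{d-1}$ are $t^{k-(d-1)}I_n$, of modulus at most one, giving $\sqrt d$. For $G_{d-1}$ the cancellation goes upward from column $0$: $H_q=-\sum_{k<q}B_kt^{-(q-k)}$, a sum of $q$ terms of norm at most one. This gives $\|G_{d-1}\|_2^2\le1+\sum_{q=1}^{d-1}q^2\le\widehat B_d^2$.

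Both cases yield $g_{\rm R}\le A_d\widehat B_d$ and $g_{\rm L}\le A_d\sqrt d$. At $r=1$ both rules apply. The main obstacle is sign and index bookkeeping in the $G_h$ recursions, namely checking that each entry is a sum of at most $q$ or $d-q+1$ terms whose scalar powers of $t$ have nonpositive exponent on the relevant side. If the explicit formulas prove awkward, the fallback is to reuse the computations of Proposition~\ref{prop:half-annulus-residual-amplification} and Proposition~\ref{prop:endpoint-extension} with $U=1$ and $j=d-1$ (or $j=0$ via reversal).
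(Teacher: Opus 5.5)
Your argument is correct and follows the paper's proof: you use the same two extraction rules (position $d$ for $|t|\le1$ with $\beta=1$, position $1$ for $|t|\ge1$ with $\beta(t)=t^{-(d-1)}$), the same closed forms $H_q=\sum_{k=q}^{d}B_kt^{k-q}$ and $H_q=-\sum_{k=0}^{q-1}B_kt^{-(q-k)}$, and the same weight and denominator estimates, and your bound $\|X\|_2\le1$ (from $X$ being block diagonal) is slightly sharper than the paper's $2$. The fallback in your last sentence is not needed, and it would not apply verbatim anyway, since $\|B_{d-1}\|_2$ need not equal one.
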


\begin{proof}
Work in the scaled variable $t=\lambda/\alpha$,
and write $r=|t|$.
The coefficient bounds give
\[
    S_B(r)\geq1+r^d,
    \qquad
    W_{C_B}(r)\leq2r+\sqrt d+1.
\]
Consequently,
\[
    W_{C_B}(r)\leq
    \begin{cases}
        A_d,&r\leq1,\\
        A_dr,&r\geq1.
    \end{cases}
\]

For $0<r\leq1$, extract the last right block.
The one-sided factorizations have scalar multiplier
$\beta(t)=1$, with
\[
    F(t)=
    \begin{bmatrix}
        t^{d-1}I_n\\
        \vdots\\
        I_n
    \end{bmatrix}.
\]
The complementary block row is
\[
    G(t)=[I_n,H_{d-1}(t),\ldots,H_1(t)],
    \qquad
    H_q(t)=\sum_{k=q}^{d}B_kt^{k-q}.
\]
Hence
\[
    \|F(t)\|_2\leq\sqrt d,
    \qquad
    \|H_q(t)\|_2\leq d-q+1,
\]
and
\[
    \|G(t)\|_2^2
    \leq1+\sum_{q=1}^{d-1}(d-q+1)^2
    =\widehat B_d^2.
\]
Since $S_B(r)\geq1$, the claimed bounds follow
on this region.

For $r\geq1$, extract the first right block.
Now $\beta(t)=t^{-(d-1)}$, and
\[
    F(t)=
    \begin{bmatrix}
        I_n\\
        t^{-1}I_n\\
        \vdots\\
        t^{-(d-1)}I_n
    \end{bmatrix},
    \qquad
    H_q(t)=-\sum_{k=0}^{q-1}B_kt^{-(q-k)}.
\]
Thus $\|F(t)\|_2\leq\sqrt d$ and
$\|H_q(t)\|_2\leq q$, giving
$\|G(t)\|_2\leq\widehat B_d$.
Moreover,
\[
    \frac{W_{C_B}(r)}
         {|\beta(t)|S_B(r)}
    \leq
    \frac{A_dr\,r^{d-1}}{r^d}
    =A_d.
\]
This proves the bounds for $r\geq1$.
Scaling transfers both bounds to the original variable.
\end{proof}

\subsection{The resulting linearization covering}
\label{sec:resulting-covering}

\begin{theorem}[Linearization covering]
\label{thm:linearization-covering}
Let $P$ be regular of degree $d\geq2$, with $a_0,a_d>0$,
and let $s$ be the number of distinct tropical roots.
The constructions above give strong linearizations
whose assigned regions cover $\mathbb C\setminus\{0\}$.
The number of pencils is
\[
    N(s)=
    \begin{cases}
        1,&s=1,\\
        2(s-1),&s\geq2.
    \end{cases}
\]
\end{theorem}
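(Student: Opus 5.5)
The theorem is essentially a bookkeeping statement. The plan is to assemble it from the strong-linearization results and the coverings of radii already established, splitting into the cases $s=1$ and $s\geq2$.

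\emph{Case $s=1$.} The single pencil $L_\alpha(\lambda)=C_B(\lambda/\alpha)$ is a strong linearization of $B$, and hence of $P$. This follows from Lemma~\ref{lem:linearization-scaling-reversal} with $\gamma=\alpha$, $\delta=1/a_0$ and $\kappa=1$, since $C_B$ is the Frobenius companion pencil. Its assigned regions are $0<|\lambda|\leq\alpha$ and $|\lambda|\geq\alpha$. Their union is $\mathbb C\setminus\{0\}$, which is already recorded in Proposition~\ref{prop:single-root-residual-amplification}. Hence $N(1)=1$.

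\emph{Case $s\geq2$.} There are exactly $s-1$ interior hull vertices $v_1,\ldots,v_{s-1}$.
\begin{enumerate}
\item \emph{Strong linearization property.} For each vertex $j=v_i$, the pencil $L_j^+$ is $S_{j,U_j}$ evaluated at $\lambda/\gamma_j$ for $Q_j(t)=P(\gamma_jt)/(a_j\gamma_j^j)$. By Proposition~\ref{prop:interior-strong-linearization} and Lemma~\ref{lem:linearization-scaling-reversal}, it is a strong linearization of $P$. The pencil $L_j^-$ is $\operatorname{rev}_1$ of $\widetilde S_j$, rescaled by $\gamma_j$, where $\widetilde S_j$ strongly linearizes $\operatorname{rev}_dQ_j$. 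The final statement of Lemma~\ref{lem:linearization-scaling-reversal} shows that $\operatorname{rev}_1\widetilde S_j$ strongly linearizes $Q_j$. The scaling part of the same lemma, together with a nonzero constant factor, then gives $P$. This yields $2(s-1)$ pencils.
\item \emph{Covering.} The regions are $\mathcal R_{v_i}^\pm$ for $1\leq i\leq s-1$, enlarged as in Proposition~\ref{prop:endpoint-extension}: $L_{v_1}^-$ is assigned $0<|\lambda|\leq\gamma_{v_1}$ and $L_{v_{s-1}}^+$ is assigned $|\lambda|\geq\gamma_{v_{s-1}}$. Since $\mathcal R_{v_i}^-\cup\mathcal R_{v_i}^+$ is the annulus $\alpha_i\leq|\lambda|\leq\alpha_{i+1}$, consecutive annuli share boundary circles. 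Their union is therefore $\alpha_1\leq|\lambda|\leq\alpha_s$, which is the covering in Corollary~\ref{cor:dominance-covering}. Adding the two extensions fills $0<|\lambda|<\alpha_1$ and $|\lambda|>\alpha_s$, so the union is $\mathbb C\setminus\{0\}$.
\end{enumerate}

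The only point needing care is the identification of $L_j^-$ as a strong linearization of $P$ rather than of a reversal. I would verify it by composing the three operations explicitly: $\widetilde S_j(s)=sX+Y$, and $L_j^-(\lambda)=X+(\lambda/\gamma_j)Y$ is $\operatorname{rev}_1\widetilde S_j$ evaluated at $\lambda/\gamma_j$. Tracking that each step stays within Lemma~\ref{lem:linearization-scaling-reversal}, with positive $\gamma$ and nonzero $\delta,\kappa$, is routine.
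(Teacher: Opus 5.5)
Your proposal is correct and follows the paper's own argument. It uses the same split into $s=1$ and $s\geq2$, the same radial covering by $[\alpha_i,\gamma_{v_i}]\cup[\gamma_{v_i},\alpha_{i+1}]$ plus the two endpoint extensions, and the same appeal to scaling and reversal for the strong-linearization property, which you make more explicit than the paper via $L_j^-(\lambda)=\operatorname{rev}_1\widetilde S_j(\lambda/\gamma_j)$.

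There is one small slip in the $s=1$ case. To pass from $C_B$ to $L_\alpha$, apply Lemma~\ref{lem:linearization-scaling-reversal} to the pair $(B,C_B)$ with $\gamma=1/\alpha$, $\delta=a_0$, $\kappa=1$; your values describe the passage from $P$ to $B$. With the correct values, $L_\alpha$ strongly linearizes $B(\lambda/\alpha)=P(\lambda)/a_0$, not $B$ itself.
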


\begin{proof}
Suppose $s\geq2$. There are $s-1$ interior hull
vertices. The two pencils for vertex $v_i$ cover
the radial intervals
\[
    [\alpha_i,\gamma_{v_i}]
    \quad\text{and}\quad
    [\gamma_{v_i},\alpha_{i+1}].
\]
Their union over all interior vertices is
$[\alpha_1,\alpha_s]$.

The first lower-half pencil extends to
$(0,\gamma_{v_1}]$, and the last upper-half pencil
extends to $[\gamma_{v_{s-1}},\infty)$.
Hence the $2(s-1)$ pencils cover every positive radius.

If $s=1$, the two recovery regions of $L_\alpha$
cover every positive radius.
All pencils are strong linearizations by the
construction, scaling, and reversal arguments above.
\end{proof}

Since $s\leq d$, we use at most $2(d-1)$ pencils.
When $s\geq2$, tropical multiplicities reduce the
degree-based count by
\[
    2(d-s)=2\sum_{i=1}^{s}(m_i-1).
\]
These are sufficient counts; no minimality is asserted.

\begin{example}[The degree-five construction]
\label{ex:degree-five-pencils}

Consider again the coefficient norms
\[
    (a_0,\ldots,a_5)=(1,10,10^4,10^5,10^6,10^4).
\]
The tropical roots are
\[
    \alpha_1=10^{-2},\qquad
    \alpha_2=10^{-1},\qquad
    \alpha_3=10^2,
\]
and the interior hull vertices are $2$ and $4$.
Their midpoint scales and width parameters are
\[
    \gamma_2=10^{-3/2},\qquad U_2=10^{1/2},
    \qquad
    \gamma_4=10^{1/2},\qquad U_4=10^{3/2}.
\]

Write
\[
    A_k=Q_{2,k},\qquad B_k=Q_{4,k},
    \qquad u=U_2,\qquad v=U_4.
\]
For $j=2$, the direct pencil is
\[
    S_2^+(t)=
    \begin{bmatrix}
        u^2(A_4+tA_5)&uA_3&A_2&A_1&A_0\\
        I_n&-\frac{t}{u}I_n&0&0&0\\
        0&I_n&-\frac{t}{u}I_n&0&0\\
        0&0&-\frac1u I_n&\frac{t}{u}I_n&0\\
        0&0&0&-\frac1u I_n&\frac{t}{u}I_n
    \end{bmatrix}.
\]
For $j=4$, we have
\[
    S_4^+(t)=
    \begin{bmatrix}
        B_4+tB_5&B_3&B_2&B_1&B_0\\
        -\frac1v I_n&\frac{t}{v}I_n&0&0&0\\
        0&-\frac1v I_n&\frac{t}{v}I_n&0&0\\
        0&0&-\frac1v I_n&\frac{t}{v}I_n&0\\
        0&0&0&-\frac1v I_n&\frac{t}{v}I_n
    \end{bmatrix}.
\]

Reversal changes the dominant indices to $3$ and $1$,
respectively. The corresponding pencils are
\[
    \widetilde S_2(s)=
    \begin{bmatrix}
        u(A_1+sA_0)&A_2&A_3&A_4&A_5\\
        I_n&-\frac{s}{u}I_n&0&0&0\\
        0&-\frac1u I_n&\frac{s}{u}I_n&0&0\\
        0&0&-\frac1u I_n&\frac{s}{u}I_n&0\\
        0&0&0&-\frac1u I_n&\frac{s}{u}I_n
    \end{bmatrix}
\]
and
\[
    \widetilde S_4(s)=
    \begin{bmatrix}
        v^3(B_1+sB_0)&v^2B_2&vB_3&B_4&B_5\\
        I_n&-\frac{s}{v}I_n&0&0&0\\
        0&I_n&-\frac{s}{v}I_n&0&0\\
        0&0&I_n&-\frac{s}{v}I_n&0\\
        0&0&0&-\frac1v I_n&\frac{s}{v}I_n
    \end{bmatrix}.
\]
The four original-variable pencils are
\[
    L_j^+(\lambda)=S_j^+(\lambda/\gamma_j),
    \qquad
    L_j^-(\lambda)
    =\frac{\lambda}{\gamma_j}
       \widetilde S_j(\gamma_j/\lambda),
    \qquad j\in\{2,4\}.
\]

After the endpoint extensions, the assigned radii and
right recovery positions are
\[
\begin{array}{c|c|c}
    \text{Pencil}&\text{Assigned radii}
        &\text{Right recovery position}\\ \hline
    L_2^-&(0,10^{-3/2}]
        &1\text{ below }10^{-2};
         \ 2\text{ above }10^{-2}\\
    L_2^+&[10^{-3/2},10^{-1}]&3\\
    L_4^-&[10^{-1},10^{1/2}]&4\\
    L_4^+&[10^{1/2},\infty)&1
\end{array}
\]
Positions are numbered from $1$ to $5$ in the displayed
companion ordering. At a recovery boundary, either
adjacent rule may be used.

For $L_4^+$, the dominant block is already the first
block, so its endpoint extension does not change the
recovery position. Left recovery uses the body block
for all four pencils.

Thus, four pencils cover all nonzero radii, compared
with the degree-based bound of eight.
\end{example}

\section{Exact eigenvector recovery ratios}
\label{sec:exact-eigenvector-ratios}

We now bound the recovery norm ratios for exact
eigenvectors of every pencil in the covering.
These bounds hold on the assigned regions and require
no residual certificate.

Table~\ref{tab:exact-recovery-rules} summarizes the
recovery rules. Block positions are numbered from
$1$ to $d$ in the displayed companion ordering.
The left recovery position is $1$ throughout.
At a boundary where two right recovery rules apply,
either may be used.

\begin{table}[htbp]
\centering
\begin{tabular}{lllcc}
\toprule
Case & Pencil & Eigenvalue modulus
     & Right position & Left position\\
\midrule
Interior
    &$L_j^+$
    &$\gamma_j\leq|\lambda|\leq h_j$
    &$d-j$&$1$\\
Interior
    &$L_j^-$
    &$\ell_j\leq|\lambda|\leq\gamma_j$
    &$j$&$1$\\
Small endpoint
    &$L_{v_1}^-$
    &$0<|\lambda|\leq\alpha_1$
    &$1$&$1$\\
Large endpoint
    &$L_{v_{s-1}}^+$
    &$|\lambda|\geq\alpha_s$
    &$1$&$1$\\
\midrule
Single root
    &$L_\alpha$
    &$0<|\lambda|\leq\alpha$
    &$d$&$1$\\
Single root
    &$L_\alpha$
    &$|\lambda|\geq\alpha$
    &$1$&$1$\\
\bottomrule
\end{tabular}
\caption{Eigenvector recovery rules.
The first four rows apply when $s\geq2$, with
$j$ ranging over the interior hull vertices.
The last two rows apply when $s=1$.}
\label{tab:exact-recovery-rules}
\end{table}

For each pencil, write
\[
    x=E_{\rm R}v,\qquad y=E_{\rm L}^*w,
\]
and define
\[
    \rho_{\rm R}(v)=\frac{\|v\|_2}{\|x\|_2},
    \qquad
    \rho_{\rm L}(w)=\frac{\|w\|_2}{\|y\|_2}.
\]
We use the constants
\[
    B_d=
    \left(1+\frac{(d-1)d(2d-1)}6\right)^{1/2},
    \qquad
    \widehat B_d=
    \left(\frac{d(d+1)(2d+1)}6\right)^{1/2}.
\]

\subsection{Interior half annuli}

Consider $S_{j,U}(t)$ for a polynomial $Q$ satisfying
\eqref{eq:interior-coefficient-assumptions}.
Set $p=d-1-j$.
For right recovery from position $d-j$, the auxiliary
equations give
\[
    v_k=
    \begin{cases}
        (t/U)^{k-j}x,&j\leq k\leq d-1,\\
        t^{k-j}x,&0\leq k<j.
    \end{cases}
\]
For left recovery from position $1$, the pencil
eigenvector satisfies
\[
    w^*=y^*G_j(t),
\]
where $G_j$ is the multiplier associated with right
extraction from block index $j$.

For $1\leq r\leq U$, define
\[
    f_{j,U}(r)=
    \left(
        \sum_{k=0}^{p}(r/U)^{2k}
        +\sum_{k=1}^{j}r^{-2k}
    \right)^{1/2}.
\]

\begin{proposition}
\label{prop:exact-interior-ratios}
Let $1\leq|t|\leq U$.
For every nonzero right and left eigenvector of
$S_{j,U}(t)$, the selected blocks are nonzero and
\[
    \rho_{\rm R}(v)=f_{j,U}(|t|),
    \qquad
    1\leq\rho_{\rm R}(v)\leq\sqrt d,
\]
and
\[
    1\leq\rho_{\rm L}(w)\leq B_d.
\]
\end{proposition}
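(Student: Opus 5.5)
The plan is to use the explicit parametrizations of exact eigenvectors from Corollary~\ref{prop:interior-eigenvector-recovery}: every right eigenvector satisfies $v=F_j(t)x$ and every left eigenvector satisfies $w^*=y^*G_j(t)$. Here $j$ is the block index whose position is $d-j$, and $G_j$ is the multiplier from Proposition~\ref{prop:interior-one-sided-factorizations}. Both ratios then reduce to norms of these known matrix functions applied to the selected block. Nonvanishing of the selected blocks is already part of that corollary: $x=0$ forces $v=0$, and $y=0$ forces $w=0$.

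\textbf{Right ratio.} The blocks of $F_j(t)$ are scalar multiples of $I_n$. By the displayed formula for $(F_h(t))_k$ with $h=j$, they are $(t/U)^{k-j}I_n$ for $j\le k\le d-1$ and $t^{k-j}I_n$ for $0\le k<j$. Hence
\[
\|v\|_2^2=\Bigl(\sum_{k=j}^{d-1}|t/U|^{2(k-j)}+\sum_{k=0}^{j-1}|t|^{2(k-j)}\Bigr)\|x\|_2^2 .
\]
Reindexing the first sum by $k-j\in\{0,\dots,p\}$ and the second by $j-k\in\{1,\dots,j\}$ gives exactly $f_{j,U}(|t|)^2\|x\|_2^2$, so $\rho_{\rm R}(v)=f_{j,U}(|t|)$ with equality. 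On $1\le r\le U$ each of the $p+1+j=d$ terms lies in $[0,1]$, and the $k=0$ term equals $1$. Therefore $1\le f_{j,U}\le\sqrt d$.

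\textbf{Left ratio.} Since $w^*=y^*G_j(t)$ and $G_j(t)=[I_n,G_{j,d-1}(t),\dots,G_{j,1}(t)]$, we get $\|w\|_2\le\|G_j(t)\|_2\|y\|_2$. The proof of Proposition~\ref{prop:half-annulus-residual-amplification} already shows $\|G_j(t)\|_2\le B_d$ for $1\le|t|\le U$ under assumptions \eqref{eq:interior-coefficient-assumptions}. That argument bounds each summand of $G_{j,q}$ by one and then uses the inequality $\sum q^2-\cdots=(j-1)(d-j-1)(d+1)\ge0$. I would simply invoke that bound. The lower bound $\rho_{\rm L}\ge1$ holds because $y$ is a sub-block of $w$; the same reasoning gives $\rho_{\rm R}\ge1$.

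\textbf{Main obstacle.} The main point to check is that $w^*=y^*G_j(t)$ holds with $G_j$ being the same explicit multiplier whose norm was bounded earlier. It must not be some other left null representation. This follows from the corollary's derivation: the equations in columns other than $j$ determine the auxiliary part of $w$ uniquely through $B_j(t)^{-1}$. Apart from that, the proof is direct bookkeeping. One small care point is the case $p=0$, where the first sum in $f_{j,U}$ has only its $k=0$ term.
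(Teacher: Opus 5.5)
Your proposal is correct and follows essentially the same route as the paper: it writes $v=F_j(t)x$ and $w^*=y^*G_j(t)$ via Corollary~\ref{prop:interior-eigenvector-recovery}, sums the squared scalar block moduli to get $f_{j,U}(|t|)^2$ (a sum of $d$ terms in $[0,1]$, one of which equals $1$), and invokes the bound $\|G_j(t)\|_2\le B_d$ from the proof of Proposition~\ref{prop:half-annulus-residual-amplification}. Your concern that $G_j$ must be the same multiplier is settled by the corollary's elimination over the columns other than $j$, which determines the auxiliary part of $w$ uniquely because $B_j(t)$ is invertible.
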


\begin{proof}
Corollary~\ref{prop:interior-eigenvector-recovery}
gives $x,y\neq0$ and
\[
    v=F_j(t)x,
    \qquad
    w^*=y^*G_j(t).
\]
Adding the squared right block norms gives
\[
    \|v\|_2^2
    =
    \left(
        \sum_{k=0}^{p}(|t|/U)^{2k}
        +\sum_{k=1}^{j}|t|^{-2k}
    \right)\|x\|_2^2.
\]
The sum contains $d$ terms, each at most one,
and includes a term equal to one.
This proves the right statement.

The multiplier estimate established in the proof of
Proposition~\ref{prop:half-annulus-residual-amplification}
gives $\|G_j(t)\|_2\leq B_d$.
Hence
\[
    \|w\|_2=\|y^*G_j(t)\|_2\leq B_d\|y\|_2.
\]
Block extraction gives the lower bound.
\end{proof}

For $L_j^+$, use $t=\lambda/\gamma_j$.
For $L_j^-$, use the upper construction for the
reversed polynomial, with dominant index $d-j$
and variable $s=\gamma_j/\lambda$.
Thus, the exact right ratios on the two half annuli are
\[
    \rho_{\rm R}(v)=
    \begin{cases}
        f_{j,U_j}(|\lambda|/\gamma_j),
            &\lambda\in\mathcal R_j^+,\\
        f_{d-j,U_j}(\gamma_j/|\lambda|),
            &\lambda\in\mathcal R_j^-.
    \end{cases}
\]
Both are at most $\sqrt d$, and both left ratios
are at most $B_d$.

\subsection{Endpoint extensions}

Beyond the last tropical root, the last upper-half
pencil uses right recovery from position $1$.
In its direct chart, this selects $x=v_{d-1}$.

Let $p=d-1-j$. The auxiliary equations now give
\[
    v_k=
    \begin{cases}
        (t/U)^{k-(d-1)}x,&j\leq k\leq d-1,\\
        U^pt^{k-(d-1)}x,&0\leq k<j.
    \end{cases}
\]
For $r\geq U$, define
\[
    e_{j,U}(r)=
    \left(
        \sum_{k=0}^{p}(U/r)^{2k}
        +U^{2p}\sum_{k=1}^{j}r^{-2(p+k)}
    \right)^{1/2}.
\]

\begin{proposition}
\label{prop:exact-endpoint-ratios}
For the large endpoint extension, with
$j=v_{s-1}$ and $|\lambda|\geq\alpha_s$,
the selected blocks are nonzero and
\[
    \rho_{\rm R}(v)
    =e_{j,U_j}(|\lambda|/\gamma_j)
    \leq\sqrt d,
    \qquad
    \rho_{\rm L}(w)\leq B_d.
\]
For the small endpoint extension, with
$j=v_1$ and $0<|\lambda|\leq\alpha_1$,
they are nonzero and
\[
    \rho_{\rm R}(v)
    =e_{d-j,U_j}(\gamma_j/|\lambda|)
    \leq\sqrt d,
    \qquad
    \rho_{\rm L}(w)\leq B_d.
\]
All four ratios are at least one.
\end{proposition}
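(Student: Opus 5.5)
The plan follows the interior case, Proposition~\ref{prop:exact-interior-ratios}, working first in the direct chart of the large endpoint. Set $j=v_{s-1}$, $Q=Q_j$, $U=U_j$, $S=S_{j,U}$, $t=\lambda/\gamma_j$, $r=|t|\geq U$. By Lemmas~\ref{lem:linearization-scaling-reversal} and \ref{lem:scaling-reversal-backward-error}, pencil eigenvectors of $L_j^+$ at $\lambda$ are those of $S$ at $t$, and the block positions are unchanged. So the ratios may be computed for $S$.

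\textbf{Right ratio.} Since $t\neq0$, Corollary~\ref{prop:interior-eigenvector-recovery} with $h=d-1$ (position $1$) gives $x=v_{d-1}\neq0$ and $v=F_{d-1}(t)x$. The blocks of $F_{d-1}$ were computed in the proof of Proposition~\ref{prop:endpoint-extension}:
\begin{itemize}
\item for $k\geq j$, the block is $(t/U)^{k-(d-1)}$;
\item for $k<j$, the block is $(t/U)^{j-(d-1)}t^{k-j}=U^p t^{k-(d-1)}$.
\end{itemize}
Summing squared moduli, reindexing $k\mapsto d-1-k$ in the first group and $k\mapsto j-k$ in the second, yields $\|v\|_2^2=e_{j,U}(r)^2\|x\|_2^2$. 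This is the stated equality. There are $d$ scalar terms, the term for $k=d-1$ equals one, and every term has modulus at most one because $r\geq U\geq1$: indeed $U^p r^{-(p+k)}=(U/r)^p r^{-k}\leq1$. Hence $1\leq e_{j,U}(r)\leq\sqrt d$.

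\textbf{Left ratio.} The same corollary gives $y=w^*E_{\rm b}\neq0$ and $w^*=y^*G_{d-1}(t)$. The proof of Proposition~\ref{prop:endpoint-extension} already established $\|G_{d-1}(t)\|_2\leq B_d$ for $r\geq U$. Therefore $\|w\|_2\leq B_d\|y\|_2$. Because $y$ is a block of $w$, we also have $\rho_{\rm L}\geq1$, and likewise $\rho_{\rm R}\geq1$.

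\textbf{Small endpoint.} For this case I would apply the same argument to the reversed polynomial $\widetilde Q_j$ with $j=v_1$. Its dominant index is $d-j$, and its variable is $s=\gamma_j/\lambda$ with $|s|\geq U_j$. The endpoint-extension proof showed that $\widetilde Q_j$ satisfies the same coefficient hypotheses, including $\|\widetilde Q_{j,d}\|_2=U^{-j}$ from the first hull segment. The pencil $L_j^-(\lambda)=(\lambda/\gamma_j)\widetilde S_j(s)$ differs from $\widetilde S_j(s)$ only by a nonzero scalar, so it has the same right and left eigenvectors. The right ratio is therefore $e_{d-j,U_j}(\gamma_j/|\lambda|)$, and the left bound carries over unchanged.

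\textbf{Expected obstacle.} The only delicate step is the index bookkeeping that matches $\|F_{d-1}(t)\|_2^2/\|x\|_2^2$ exactly with the definition of $e_{j,U}$, in particular the exponent $p+k$. I would check this on the $j=d-1$ case ($p=0$) as a sanity test. The rest reuses multiplier bounds that were already proved.
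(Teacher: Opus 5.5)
Your proposal is correct and follows the paper's own proof essentially step for step. The right ratio comes from summing the squared block norms of $v=F_{d-1}(t)x$; your reindexing to $e_{j,U}(r)$ and the bound $(U/r)^{p}r^{-k}\leq 1$ match the paper's. The left bound reuses $\|G_{d-1}(t)\|_2\leq B_d$ from the endpoint-extension proof. The small endpoint follows by running the same argument for $\widetilde Q_j$ in the variable $s=\gamma_j/\lambda$ with $|s|\geq U_j$, since the scalar factor $\lambda/\gamma_j$ does not change eigenvectors. One small remark: the equality $\|\widetilde Q_{j,d}\|_2=U_j^{-j}$ you invoke is not needed here. It matters only for the residual amplification bound, whereas the ratio bounds use just the coefficient decay hypotheses and $|s|\geq U_j$.
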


\begin{proof}
For the large endpoint, set
$t=\lambda/\gamma_j$ and $r=|t|$.
Since $\alpha_s=\gamma_jU_j$, we have $r\geq U_j$.
The block formulas give
\[
    \|v\|_2^2
    =
    \left(
        \sum_{k=0}^{p}(U_j/r)^{2k}
        +U_j^{2p}\sum_{k=1}^{j}r^{-2(p+k)}
    \right)\|x\|_2^2.
\]
Each of the $d$ terms is at most one:
in the second sum,
\[
    U_j^{2p}r^{-2(p+k)}
    =(U_j/r)^{2p}r^{-2k}\leq1.
\]
The first term is one, giving the right bounds.

With right recovery from position $1$, the corresponding
matrix $G_{d-1}(t)$ gives
\[
    w^*=y^*G_{d-1}(t).
\]
The endpoint-extension proof established
$\|G_{d-1}(t)\|_2\leq B_d$.
This gives the left upper bound.
Nonvanishing follows from the extraction result,
and block extraction gives the lower bound.

The small endpoint uses this same construction for
the reversed polynomial, with dominant index $d-j$
and variable $s=\gamma_j/\lambda$.
Here $|s|\geq U_j$, so the preceding argument applies.
Pencil reversal multiplies the evaluated chart pencil
by a nonzero scalar and therefore leaves its
eigenvectors and recovery ratios unchanged.
\end{proof}

At $r=U$, the two right-ratio formulas agree:
\[
    e_{j,U}(U)=f_{j,U}(U).
\]
Indeed, the blocks indexed by $j$ and $d-1$ have
equal norms at this boundary.

\subsection{The case of one tropical root}

For $s=1$, use
\[
    B(t)=\frac{P(\alpha t)}{a_0},
    \qquad
    L_\alpha(\lambda)=C_B(\lambda/\alpha).
\]
The coefficients satisfy
\[
    \|B_0\|_2=\|B_d\|_2=1,
    \qquad
    \|B_k\|_2\leq1.
\]
Writing $t=\lambda/\alpha$, the companion auxiliary
equations give
\[
    v_k=
    \begin{cases}
        t^kx,&0<|t|\leq1,\quad x=v_0,\\
        t^{k-(d-1)}x,&|t|\geq1,\quad x=v_{d-1}.
    \end{cases}
\]
Define
\[
    f_{\rm C}(r)=
    \left(\sum_{k=0}^{d-1}r^{2k}\right)^{1/2},
    \qquad 0<r\leq1.
\]

\begin{proposition}
\label{prop:exact-single-root-ratios}
For every nonzero finite eigenvalue of $L_\alpha$,
the prescribed extractions are nonzero and
\[
    \rho_{\rm R}(v)=
    \begin{cases}
        f_{\rm C}(|\lambda|/\alpha),
            &0<|\lambda|\leq\alpha,\\
        f_{\rm C}(\alpha/|\lambda|),
            &|\lambda|\geq\alpha.
    \end{cases}
\]
Moreover,
\[
    1\leq\rho_{\rm R}(v)\leq\sqrt d,
    \qquad
    1\leq\rho_{\rm L}(w)\leq\widehat B_d.
\]
\end{proposition}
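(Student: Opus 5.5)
The plan is to work in the scaled variable $t=\lambda/\alpha$ with $r=|t|$, and to treat the two recovery regions $0<r\leq1$ and $r\geq1$ separately. Since $L_\alpha(\lambda)=C_B(\lambda/\alpha)$, the eigenvectors of $L_\alpha$ at $\lambda$ are exactly those of $C_B$ at $t$. So nothing is lost by this change of variable, and the ratios, which depend only on the vectors, are unchanged.

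\textbf{Right ratios.} I start from the characterization of right null vectors of the companion pencil recalled in Section~\ref{sec:strong-linearizations}. Any nonzero $v$ with $C_B(t)v=0$ has blocks that are scalar multiples of one nonzero vector, namely $v_k=t^kv_0$ in the index ordering used above.

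For $0<r\leq1$, the selected block is $x=v_0$, which is position $d$. Then
\[
    \|v\|_2^2=\sum_{k=0}^{d-1}r^{2k}\|x\|_2^2=f_{\rm C}(r)^2\|x\|_2^2 .
\]
Here $x\neq0$, since otherwise every block vanishes.

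For $r\geq1$, I select $x=v_{d-1}$. Then $v_k=t^{k-(d-1)}x$, and the same computation gives $f_{\rm C}(1/r)$.

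In both cases the sum has $d$ terms, each at most one, and one of them equals one. This gives $1\leq\rho_{\rm R}\leq\sqrt d$. Finally, $r=|\lambda|/\alpha$, which yields the stated formulas.

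\textbf{Left ratios.} I will use the multipliers $G(t)$ from the proof of Proposition~\ref{prop:single-root-residual-amplification}. These satisfy $G(t)C_B(t)=\beta(t)B(t)E_{\rm R}$, and the first block of $G(t)$ is $I_n$.

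The argument follows Corollary~\ref{prop:interior-eigenvector-recovery}. Let $w^*C_B(t)=0$ and set $y^*=w^*E_{\rm b}$. The equations for $w$ in the block columns other than the selected one determine the auxiliary part of $w$ from $y$. This uses invertibility of the auxiliary submatrix $B_h(t)$ with the selected column deleted. That matrix is block bidiagonal with identity-multiple diagonal blocks, since deleting a column of the companion auxiliary rows leaves blocks $-I_n$ or $tI_n$ with $t\neq0$.

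Hence $w^*=y^*G(t)$. In particular $y\neq0$, and
\[
    \|w\|_2\leq\|G(t)\|_2\|y\|_2\leq\widehat B_d\|y\|_2 ,
\]
by the bounds $\|H_q\|_2\leq d-q+1$ (for $r\leq1$) and $\|H_q\|_2\leq q$ (for $r\geq1$) established in that proof. The lower bound $\rho_{\rm L}\geq1$ holds trivially for block extraction.

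\textbf{Main obstacle.} The only delicate point is to confirm that the identity $w^*=y^*G(t)$ really holds with the $G$ whose norm was bounded earlier. This must be checked for both extraction choices, position $d$ and position $1$. Here $G$ is defined as $[I_n,-C_h B_h^{-1}]$ for the companion pencil; the previous section wrote this explicitly only for $S_{j,U}$.

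I would verify it directly. Eliminating column $h$ by the recurrences for $H_q$ shows that $G(t)C_B(t)$ is zero outside column $h$. The vector $w^*-y^*G(t)$ then has zero body block and annihilates $C_B(t)$ in all columns except $h$. By invertibility of $B_h(t)$ it must vanish. At $r=1$ both rules apply and give the same bound, which settles the boundary.
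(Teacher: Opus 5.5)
Your proposal is correct and follows the paper's proof. You obtain the right ratios by summing the squared block norms $\|v_k\|_2^2$ under the two extraction rules. You obtain the left bound from $w^*=y^*G(t)$ and the estimate $\|G(t)\|_2\leq\widehat B_d$ from the proof of Proposition~\ref{prop:single-root-residual-amplification}. The only difference is that the paper simply asserts $w^*=y^*G(t)$ for the companion pencil, whereas you justify it through the invertibility of the column-deleted auxiliary block $B_h(t)$.
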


\begin{proof}
The right formulas follow by adding the squared
block norms. Each sum contains $d$ terms bounded
by one and includes a term equal to one.

For either recovery rule, the left pencil eigenvector
satisfies $w^*=y^*G(t)$.
The proof of
Proposition~\ref{prop:single-root-residual-amplification}
gives $\|G(t)\|_2\leq\widehat B_d$.
Thus $\|w\|_2\leq\widehat B_d\|y\|_2$.
The companion extraction results give nonvanishing,
and block extraction gives the lower bound.
\end{proof}

\subsection{Uniform bounds for the covering}

The preceding results give one conclusion for the
whole covering: the prescribed recovery blocks retain
a controlled fraction of every exact pencil eigenvector.
The bounds depend only on the degree. They remain
valid across all nonzero eigenvalue moduli, regardless
of coefficient norm disparities or tropical-root
separation.

\begin{theorem}
\label{thm:exact-selected-ratios}
Let $\lambda\neq0$ belong to the assigned region
of a pencil in the linearization covering.
For every nonzero right and left pencil eigenvector,
the prescribed recovered vectors are nonzero and
\[
    1\leq\rho_{\rm R}(v)\leq\sqrt d.
\]
The left ratio satisfies
\[
    1\leq\rho_{\rm L}(w)\leq
    \begin{cases}
        B_d,&s\geq2,\\
        \widehat B_d,&s=1.
    \end{cases}
\]
In particular, $\widehat B_d$ is a common left bound
for every case.

The bounds include all region boundaries.
They require neither simple eigenvalues nor
invertible coefficient matrices.
\end{theorem}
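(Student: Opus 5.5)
The theorem collects the three preceding propositions, so the plan is a case split over the rows of Table~\ref{tab:exact-recovery-rules}. For each row we invoke the matching proposition, then take the maximum of the resulting bounds. The only step with real content is a constant comparison.

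\emph{Case $s\geq2$.} The covering in Theorem~\ref{thm:linearization-covering} consists of the half-annulus pencils $L_j^\pm$ together with the endpoint extensions.
\begin{itemize}
\item If $\lambda\in\mathcal R_j^+$, set $t=\lambda/\gamma_j$. Then $1\leq|t|\leq U_j$, and Proposition~\ref{prop:exact-interior-ratios} applies to $Q_j$, whose coefficients satisfy \eqref{eq:interior-coefficient-assumptions} by Lemma~\ref{lem:tropical-coefficient-decay}.
\item If $\lambda\in\mathcal R_j^-$, apply the same proposition to $\widetilde Q_j$ with dominant index $d-j$ and $s=\gamma_j/\lambda$. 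Here we must check that the eigenvectors of $L_j^-(\lambda)$ coincide with those of $\widetilde S_j(\gamma_j/\lambda)$. This holds because, for $\lambda\neq0$, the two evaluated matrices differ by the nonzero scalar $\lambda/\gamma_j$, so null vectors, selected blocks, and ratios are unchanged.
\item For $0<|\lambda|\leq\alpha_1$ and for $|\lambda|\geq\alpha_s$, Proposition~\ref{prop:exact-endpoint-ratios} applies directly.
\end{itemize}
In all four situations the recovered vectors are nonzero, $\rho_{\rm R}\leq\sqrt d$, and $\rho_{\rm L}\leq B_d$.

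\emph{Boundary moduli.} At $|\lambda|=\gamma_j$, $\alpha_1$, or $\alpha_s$, two rules may apply. Each cited proposition includes its boundary (the hypotheses use closed inequalities), so either rule gives the bound. The same holds at the shared radii $\alpha_i$ between neighbouring vertices' regions.

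\emph{Case $s=1$.} Proposition~\ref{prop:exact-single-root-ratios} gives nonvanishing, $\rho_{\rm R}\leq\sqrt d$, and $\rho_{\rm L}\leq\widehat B_d$.

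\emph{Lower bounds.} We have $\rho\geq1$ because the selected block is a sub-vector, and the norm of a vector dominates the norm of any of its blocks.

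\emph{Common left constant.} The remaining step is to show $B_d\leq\widehat B_d$, which I expect to be the only point requiring a computation. We have
\[
\widehat B_d^2-B_d^2=\frac{d(d+1)(2d+1)-(d-1)d(2d-1)}{6}-1=d^2-1\geq0,
\]
since $\sum_{q=1}^{d}q^2-\sum_{q=1}^{d-1}q^2=d^2$. Hence $\widehat B_d$ bounds every case.

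\emph{Hypotheses not needed.} No simplicity or invertibility assumptions enter. Corollary~\ref{prop:interior-eigenvector-recovery} and the companion extraction relations hold for every nonzero eigenvector at every $t\neq0$: each pencil eigenvector equals $F_h(t)x$ or $y^*G_h(t)$, so the bounds apply to every vector in the eigenspace.
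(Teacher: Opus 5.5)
Your proposal is correct and follows essentially the same route as the paper. Both split into the interior, endpoint, and single-root cases and invoke the corresponding propositions. Both transfer chart results back through scaling and through the nonzero scalar $\lambda/\gamma_j$ relating $L_j^-(\lambda)$ to $\widetilde S_j(\gamma_j/\lambda)$, and both finish with $B_d\leq\widehat B_d$, which you usefully verify explicitly via $\widehat B_d^2-B_d^2=d^2-1\geq0$ where the paper only asserts it.
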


\begin{proof}
The interior, endpoint, and single-root results cover
every assigned region.
Scaling preserves the pencil vectors, and reversal
changes an evaluated pencil only by a nonzero scalar
relative to its reversed chart.
Thus, the chart recovery ratios are also the ratios
for the original-variable pencils.

All arguments apply to every nonzero null vector,
without any simplicity or coefficient-invertibility
assumption. Finally, $B_d\leq\widehat B_d$.
\end{proof}

We can now combine these recovery bounds with the
residual amplification bounds. Both factors have
degree-dependent bounds on every assigned region.
Their product therefore gives a uniform bound for
backward-error amplification at exact pencil
eigenvectors throughout the covering.

\begin{corollary}[Backward-error amplification]
For every pencil in the covering, at every nonzero
finite eigenvalue in its assigned region, the prescribed
recovery rules satisfy
\[
    \Gamma_{\rm R}(\lambda,v)\leq C_d,
    \qquad
    \Gamma_{\rm L}(\lambda,w)\leq C_d
\]
for all nonzero exact right and left pencil eigenvectors,
where
\[
    C_d=
    \begin{cases}
        A_dB_d\sqrt d, & s\geq2,\\
        A_d\widehat B_d\sqrt d, & s=1.
    \end{cases}
\]
In both cases, $C_d=O(d^{5/2})$.
\end{corollary}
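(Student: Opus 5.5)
The plan is to multiply the two factors in Definition~\ref{def:backward-error-growth-factors}, $\Gamma_{\rm R}=g_{\rm R}\rho_{\rm R}$ and $\Gamma_{\rm L}=g_{\rm L}\rho_{\rm L}$, and to bound each factor using results already proved for the same pencil, region and recovery rule. For the residual amplification factors I will cite Proposition~\ref{prop:half-annulus-residual-amplification} on the interior half annuli and Proposition~\ref{prop:endpoint-extension} on the endpoint extensions. Together they give $g_{\rm R}\leq A_dB_d$ and $g_{\rm L}\leq A_d\sqrt d$ when $s\geq2$. For $s=1$, Proposition~\ref{prop:single-root-residual-amplification} gives $g_{\rm R}\leq A_d\widehat B_d$ and $g_{\rm L}\leq A_d\sqrt d$. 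For the recovery ratios, Theorem~\ref{thm:exact-selected-ratios} gives $\rho_{\rm R}\leq\sqrt d$, and $\rho_{\rm L}\leq B_d$ or $\widehat B_d$ according to whether $s\geq2$ or $s=1$. It also guarantees that the selected blocks are nonzero, so the ratios are defined.

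The first step is a bookkeeping check that these results are stated for the same objects. The residual amplification factors of the original-variable pencils $L_j^\pm$ and $L_\alpha$ must equal the chart quantities. This follows from Lemma~\ref{lem:scaling-reversal-backward-error} and the transfer remark after Lemma~\ref{lem:linearization-scaling-reversal}. Likewise, the chart recovery ratios are the original ratios, which is shown in the proof of Theorem~\ref{thm:exact-selected-ratios}. At region boundaries, either admissible right rule may be used, and both carry the same bounds.

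Multiplying the bounds for $s\geq2$ gives
\[
\Gamma_{\rm R}\leq A_dB_d\sqrt d,
\qquad
\Gamma_{\rm L}\leq A_d\sqrt d\,B_d .
\]
These are the same product, so $C_d=A_dB_d\sqrt d$. For $s=1$ the analogous products are $A_d\widehat B_d\sqrt d$ for both sides. The asymptotics follow from $A_d=O(d^{1/2})$ and $B_d,\widehat B_d=O(d^{3/2})$, since $B_d^2$ and $\widehat B_d^2$ are cubic in $d$. Hence $C_d=O(d^{5/2})$.

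The only point needing care is that the quantities are well defined. The definition of $g$ requires $S_P(|\lambda|)>0$, which holds because $a_0>0$. It also requires $W_L(|\lambda|)>0$, which holds because the pencils have nonzero identity-type blocks. Beyond this check, the argument is a direct combination of the earlier results, and I expect no genuine obstacle.
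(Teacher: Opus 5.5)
Your proposal is correct and matches the paper's proof, which simply multiplies the residual amplification bounds of Propositions~\ref{prop:half-annulus-residual-amplification}, \ref{prop:endpoint-extension}, and~\ref{prop:single-root-residual-amplification} by the recovery-ratio bounds of Theorem~\ref{thm:exact-selected-ratios}. Your extra checks on chart invariance and well-definedness are harmless but not needed as separate steps, since the cited results are already stated for the original-variable pencils.
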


\begin{proof}
Combine the residual amplification bounds in
Section~6 with the recovery norm bounds in
Theorem~7.4.
\end{proof}

Thus, at exact pencil eigenvectors, backward-error
amplification is bounded by a constant of order
$d^{5/2}$ across the entire nonzero spectrum.
The bound requires no eigenvalue localization near
the tropical roots and no invertibility of the
coefficient matrices.

These bounds apply to exact pencil eigenvectors.
Extending the recovery norm bounds to computed
eigenvectors remains a subject for future work.

\section{Numerical experiments}

We compare the proposed linearization covering with
an unscaled companion pencil and companion pencils
scaled at the tropical roots. We evaluate the
coefficientwise relative polynomial backward errors.

We consider three examples. The first is a quartic
with prescribed eigenvalue moduli between tropical
roots. The second is a random quartic with widely
varying coefficient norms and ill-conditioned
coefficient matrices. The third is the planar
waveguide problem from the NLEVP collection
\cite{BetckeEtAl2013}.
Together, these examples test the methods across
a wide range of eigenvalue moduli, including values
far from the tropical roots.


\subsection{A quartic with prescribed eigenvalue moduli}

We construct a dense quartic matrix polynomial whose
eigenvalues need not lie near its tropical roots.
Fix $R>1$ and set
\[
    (a_0,a_1,a_2,a_3,a_4)
    =(1,R^3,R^4,R^3,1).
\]
We first form a diagonal polynomial
\[
    D(\lambda)
    =\operatorname{diag}
      (p_0(\lambda),\ldots,p_{n-1}(\lambda)).
\]
The first diagonal entry is
\[
    p_0(\lambda)
    =\sum_{k=0}^{4}a_ke^{-ik\vartheta_0}\lambda^k,
\]
where $\vartheta_0$ is a random angle.
This rotation preserves the coefficient magnitudes.

For each remaining diagonal entry, we prescribe four
roots
\[
    \zeta_{\ell,m}
    =R^{\xi_{\ell,m}}e^{i\vartheta_{\ell,m}},
    \qquad m=1,\ldots,4.
\]
The angles are independent and uniform on $[-\pi,\pi]$.
We choose the exponents near $-2$, $0$, and $2$, with
independent perturbations uniform on $[-0.4,0.4]$.
Each set contains one exponent near each of these
three values; the fourth cycles among them.
Thus, most eigenvalue moduli lie inside the gaps
between tropical roots.

Write
\[
    \prod_{m=1}^{4}(\lambda-\zeta_{\ell,m})
    =\sum_{k=0}^{4}b_{\ell,k}\lambda^k
\]
and set
\[
    p_\ell(\lambda)
    =c_\ell\prod_{m=1}^{4}(\lambda-\zeta_{\ell,m}),
    \qquad
    c_\ell=\frac12
       \min_{k:\,b_{\ell,k}\neq0}
       \frac{a_k}{|b_{\ell,k}|}.
\]
Then $|c_\ell b_{\ell,k}|\leq a_k/2$.
The first diagonal entry therefore determines every
coefficient norm:
\[
    \|D_k\|_2=a_k.
\]
Finally, choose independent random orthogonal matrices
$U$ and $V$ and define
\[
    P_k=UD_kV^T.
\]
These transformations give dense coefficients while
preserving their norms and the polynomial eigenvalues
in exact arithmetic.

The four tropical roots are simple and equal to
\[
    \alpha_1=R^{-3},\qquad
    \alpha_2=R^{-1},\qquad
    \alpha_3=R,\qquad
    \alpha_4=R^3.
\]
Our covering uses six pencils, with midpoint scales
$R^{-2}$, $1$, and $R^2$.
Their assigned radial intervals are
\[
    (0,R^{-2}],\quad
    [R^{-2},R^{-1}],\quad
    [R^{-1},1],\quad
    [1,R],\quad
    [R,R^2],\quad
    [R^2,\infty).
\]

Figure~\ref{fig:prescribed-quartic} shows the right
polynomial backward errors for $n=50$ and $R=500$.
The dotted vertical lines mark the tropical roots,
and the dashed horizontal line marks unit roundoff.
In the individual tropical panels, gray points show
all recovered finite nonzero pairs, while blue points
show those retained by the tropical rank-selection rule.
All backward errors use the stored dense coefficients,
including rounding incurred when forming them.

\begin{figure}[htbp]
    \centering
    \includegraphics[width=\textwidth]{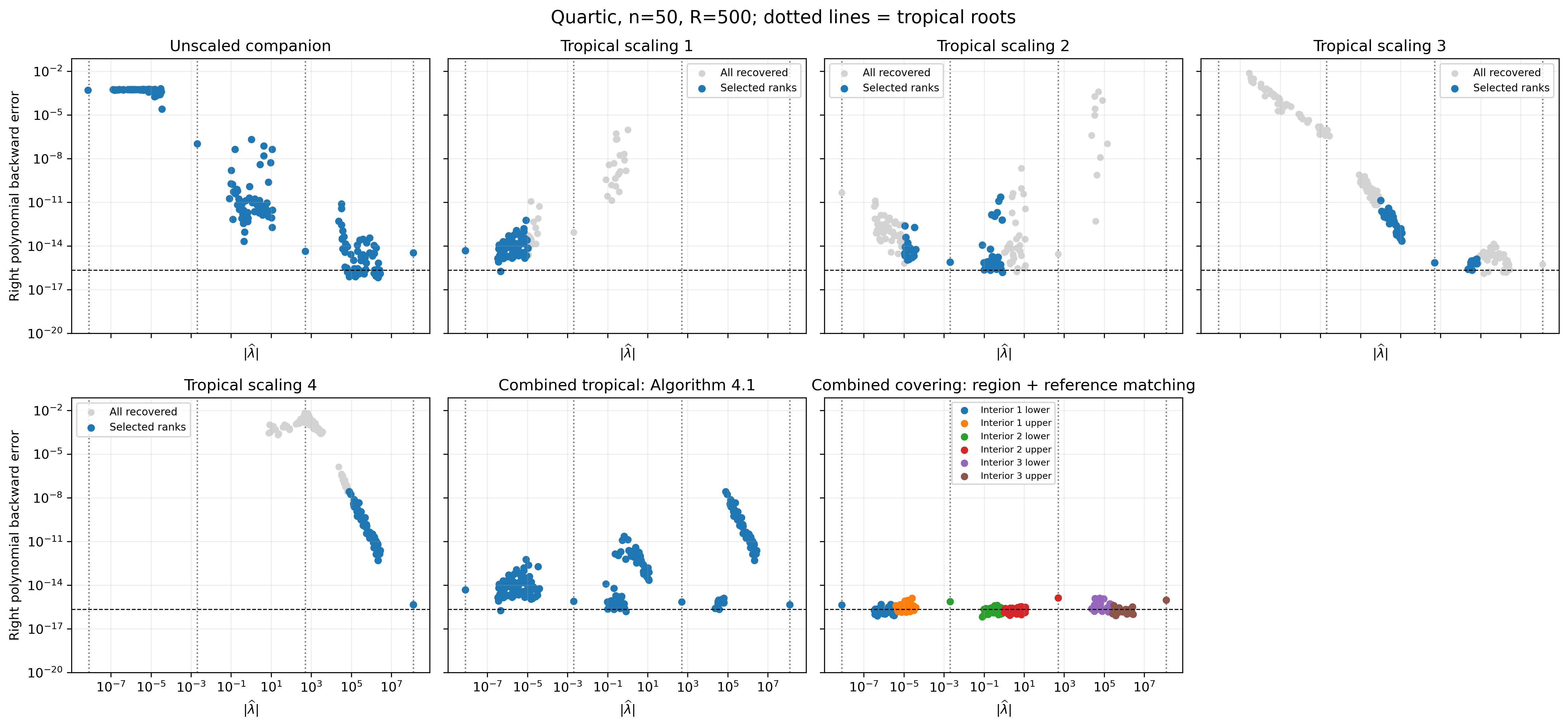}
    \caption{Right polynomial backward errors for the
    prescribed-spectrum quartic with $n=50$ and $R=500$.
    Dotted vertical lines mark the tropical roots;
    the dashed horizontal line marks unit roundoff.}
    \label{fig:prescribed-quartic}
\end{figure}

The unscaled companion gives strongly varying accuracy.
Its errors reach about $10^{-3}$ for the smallest
eigenvalues, while many larger eigenvalues have much
smaller errors.
Tropical scaling substantially improves the results,
but no individual scaling works equally well across
the spectrum. Even after rank selection, the combined
tropical errors reach about $10^{-8}$ in the
large-modulus group.

The combined covering gives more uniform accuracy.
The plotted errors remain approximately between
$10^{-16}$ and $10^{-15}$ across the recovered spectrum.
This example illustrates the benefit of assigning
linearizations to whole radial regions when eigenvalues
lie far from the tropical roots.


\subsection{A random quartic with ill-conditioned coefficients}

We next construct a random quartic
\[
    P(\lambda)=\sum_{k=0}^{4}P_k\lambda^k
\]
with prescribed coefficient norms and condition numbers.
For each $k$, choose independent random orthogonal
matrices $U_k,V_k$ and set
\[
    P_k=a_kU_k\Sigma_kV_k^T,
    \qquad
    a_k=R^{-k(k-1)/2}.
\]
For the endpoint coefficients, take
$\Sigma_0=\Sigma_4=I_n$.
For $k=1,2,3$, use
\[
    \Sigma_k
    =\operatorname{diag}
      \left(K^{-(m-1)/(n-1)}\right)_{m=1}^{n}.
\]
Thus, every coefficient is invertible,
\[
    \|P_k\|_2=a_k,
    \qquad
    \kappa_2(P_0)=\kappa_2(P_4)=1,
    \qquad
    \kappa_2(P_k)=K\quad(k=1,2,3).
\]
Unlike the preceding example, this construction does
not prescribe the polynomial eigenvalues.

The tropical roots are
\[
    1,\quad R,\quad R^2,\quad R^3,
\]
all with multiplicity one.
Our covering therefore uses six pencils, with midpoint
scales $R^{1/2}$, $R^{3/2}$, and $R^{5/2}$.
We use $n=50$, $R=10^5$, and $K=10^8$.
The coefficient norms then range from $1$ to $10^{-30}$,
and the tropical roots range from $1$ to $10^{15}$.

Figure~\ref{fig:random-quartic} compares the right
polynomial backward errors.
Red dashed lines mark the tropical roots, and black
dotted lines mark the midpoint scales.
The gray horizontal line marks unit roundoff.

\begin{figure}[htbp]
    \centering
    \includegraphics[width=\textwidth]{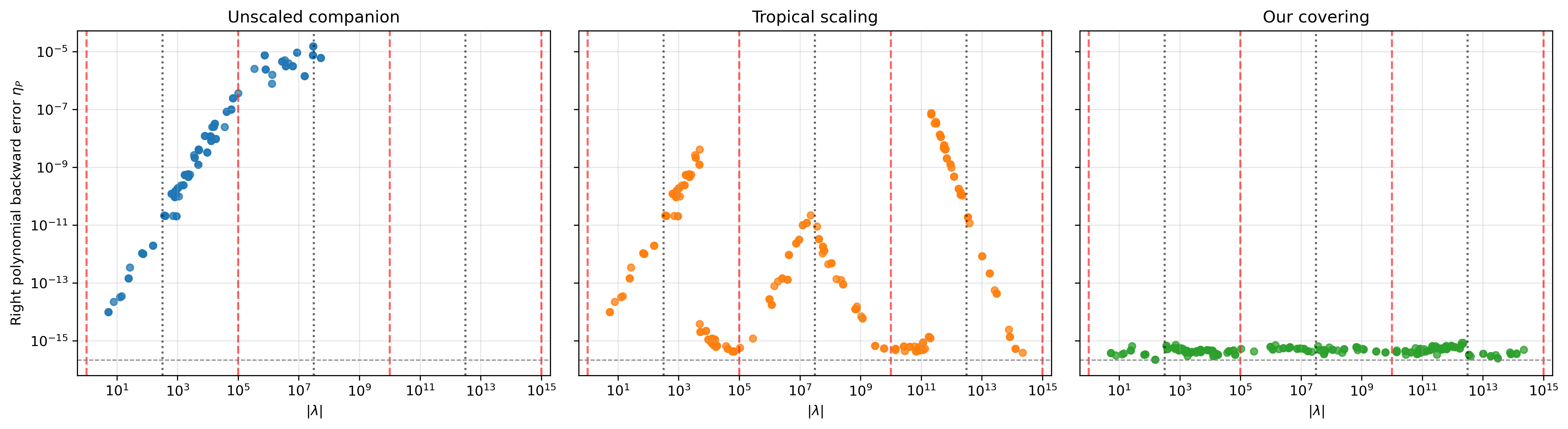}
    \caption{Right polynomial backward errors for the
    random quartic with $n=50$, $R=10^5$, and $K=10^8$.
    Red dashed lines mark tropical roots;
    black dotted lines mark midpoint scales.
    The gray horizontal line marks unit roundoff.}
    \label{fig:random-quartic}
\end{figure}

The unscaled companion returns only $102$ finite
nonzero pairs, compared with the expected $200$.
Its plotted backward errors reach about $10^{-5}$.
The tropical selection returns $200$ pairs and improves
accuracy substantially, but its errors vary across
the spectrum and reach about $10^{-7}$.
Several groups lie between tropical roots and have
larger errors than those near the roots.

The covering returns $200$ region-eligible candidates.
Their right backward errors remain of order $10^{-16}$
throughout the plotted range.
This example shows that well-separated tropical roots
alone do not ensure accurate recovery by tropical
scaling when the intervening coefficients are
ill conditioned. 
Our scaled companion linearizations, with their
assigned recovery rules, give more uniform backward
errors across the spectrum in this experiment.


\subsection{The planar waveguide problem}

We next consider the \texttt{planar\_waveguide} matrix
polynomial from the NLEVP collection.
Unlike the preceding examples, this problem comes from
an application. Its eigenvalue moduli span several
orders of magnitude, and many lie between the two
tropical roots.

Figure~\ref{fig:planar-waveguide} shows the right and
left polynomial backward errors.
The unscaled companion pencil gives right backward
errors mostly around $10^{-13}$.
Its left backward errors are smaller in the middle
of the spectrum but increase toward both ends.
Tropical scaling improves both errors near the tropical
roots. Between the roots, however, both errors rise
to several times $10^{-13}$.

The covering gives right and left backward errors
of a few times $10^{-15}$ throughout the plotted
spectrum. In particular, it removes the peak between
the tropical roots.

\begin{figure}[t]
    \centering
    \includegraphics[width=\textwidth]{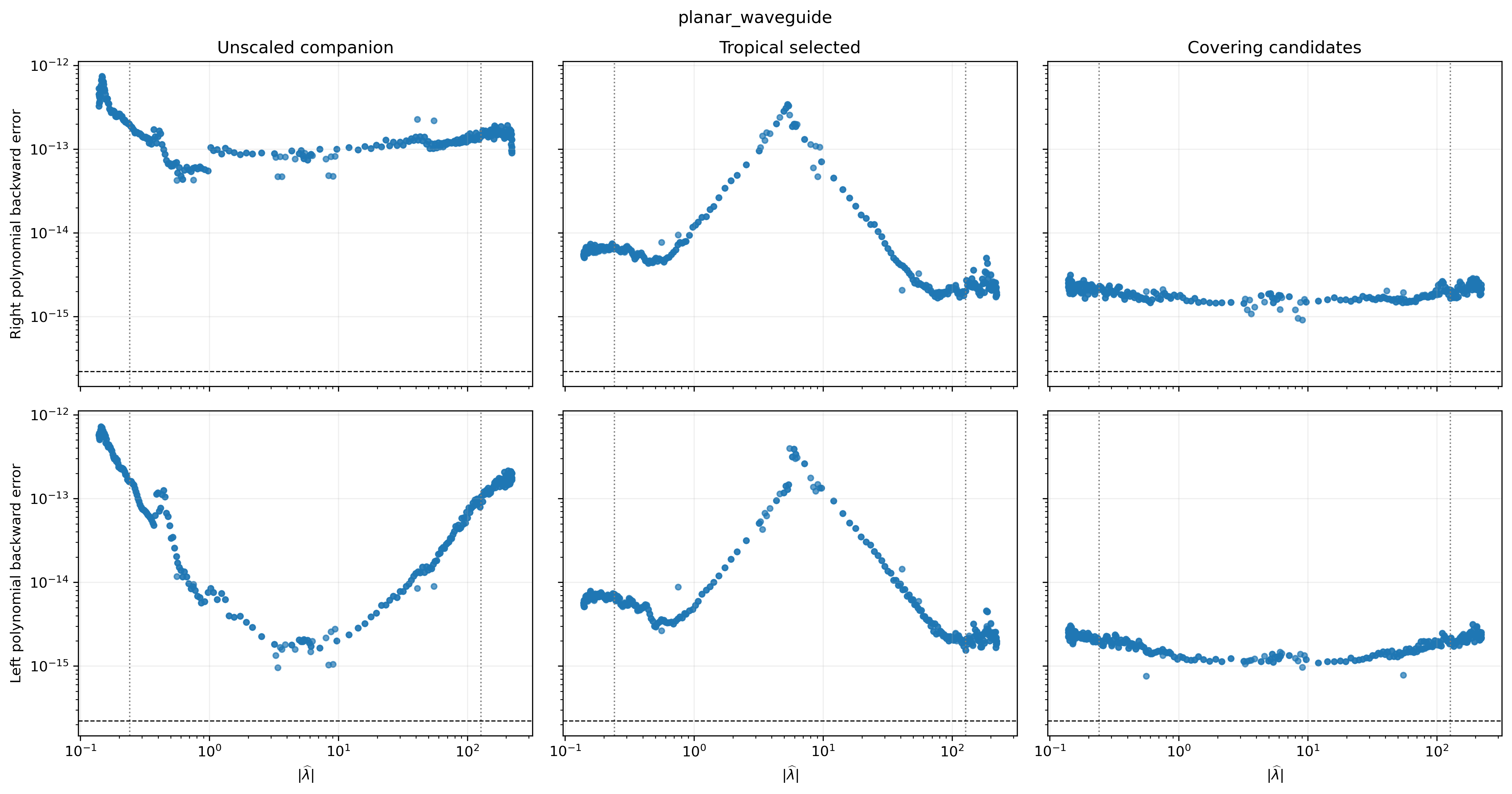}
    \caption{Right (top) and left (bottom) coefficientwise
    relative polynomial backward errors for the planar
    waveguide problem. The columns show the unscaled
    companion pencil, tropical scaling, and the proposed
    covering. Vertical dotted lines mark the tropical
    roots. The horizontal dashed line marks unit roundoff.}
    \label{fig:planar-waveguide}
\end{figure}

\bibliographystyle{siamplain}
\bibliography{references}

@article{Tisseur2000,
  author = {Tisseur, Fran{\c{c}}oise},
  title = {Backward error and condition of polynomial eigenvalue problems},
  journal = {Linear Algebra and its Applications},
  volume = {309},
  number = {1--3},
  pages = {339--361},
  year = {2000},
  doi = {10.1016/S0024-3795(99)00063-4}
}

@article{HighamLiTisseur2007,
  author = {Higham, Nicholas J. and Li, Ren-Cang and Tisseur, Fran{\c{c}}oise},
  title = {Backward error of polynomial eigenproblems solved by linearization},
  journal = {SIAM Journal on Matrix Analysis and Applications},
  volume = {29},
  number = {4},
  pages = {1218--1241},
  year = {2007},
  doi = {10.1137/060663738}
}

@article{MackeyEtAl2006,
  author = {Mackey, D. Steven and Mackey, Niloufer and Mehl, Christian and Mehrmann, Volker},
  title = {Vector spaces of linearizations for matrix polynomials},
  journal = {SIAM Journal on Matrix Analysis and Applications},
  volume = {28},
  number = {4},
  pages = {971--1004},
  year = {2006},
  doi = {10.1137/050628350}
}

@article{DopicoEtAl2018,
  author = {Dopico, Froil{\'a}n M. and Lawrence, Piers W. and P{\'e}rez, Javier and {Van Dooren}, Paul},
  title = {Block {Kronecker} linearizations of matrix polynomials and their backward errors},
  journal = {Numerische Mathematik},
  volume = {140},
  number = {2},
  pages = {373--426},
  year = {2018},
  doi = {10.1007/s00211-018-0969-z}
}

@article{MolerStewart1973,
  author = {Moler, Cleve B. and Stewart, G. W.},
  title = {An algorithm for generalized matrix eigenvalue problems},
  journal = {SIAM Journal on Numerical Analysis},
  volume = {10},
  number = {2},
  pages = {241--256},
  year = {1973},
  doi = {10.1137/0710024}
}

@incollection{GaubertSharify2009,
  author = {Gaubert, St{\'e}phane and Sharify, Meisam},
  title = {Tropical scaling of polynomial matrices},
  booktitle = {Positive Systems},
  series = {Lecture Notes in Control and Information Sciences},
  volume = {389},
  publisher = {Springer},
  pages = {291--303},
  year = {2009},
  doi = {10.1007/978-3-642-02894-6_28}
}

@article{NoferiniSharifyTisseur2015,
  author = {Noferini, Vanni and Sharify, Meisam and Tisseur, Fran{\c{c}}oise},
  title = {Tropical roots as approximations to eigenvalues of matrix polynomials},
  journal = {SIAM Journal on Matrix Analysis and Applications},
  volume = {36},
  number = {1},
  pages = {138--157},
  year = {2015},
  doi = {10.1137/14096637X}
}

@article{VanBarelTisseur2018,
  author = {{Van Barel}, Marc and Tisseur, Fran{\c{c}}oise},
  title = {Polynomial eigenvalue solver based on tropically scaled {Lagrange} linearization},
  journal = {Linear Algebra and its Applications},
  volume = {542},
  pages = {182--208},
  year = {2018},
  doi = {10.1016/j.laa.2017.04.025}
}

@article{HammarlingMunroTisseur2013,
  author = {Hammarling, Sven and Munro, Craig J. and Tisseur, Fran{\c{c}}oise},
  title = {An algorithm for the complete solution of quadratic eigenvalue problems},
  journal = {ACM Transactions on Mathematical Software},
  volume = {39},
  number = {3},
  pages = {18:1--18:19},
  year = {2013},
  doi = {10.1145/2450153.2450156}
}

@article{ZengSu2014,
  author = {Zeng, L. and Su, Y.},
  title = {A backward stable algorithm for quadratic eigenvalue problems},
  journal = {SIAM Journal on Matrix Analysis and Applications},
  volume = {35},
  number = {2},
  pages = {499--516},
  year = {2014},
  doi = {10.1137/130921234}
}

@article{DeTeranEtAl2010,
  author = {{De Ter{\'a}n}, Fernando and Dopico, Froil{\'a}n M. and Mackey, D. Steven},
  title = {{Fiedler} companion linearizations and the recovery of minimal indices},
  journal = {SIAM Journal on Matrix Analysis and Applications},
  volume = {31},
  number = {4},
  pages = {2181--2204},
  year = {2010},
  doi = {10.1137/090772927}
}

@article{NakatsukasaEtAl2017,
  author = {Nakatsukasa, Yuji and Noferini, Vanni and Townsend, Alex},
  title = {Vector spaces of linearizations for matrix polynomials: A bivariate polynomial approach},
  journal = {SIAM Journal on Matrix Analysis and Applications},
  volume = {38},
  number = {1},
  pages = {1--29},
  year = {2017},
  doi = {10.1137/15M1013286}
}

@article{BuenoEtAl2018,
  author = {Bueno, Mar{\'i}a I. and Dopico, Froil{\'a}n M. and P{\'e}rez, Javier and Saavedra, Rafael and Zykoski, Brian},
  title = {A simplified approach to {Fiedler}-like pencils via block minimal bases pencils},
  journal = {Linear Algebra and its Applications},
  volume = {547},
  pages = {45--104},
  year = {2018},
  doi = {10.1016/j.laa.2018.01.033}
}

@article{DopicoPerezVanDooren2019,
  author = {Dopico, Froil{\'a}n M. and P{\'e}rez, Javier and {Van Dooren}, Paul},
  title = {Block minimal bases {$\ell$}-ifications of matrix polynomials},
  journal = {Linear Algebra and its Applications},
  volume = {562},
  pages = {163--204},
  year = {2019},
  doi = {10.1016/j.laa.2018.10.010}
}

@article{BetckeEtAl2013,
  author  = {Betcke, Timo and Higham, Nicholas J.
             and Mehrmann, Volker and Schr{\"o}der, Christian
             and Tisseur, Fran{\c{c}}oise},
  title   = {{NLEVP}: A Collection of Nonlinear Eigenvalue Problems},
  journal = {ACM Transactions on Mathematical Software},
  volume  = {39},
  number  = {2},
  pages   = {7:1--7:28},
  year    = {2013},
  doi     = {10.1145/2427023.2427024}
}
\end{document}